\def\sqr#1#2{{\vcenter{\hrule height.#2pt
        \hbox{\vrule width.#2pt height#1pt \kern#1pt
                \vrule width.#2pt}
        \hrule height.#2pt}}}
\newcommand{\mypm}{\mathbin{\mathpalette\@mypm\relax}}
\newcommand{\@mypm}[2]{\ooalign{%
  \raisebox{.1\height}{$#1+$}\cr
  \smash{\raisebox{-.6\height}{$#1-$}}\cr}}
\def\ZZ{{\mathbb Z}}
\def\q{{\mathfrak q}}
\def\p{{\mathfrak p}}
\def\matA{{\mathsf A}}
\def\matD{{\mathsf D}}
\def\matE{{\mathsf E}}
\def\matM{{\mathsf M}}
\def\matA{{\mathsf A}}
\def\matM{{\mathsf M}}
\def\comF{{\mathbf F}}
\def\comD{{\mathbf D}}
\def\D{{\mathsf D}}
\def\E{{\mathsf E}}
\newtheorem{Theorem}{Theorem}[section]
\newtheorem*{MainTheoremA}{Theorem A}
\newtheorem*{MainTheoremB}{Theorem B}
\newtheorem*{MainTheoremC}{Theorem C}
\newtheorem*{MainTheoremD}{Theorem D}
\newtheorem*{MainTheoremE}{Theorem E}
\newtheorem*{MainTheoremF}{Theorem F}
\newtheorem{Lemma}[Theorem]{Lemma}
\newtheorem{Corollary}[Theorem]{Corollary}
\newtheorem{Proposition}[Theorem]{Proposition}
\newtheorem{Notation}[Theorem]{Notation}
\newtheorem{Assumptions and Discussion}[Theorem]{Assumptions and Discussion}
\newtheorem{Remark}[Theorem]{Remark}
\newtheorem{Example}[Theorem]{Example}
\newtheorem{Definition}[Theorem]{Definition}
\newtheorem{Construction}[Theorem]{Construction}
\newcommand{\ul}[1]{\underline{#1}}
\newcommand{\R}{\mathcal{RL}}
\def\Lra{\Longrightarrow}
\newcommand{\be}{\begin{equation*}}
\newcommand{\bee}{\begin{equation}}
\newcommand{\ee}{\end{equation*}}
\newcommand{\eee}{\end{equation}}
\newcommand{\Arg}{\rule{1ex}{1pt}}
\newcommand{\reg}{\operatorname {reg}}
\newcommand{\codim}{\operatorname {codim}}
\newcommand{\Sing}{\operatorname {Sing}}
\newcommand{\Frac}{\operatorname {Frac}}
\newcommand{\Proj}{\operatorname {Proj}}
\newcommand{\Min}{\operatorname {Min}}
\newcommand{\Ass}{\operatorname {Ass}}
\newcommand{\ass}{\operatorname {Ass}}
\newcommand{\ext}{\operatorname {Ext}}
\newcommand{\Jac}{\operatorname {\mathsf{Jac}}}
\newcommand{\depth}{\operatorname {depth}}
\newcommand{\pd}{\operatorname {pd}}
\newcommand{\h}{\operatorname {ht}}
\newcommand{\im}{\operatorname {Im}}
\renewcommand{\ker}{\operatorname {Ker}}
\newcommand{\Spec}{\operatorname {Spec}}
\newcommand{\rk}{\operatorname {rank}}
\newcommand{\straightening}{standardization}
\newcommand{\straighteningsp}{standardizations }
\newcommand{\straightensp}{standardization }
\newcommand{\RLP}{\mathcal{RLP}}
\newcommand{\homog}{\textrm{std}}
\newcommand{\shomog}{\textrm{std}}
\newcommand{\In}{\textrm{in}}
\definecolor{lighterorange}{cmyk}{0,0.42,0.66,0.0}
\colorlet{darkgreen}{green!50!black}
\colorlet{darkblue}{blue!50!black}
\begin{document}

\subjclass[2010]{13D02,14B05}

\title{Singularities of Rees-like algebras} 
\author[P. Mantero]{Paolo Mantero}
\address{University of Arkansas, Department of Mathematical Sciences, Fayetteville,
AR 72701}
\email{pmantero@uark.edu}
\author[J. McCullough]{Jason McCullough}
\address{Iowa State University, Department of Mathematics, Ames, IA 50011}
\email{jmccullo@iastate.edu}
\author[L. E. Miller]{Lance Edward Miller}
\address{University of Arkansas, Department of Mathematical Sciences, Fayetteville,
AR 72701}
\email{lem016@uark.edu}

\begin{abstract}
Recently, Peeva and the second author constructed irreducible projective varieties with regularity much larger than their degree, yielding counterexamples to the Eisenbud-Goto Conjecture.  Their construction involved two new ideas: Rees-like algebras and step-by-step homogenization. Yet, all of these varieties are singular and the nature of the geometry of these projective varieties was left open. The purpose of this paper is to study the singularities inherent in this process.  We compute the codimension of the singular locus of an arbitrary Rees-like algebra over a polynomial ring. We then show that the relative size of the singular locus can increase under step-by-step homogenization. To address this defect, we construct a new process, we call prime standardization, which plays a similar role as step-by-step homogenization but also preserves the codimension of the singular locus. This is derived from ideas of Ananyan and Hochster and we use this to study the regularity of certain smooth hyperplane sections of Rees-like algebras, showing that they all satisfy the Eisenbud-Goto Conjecture, as expected. On a more qualitative note, while Rees-like algebras are almost never Cohen-Macaulay and  never normal, we characterize when they are seminormal, weakly normal, and, in positive characteristic, F-split.  Finally, we construct a finite free resolution of the canonical module of a Rees-like Algebra over the presenting polynomial ring showing that it is always Cohen-Macaulay and has a surprising self-dual structure.
\end{abstract}
\maketitle

\section{Introduction}

Given a nondegenerate, embedded projective variety $X$ over an algebraically closed field $k$ corresponding to a homogeneous prime ideal $P \subseteq k[x_0,\ldots,x_n]$, the Eisenbud-Goto conjecture predicts an estimate on the Castelnuovo-Mumford regularity of $X$:
\begin{equation}\label{eq:CMreg}\reg X \leq \deg X - \codim X + 1,\end{equation}
\[ \text{or equivalently}\]
\[ \reg(S/P) \le \deg(S/P) - \mathrm{ht}(P).\]  Equation~\eqref{eq:CMreg} fails for arbitrary schemes, that is, when $P$ is not prime. A surprising  construction introduced by the second author and Peeva \cite{MP} produced the first examples of projective varieties failing this bound by producing from a known embedded scheme with large regularity, a new projective variety embedded in a much larger space which also has large regularity. This reinforces the need to control the singularities of $X$ to ensure optimal estimates for its regularity; in particular, the Eisenbud-Goto conjecture remains open for arbitrary smooth projective varieties or even some mildly singular varieties.  There are many cases where the conjecture  does hold including the case of curves \cite{GLP} and smooth surfaces in characteristic $0$ \cite{P, Laz}, and certain $3$-folds in characteristic $0$ \cite{Ran}. See also related work of Kwak-Park \cite{KP} and Noma \cite{Noma}. There are also mild classes of singular surfaces for which Equation~\eqref{eq:CMreg} holds, see \cite{Niu}. %

\

The process in \cite{MP} of constructing the examples of projective varieties failing Equation~(\ref{eq:CMreg}) involves two major steps. The first step is the construction of the Rees-like algebra, which defines a subvariety of a weighted projective space.  Specifically,  given a homogeneous ideal $I$ in a polynomial ring $S$ over a field $k$, the Rees-like algebra of $I$ is the non-standard graded $k$-algebra $\R(I) \coloneqq S[It, t^2] \subseteq S[t]$.

The second step, which applies to any homogeneous ideal in a non-standard graded polynomial ring, produces an associated ideal in a much larger polynomial ring called its step-by-step homogenization.  Unlike the usual homogenization of an ideal which defines the projective closure of an affine variety, the step-by-step homogenization produces a much larger variety; however, it preserves graded Betti numbers and primeness for nondegenerate primes, making it sufficient to produce the counterexamples to Equation~\eqref{eq:CMreg}.  
\

Thus far, explicit understanding of the geometry of the processes involved in both of these two steps is lacking. It was proved in \cite{MP} that Rees-like algebras are not Cohen-Macaulay but further structure of their singularities is not known. Moreover, the step-by-step homogenization used in \cite{MP} can increase the relative size of the singular locus. The goal of this paper is to better understand the behavior of the singularities and the size of the singular locus after taking each of these two steps. First, we compute the Jacobian of the Rees-like algebra explicitly leading to a complete description of the reduced subscheme structure of the singular locus.

\begin{MainTheoremA}$($Theorem~\ref{thm:MinPrimeJacobian}$)$
Suppose $I$ is a homogeneous ideal in a polynomial ring over a perfect field $k$ with ${\rm char}(k)=p\gg 0$ or ${\rm char}(k)=0$. Set $X = \Proj \R(I)$. Then there is a bijection between the irreducible components of the singular locus and those of the scheme defined by $I$. Moreover,  its codimension in $X$ is 
$$
\codim_X (\Sing X)=  \h(I).
$$
\end{MainTheoremA}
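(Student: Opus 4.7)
The plan is to use the Jacobian criterion applied to the standard presentation of the Rees-like algebra. Write $I = (f_1, \ldots, f_m)$ and $T = S[y_1, \ldots, y_m, z]$, with the surjection $T \twoheadrightarrow \R(I)$ sending $y_i \mapsto f_i t$ and $z \mapsto t^2$. The kernel $J$ is known to be generated by the Koszul-like relations $f_i y_j - f_j y_i$ for $i < j$ together with the quadratic relations $y_i y_j - f_i f_j z$ for $i \le j$. Since $\dim \R(I) = \dim S[t] = n+1$, the height of $J$ equals $m$. The hypotheses on $k$ (perfect, ${\rm char}(k) = 0$ or $\gg 0$) guarantee that the Jacobian criterion is valid, so $\Sing X$ is the closed subscheme of $X$ defined by the image $\Jac$ in $\R(I)$ of the ideal of $m\times m$ minors of the Jacobian matrix of the listed generators of $J$.

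The heart of the argument is to show $\sqrt{\Jac} = \sqrt{I \R(I)}$. For the inclusion $V(\Jac) \subseteq V(I\R(I))$, I would exhibit an explicit nonvanishing minor on the complement: given a prime $\p$ of $\R(I)$ with $I \not\subseteq \p$, choose $k$ with $f_k \notin \p$ and consider the $m \times m$ submatrix of the Jacobian with rows given by the $m-1$ Koszul relations $f_k y_j - f_j y_k$ (for $j \ne k$) together with the quadratic relation $y_k^2 - f_k^2 z$, and columns indexed by $\{y_j : j \ne k\} \cup \{z\}$. A direct calculation shows this submatrix is diagonal with determinant $\pm f_k^{m+1}$, which is a unit at $\p$. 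Conversely, inspecting each generator of $J$ shows every entry of the full Jacobian lies in $(I, y_1, \ldots, y_m)T$: for the Koszul relations this is immediate, while for $y_iy_j - f_if_jz$ the $\partial / \partial y_l$ are linear in the $y$'s and the $\partial / \partial z, \partial / \partial x_l$ lie in $I$. Hence every $m \times m$ minor lies in $(I, y_1, \ldots, y_m)^m T$, giving the opposite inclusion.

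To extract components and codimension, note that $y_i^2 = f_i^2 z$ in $\R(I)$, so the $y_i$'s are nilpotent modulo $I\R(I)$, whence $\sqrt{I\R(I)} = (I, y_1, \ldots, y_m)\R(I)$. Therefore $\Sing X$ coincides as a set with $V((I, y_1, \ldots, y_m)\R(I))$, and using the defining relations of $J$ one checks $\R(I)/(I, y_1, \ldots, y_m)\R(I) \cong (S/I)[z]$, a polynomial ring over $S/I$. Its minimal primes are precisely $(\p, y_1, \ldots, y_m)\R(I)$ for $\p \in \Min(I)$, yielding the bijection with irreducible components of the scheme defined by $I$. Finally, since all ideals involved are homogeneous, the codimension in $X = \Proj \R(I)$ equals the codimension in $\Spec \R(I)$, and the height in $\R(I)$ of $(\p, y_1, \ldots, y_m)\R(I)$ equals $\dim \R(I) - \dim (S/\p)[z] = (n+1) - (n - \h(\p) + 1) = \h(\p)$; minimizing over $\p \in \Min(I)$ gives $\codim_X(\Sing X) = \h(I)$.

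The main obstacle is locating the right $m \times m$ submatrix in the smooth-locus step: most ``symmetric'' choices produce minors that are nontrivial polynomials in the $f_i$, and it is critical to exploit the asymmetry provided by a single diagonal quadratic $y_k^2 = f_k^2 z$ paired with the Koszul relations based at a fixed index $k$. Verifying the blanket membership of Jacobian entries in $(I, y_1, \ldots, y_m)T$ is conceptually clean but requires handling all types of partial derivatives uniformly. The characteristic assumption enters only to invoke the Jacobian criterion — in small characteristic, partial derivatives of high-degree $f_i$ could vanish artificially and spuriously enlarge $V(\Jac)$.
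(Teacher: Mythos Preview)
Your overall strategy---compute $\sqrt{\Jac}=\sqrt{I\R(I)}$ via the Jacobian criterion and then read off components and codimension from $\R(I)/(I,\underline y)\cong (S/I)[z]$---is exactly right, and your explicit minor $\pm f_k^{m+1}$ for the smoothness direction is clean and more direct than the paper's argument for that inclusion. But there is a genuine gap in the ``conversely'' step.

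The claimed generating set for the Rees-like prime is wrong. The kernel is generated by the quadratic relations $y_iy_j-zf_if_j$ together with \emph{all} syzygy relations $\sum_i c_{ij}y_i$ with $\sum_i c_{ij}f_i=0$, not just the Koszul relations $f_iy_j-f_jy_i$. When $I$ has non-Koszul minimal syzygies (e.g.\ $I=I_2$ of a generic $2\times 3$ matrix, where $x_{11}y_1-x_{12}y_2+x_{13}y_3$ lies in the kernel), a degree count in the grading $\deg y_i=\deg f_i+1$, $\deg z=2$ shows such a linear syzygy relation cannot be generated by your elements. Consequently, your assertion that ``every entry of the full Jacobian lies in $(I,y_1,\ldots,y_m)T$'' fails: for a general syzygy relation $r_j=\sum_i c_{ij}y_i$ one has $\partial r_j/\partial y_k=c_{kj}$, which need not lie in $I$.

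The fix is short and is what the paper does. Any $m\times m$ minor of the full Jacobian either uses a column from the quadratic block or a row from $\partial_{x_\ell},\partial_z$---and such minors do lie in $(I,\underline y)$ by your own entry check---or it uses only the $\partial_{y_k}$-rows and only syzygy columns, in which case it is an $m\times m$ minor of the $m\times b$ syzygy matrix $(c_{ij})$. Since the first syzygy module of $I$ has rank $m-1$ over the domain $S$, all such minors vanish (equivalently, $I_m(c_{ij})\subseteq\sqrt{I}$ by Buchsbaum--Eisenbud). With this correction, your argument goes through unchanged; the final identification of components and the codimension computation are fine as written.
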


\

Unfortunately, step-by-step homogenization does not preserve the relative size of the singular locus, see Example~\ref{xmp:CodimDropsStepbyStep}.  We introduce a new notion called  {\it prime \straightening}, based on the idea of prime sequences introduced by Ananyan and Hochster in \cite{AH}.  We show that the codimension of the singular locus of an arbitrary variety is preserved after applying  a certain prime standardization.

\begin{MainTheoremB} $($Corollary~\ref{RLhomog}$)$ Suppose $I$ is a homogeneous ideal in a polynomial ring over a perfect field $k$ with ${\rm char}(k)=p\gg 0$ or ${\rm char}(k)=0$.  Set $X = \Proj \R(I)$. There is a prime standardization of the defining prime ideal of the Rees-like algebra $\R(I)$ such that the irreducible components of the singular locus of the resulting variety $Y$ and those of the scheme defined by $I$ are in bijection. Moreover, its codimension in $Y$ is 
$$
\codim_Y (\Sing Y)=  \h(I).
$$
\end{MainTheoremB}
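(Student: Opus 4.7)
My plan is to deduce this corollary from Theorem~A together with the general preservation result for prime standardization established earlier in the paper. Theorem~A already identifies the irreducible components of $\Sing X$ with those of $V(I) \subseteq \Spec S$ and computes $\codim_X(\Sing X) = \h(I)$ for the weighted variety $X = \Proj \R(I)$. What remains is to verify that these two conclusions persist after one passes from $X$ to a variety $Y$ sitting in standard projective space, which is precisely what prime standardization provides.

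First I would set up the standardization concretely. Let $P$ denote the defining prime of $\R(I)$ in its natural non-standard polynomial presentation. A prime standardization replaces the non-linear generators of that presentation by new linear variables while simultaneously adjoining a prime sequence (in the sense of Ananyan--Hochster) chosen to remain prime modulo $P$. This yields a prime ideal $Q$ in a larger standard-graded polynomial ring $R$, and $Y = \Proj R/Q$ is the desired projective variety.

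The main step is then to invoke the general prime standardization theorem. Since the adjoined prime sequence is a regular sequence both on $R$ and on $R/Q$, the height of $Q$ equals the height of $P$ plus the length of the sequence, while the dimension of the ambient ring grows by the same amount, so codimensions are preserved. For the singular locus, the Jacobian ideal of $Q$ decomposes, up to the new linear variables, into the Jacobian ideal of $P$ together with entries controlled by the derivatives of the prime sequence; the genericity built into prime standardization then guarantees that $\Sing Y$ pulls back to $\Sing X$ with a bijection on irreducible components and matching codimensions. Combined with Theorem~A this gives both the bijection with the irreducible components of $V(I)$ and the equality $\codim_Y(\Sing Y) = \h(I)$.

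The hard part, and the reason for introducing the notion of prime standardization in the first place, is guaranteeing that a suitable prime sequence exists with these Jacobian-preserving properties: one must simultaneously avoid the minimal primes of the Jacobian of $P$ and maintain primeness at each step. The Ananyan--Hochster construction supplies this over a perfect field in characteristic zero or sufficiently large positive characteristic, which is precisely why those hypotheses appear in the statement. Once this existence is in hand, the corollary follows by assembling the pieces above. I expect that the verification that one can always choose the new linear variables sufficiently generically, as opposed to the naive step-by-step homogenization which fails this property (cf.\ Example~\ref{xmp:CodimDropsStepbyStep}), is the technical heart of the argument.
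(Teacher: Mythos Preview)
Your high-level plan---combine Theorem~A with a preservation theorem for prime standardization---matches the paper exactly: Corollary~\ref{RLhomog} is simply Theorem~\ref{thm:MinPrimeJacobian} applied together with Theorem~\ref{LJP}. However, your account of the mechanism behind the preservation step contains real misconceptions that would keep you from writing a correct proof.

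First, a prime standardization is a flat graded ring map $T \to T^\homog$ sending each variable $t_i$ to an element $F_i$ of a homogeneous prime sequence in a standard-graded polynomial ring; it is not ``adjoining a prime sequence chosen to remain prime modulo $P$'', and there is no requirement that anything be regular on $R/Q$. Height and primeness are preserved directly by Proposition~\ref{Q^h} (which rests on flatness and the prime-sequence property), not by a regular-sequence count of the sort you describe.

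More seriously, the preservation of the singular locus is \emph{not} achieved by genericity or by choosing the sequence to avoid the minimal primes of $\Jac(P)$. The paper uses one fixed, explicit construction (Construction~\ref{primeH}): each variable $t_i$ of degree $d_i$ is sent to $F_i = \sum_{j=0}^n \prod_{\ell=1}^{d_i} w_{i,j,\ell}$, a sum of $n+1$ monomials in disjoint variable sets. The decisive property is Lemma~\ref{partials}: the ideal generated by the partials $\partial F_i/\partial w_{i,j,\ell}$ has height at least $n+1$, strictly larger than $\dim T$. The chain-rule computation in the proof of Theorem~\ref{LJP} then yields
\[
I_c(\Jac(I^\homog)) \;=\; I_c(\matD^\homog) \;+\; \bigl(\partial_{w_{j,\ell}}(F)\bigr)\cdot I_c\bigl((\Jac I)^\homog\bigr),
\]
so a minimal prime $\q$ of the new Jacobian ideal either contains all the partials---forcing $\h(\q) > \dim T$, hence irrelevant for the bijection on low-height components---or contains $(I_c(\Jac I))^\homog$, whose minimal primes are exactly the $\p_i^\homog$ by Proposition~\ref{Q^h}(4). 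This height dichotomy is the engine of the argument; it is a feature of the specific $F_i$ chosen, not a consequence of generic choice, and a bare prime-avoidance argument would not supply it.
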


In Section~\ref{canonical}, we also give a complete description of the canonical module. In particular, we give an explicit presentation of $\omega_{\R(I)}$ via linkage theory by fully describing the minimal free resolution of $\omega_{\R(I)}$, including explicit differential maps.  We show that the canonical module has a surprising self-dual resolution. Moreover, we show that, even though the Rees-like algebra is not Cohen-Macaulay when $I$ is not principal, its canonical module is Cohen-Macaulay.

\begin{MainTheoremC} $($Theorem~\ref{canonicalModule}$)$
Suppose $k$ is a field with $\mathrm{char}(k) \neq 2$, and $S$ is a polynomial algebra over $k$.  Let $I$ an ideal with $\h(I) \ge 2$.  Then we can provide an explicit presentation matrix for the canonical module $\omega_{\R(I)}$ of the Rees-like algebra $\R(I)$, and an explicit description of its minimal free resolution. 

In particular, $\R(I)$ is a small Cohen-Macaulay module.
\end{MainTheoremC}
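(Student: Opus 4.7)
The plan is to identify $\omega_{\R(I)}$ with a concrete small Cohen--Macaulay $P$-module via a short-exact-sequence / long-exact-$\ext$ argument over the presenting polynomial ring $P=S[y_1,\ldots,y_m,z]$. Writing $I=(f_1,\ldots,f_m)$, the Rees-like structure map $P\to S[t]$ given by $y_i\mapsto f_it$ and $z\mapsto t^2$ has image $\R(I)$ and yields a short exact sequence of $P$-modules
$$0\to\R(I)\to S[t]\to C\to 0,$$
where $C\cong(S/I)[z](-1)$ (each $y_i$ acts on $C$ as $f_it\equiv 0$, while $z$ acts as $t^2$). Two numerical observations drive the argument. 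First, $S[t]$ is a Cohen--Macaulay $P$-module of codimension $m$: the extended ideal $\m_P\cdot S[t]=(x_0,\ldots,x_n,t^2)$ is $\m_{S[t]}$-primary, so $\depth_P(S[t])=\dim S[t]=n+2=\dim\R(I)$. Second, $\operatorname{ann}_P(C)=(y_1,\ldots,y_m)+I$ has grade $m+\h(I)\geq m+2$ by hypothesis, hence $\ext^m_P(C,P)=\ext^{m+1}_P(C,P)=0$.

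Applying $\operatorname{Hom}_P(-,P)$ to the sequence and using these vanishings, the long exact $\ext$-sequence collapses to an isomorphism of $P$-modules, up to a graded twist coming from the non-standard grading,
$$\omega_{\R(I)}\;\cong\;\ext^m_P(\R(I),P)\;\cong\;\ext^m_P(S[t],P)\;\cong\;\omega_{S[t]}\;\cong\;S[t].$$
Thus, after a shift, $\omega_{\R(I)}$ is isomorphic to $S[t]$ as an $\R(I)$-module. From this identification: (i) $\depth_P(\omega_{\R(I)})=\dim\R(I)$, giving the small Cohen--Macaulay property; (ii) $\omega_{\R(I)}$ is $P$-perfect of grade $m$, so it admits a minimal free $P$-resolution $F_\bullet$ of length exactly $m$; and (iii) since its own canonical dual satisfies $\omega_{\omega_{\R(I)}}\cong S[t]\cong\omega_{\R(I)}$ (up to shift), the resolution $F_\bullet$ must be self-dual, i.e.\ $F_i\cong\operatorname{Hom}_P(F_{m-i},P)$ up to twist, which is the ``surprising'' self-dual structure.

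To produce the explicit presentation matrix and differentials, one takes $\{1,t\}$ as $P$-generators of $S[t]$ and computes first syzygies: these consist of the defining ideal $Q$ of $\R(I)$ acting on each generator, together with mixing relations encoding $\R(I)\cap t\R(I)=I\cdot tS[t]$. The full resolution is then assembled via a mapping cone from the minimal free resolution of $I$ over $S$ together with the Koszul complex on $(y_1-f_1t,\ldots,y_m-f_mt,\,z-t^2)$ in the auxiliary ring $P[t]$; the higher syzygies are forced by the self-duality. The main obstacle is exhibiting the self-dual isomorphism at the matrix level and tracking the non-standard grading shifts cleanly; linkage theory provides the right organizing principle, by realizing $\omega_{\R(I)}$ concretely as a colon ideal in $\R(I)$ so that the full resolution and its self-duality emerge from residual-intersection duality.
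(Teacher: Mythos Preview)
Your identification $\omega_{\R(I)}\cong S[t]$ via the short exact sequence $0\to\R(I)\to S[t]\to (S/I)[z]\to 0$ and the long exact sequence in $\ext^{\bullet}_P(-,P)$ is correct and considerably more direct than the paper's route. The paper instead argues through linkage: it fixes the complete intersection $C=(y_1^2-zf_1^2,\ldots,y_m^2-zf_m^2)\subseteq Q$, establishes the primary decomposition $C=\bigcap\RLP(f_1,\pm f_2,\ldots,\pm f_m)$, produces explicit elements $g_m^{\mathrm{even}},g_m^{\mathrm{odd}}$, proves (via a Gr\"obner basis computation and a delicate unmixedness argument) that $C:Q=C+(g_m^{\mathrm{even}},g_m^{\mathrm{odd}})$, and only then invokes $\omega_{\R(I)}\cong (C:Q)/C$. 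Your argument bypasses all of this and immediately delivers the Cohen--Macaulay property, the type-$2$ statement, and the presentation matrix: the first syzygies on the generators $1,t$ of $S[t]$ over $P$ are exactly $(y_i,-f_i)$ and $(-zf_i,y_i)$, which up to signs is the paper's matrix $\matM$. The abstract self-duality also drops out as you say, since $\ext^m_P(S[t],P)\cong\omega_{S[t]}\cong S[t]$ and minimal resolutions of perfect modules are unique. What the paper's approach buys in exchange for the extra work is the explicit linkage data (the elements $g_m^{\mathrm{even}},g_m^{\mathrm{odd}}$ and the primary decomposition of $C$), which is of independent interest.

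Where your proposal is off is the construction of the full resolution. The ``mapping cone from the minimal free resolution of $I$ over $S$'' is a red herring: the higher $S$-syzygies of $I$ play no role, and the paper's differentials involve only the Koszul maps on $f_1,\ldots,f_m$ and on $y_1,\ldots,y_m$. Your Koszul-complex idea is the right seed, but it must be run over $\widetilde P:=P[t]/(t^2-z)$ rather than over $P[t]$. Since $\widetilde P$ is free of rank~$2$ over $P$ and $y_1-f_1t,\ldots,y_m-f_mt$ is a regular sequence in the Cohen--Macaulay ring $\widetilde P$ with quotient $S[t]$, the Koszul complex $K_\bullet(y_1-f_1t,\ldots,y_m-f_mt;\widetilde P)$ is a length-$m$ complex of free $P$-modules resolving $S[t]\cong\omega_{\R(I)}$. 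Writing multiplication by $y_i-f_it$ on $\widetilde P\cong P^2$ as a $2\times 2$ matrix over $P$ recovers precisely the paper's block differentials
\[
d_i^{\mathbf D}=\begin{pmatrix}d_i^{\underline y}&d_i^{\underline f}\\ z\,d_i^{\underline f}&d_i^{\underline y}\end{pmatrix}.
\]
The paper verifies acyclicity of $\mathbf D_\bullet$ by an ad hoc argument (reduce modulo $z$ and peel off two copies of $K_\bullet(\underline y)$); the regular-sequence argument in $\widetilde P$ is a cleaner way to see the same thing.
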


The interested reader may want to consult the statement and proof of Theorem~\ref{canonicalModule} for the precise statement.

\

While the varieties produced in \cite{MP} are highly singular, it is natural to consider the possibility of smooth hyperplane sections of those varieties.  Using the above results and working over characteristic $0$ fields, we exploit Bertini style arguments to show that the resulting smooth varieties satisfy Equation~\eqref{eq:CMreg}.  More precisely, we prove the following:

\begin{MainTheoremD} $($Theorem~\ref{smooth}$)$
If $X$ is an embedded projective scheme defined by a saturated ideal $I$, then there is a regular sequence of general hyperplane sections of a prime \straightensp of the Rees-like algebra of $I$ which is smooth if and only if $X$ is arithmetically Cohen-Macaulay.  Moreover, all such varieties satisfy Equation~\eqref{eq:CMreg}.
\end{MainTheoremD}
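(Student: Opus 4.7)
The plan is to combine Theorem~B with a Bertini-style argument and a depth computation for the Rees-like algebra. By Theorem~B, $\dim \Sing(Y) = \dim Y - \h(I)$. A general hyperplane in the standard-graded polynomial ring ambient to $Y$ meets each irreducible component of $\Sing(Y)$ in codimension one and, by Bertini, meets the smooth locus of $Y$ transversally; so taking $r$ general hyperplane sections yields a smooth variety once $r \geq \dim Y - \h(I) + 1$. For $\ell_1, \dots, \ell_r$ to form a regular sequence on the coordinate ring $R/\mathcal{P}$ of $Y$ we need $r \leq \depth(R/\mathcal{P})$.

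To pin down the ``iff arithmetically Cohen-Macaulay'' condition, I would compute $\depth \R(I)$ using the $S[t^2]$-module splitting $\R(I) = S[t^2] \oplus It\cdot S[t^2]$: the depth lemma applied to $0 \to I \to S \to S/I \to 0$ yields $\depth_S I = \depth(S/I) + 1$, and hence $\depth \R(I) = \depth(S/I) + 2$. Since prime standardization (like step-by-step homogenization) is engineered to preserve the depth defect $\dim - \depth$, the same formula transfers to $R/\mathcal{P}$. Requiring not only $r \leq \depth(R/\mathcal{P})$ but the strict inequality $r \leq \depth(R/\mathcal{P}) - 1$ (so that the cut scheme has positive depth, no embedded component at the irrelevant ideal, and is therefore genuinely a projective variety equal to its own saturation) translates precisely into $\depth(S/I) = \dim(S/I)$, i.e., $X$ is arithmetically Cohen-Macaulay. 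Conversely, when $X$ is aCM this bound is satisfied with room to spare, so general linear forms of the required length cut $Y$ down to a smooth subvariety.

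For the Eisenbud-Goto part: a regular sequence of general linear forms preserves $\reg$, $\deg$, and $\codim$ (the last when measured in the cut ambient projective space), so the smooth cut $V'$ satisfies \eqref{eq:CMreg} iff $Y$ does. To verify \eqref{eq:CMreg} for $Y$, I would use the explicit self-dual resolution of $\omega_{\R(I)}$ from Theorem~C to compute $\reg \R(I)$ sharply, together with the standard multiplicity formula for the Rees-like algebra and the bookkeeping of how $\deg$, $\reg$, and $\codim$ transform under prime standardization.

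The hardest step is obtaining the sharp upper bound $\reg Y \leq \deg Y - \codim Y + 1$ when $S/I$ is Cohen-Macaulay. While the Rees-like construction inflates $\reg$ in the general case, in the aCM setting the canonical module computation of Theorem~C produces a tight regularity bound that can be matched against $\deg Y$ and $\codim Y$ via the invariance properties of prime standardization, completing the argument.
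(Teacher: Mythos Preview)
Your argument for the equivalence ``a smooth general hyperplane section exists $\Leftrightarrow$ $S/I$ is Cohen--Macaulay'' is essentially the paper's: Bertini plus the comparison of $\depth(R/\mathcal{P})-1$ against $\dim(\Sing Y)$, fed by the depth formula $\depth \R(I) = \depth(S/I) + 2$ and the codimension formula from Theorem~B. The paper phrases the numerics via $\pd$ and Auslander--Buchsbaum rather than directly via depth, but the content is the same.

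The Eisenbud--Goto half of your plan has a genuine gap. You propose extracting $\reg \R(I)$ from the canonical-module resolution of Theorem~C, but that resolution does not control $\reg(T/\RLP(I))$: the Rees-like algebra is not Cohen--Macaulay, so local duality does not translate the Betti numbers of $\omega_{\R(I)}$ into those of $T/\RLP(I)$. In any case the exact formula $\reg(T/\RLP(I)) = \reg(S/I) + 2 + \sum_i d_i$ is already recorded in Theorem~\ref{RLbasics}, so nothing about $\omega$ is needed there; the missing ingredient is a bound on $\reg(S/I)$ itself under the Cohen--Macaulay hypothesis, which Theorem~C says nothing about. The paper supplies this via the classical inequality $\reg(S/I) \le \sum_i (d_i - 1)$ for Cohen--Macaulay ideals (Lemma~\ref{CMreg}), and then closes the chain with the elementary numerical estimate $\sum_i d_i \le \prod_i (d_i + 1) - m$ (Lemma~\ref{ineq}), matched against the degree formula $\deg = 2\prod_i(d_i+1)$ and height $m$ from Theorem~\ref{RLbasics}. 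Theorem~C plays no role in this part of the argument.
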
 

In Section~\ref{sec:Seminormal}, we turn out attention to a more qualitative study of the singularities. Namely, we address weak normality and seminormality of Rees-like algebras. In contrast to the case of Rees algebras, the characterization is surprisingly simple. 

\begin{MainTheoremE} $($Corollary~\ref{cor:criteriaseminormal}$)$
Suppose $k$ is a field with $\mathrm{char}(k) \neq 2$ and $S$ is a polynomial ring over $k$. For an homogeneous $S$-ideal $I$, $I$ is radical if and only if its Rees-like algebra $\R(I)$ is seminormal if and only if $\R(I)$ is weakly normal. 
\end{MainTheoremE}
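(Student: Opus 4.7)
The plan is to identify the normalization of $\R(I)$ explicitly and then parametrize membership in $\R(I)$ via an ``odd part'' criterion, reducing seminormality and weak normality of $\R(I)$ to radicality of $I$. The first step is to verify $\overline{\R(I)} = S[t]$ inside the common fraction field $\Frac(S)(t)$: the element $t$ satisfies the monic relation $T^2 - t^2 = 0$ over $\R(I)$ (so $S[t]$ is integral over $\R(I)$), while $S[t]$ is a polynomial ring and therefore normal. The graded decomposition
\[ \R(I) = \bigoplus_{k \ge 0}\bigl(St^{2k} \oplus It^{2k+1}\bigr), \]
together with the unique splitting of any $x \in S[t]$ as $x = x_e + t\, y_o$ with $x_e, y_o \in S[t^2]$, will then give the working criterion: $x \in \R(I)$ if and only if $y_o \in I[t^2] := I \cdot S[t^2]$.

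For the backward direction, suppose $I$ is not radical; I would produce $f \in S \setminus I$ with $f^2 \in I$, obtained from a minimal witness $f^n \in I$, $f \notin I$, by replacing $f$ with $f^{\lceil n/2 \rceil}$, and then consider $x := ft$. One has $x^2 = f^2 t^2 \in S[t^2] \subseteq \R(I)$ and $x^3 = f\cdot f^2 \cdot t^3 \in I\cdot t^3 \subseteq \R(I)$, while the criterion shows $x \notin \R(I)$ since $f \notin I$. This obstructs seminormality, and a fortiori weak normality.

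For the forward direction, assume $I$ is radical. Given $x = x_e + ty_o$ with $x^2, x^3 \in \R(I)$, I would compute that the odd parts of $x^2$ and $x^3$ equal $2t\, x_e y_o$ and $t(3x_e^2 y_o + t^2 y_o^3)$ respectively, each of which must lie in $t\,I[t^2]$. Using $\mathrm{char}(k) \neq 2$, the first yields $x_e y_o \in I[t^2]$; multiplying by $x_e$ and substituting into the second condition forces $t^2 y_o^3 \in I[t^2]$, hence $y_o^3 \in I[t^2]$. Since $S[t^2]/I[t^2] \cong (S/I)[t^2]$ is reduced, $I[t^2]$ is radical in $S[t^2]$, so $y_o \in I[t^2]$ and $x \in \R(I)$, proving seminormality. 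For weak normality, I would further consider $x^p \in \R(I)$ together with $px \in \R(I)$ for a prime $p$: if $p \neq \mathrm{char}(k)$, then $p$ is a unit in $S$ and $px \in \R(I)$ already gives $x \in \R(I)$; if $p = \mathrm{char}(k) > 2$, then Frobenius gives $x^p = x_e^p + t^p y_o^p$, whose odd part forces $y_o^p \in I[t^2]$, and radicality of $I[t^2]$ yields $y_o \in I[t^2]$ once more.

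The main technical point lies in the careful extraction of $y_o^3 \in I[t^2]$ from the odd parts of $x^2$ and $x^3$, and the hypothesis $\mathrm{char}(k) \neq 2$ is genuinely essential: the step $2 x_e y_o \in I[t^2] \Rightarrow x_e y_o \in I[t^2]$ requires inverting $2$, and in characteristic $2$ the Frobenius identity $x^2 = x_e^2 + t^2 y_o^2$ erases all odd parts, so the characterization as stated would fail.
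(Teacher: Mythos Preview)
Your argument is correct, and it is genuinely different from the paper's. The paper first establishes the stronger four-way equivalence in Theorem~\ref{thm:RLseminormal}: in particular it shows that when $I$ is radical, for \emph{every} odd integer $\sigma>1$ one has $b^{\sigma}\in\R(I)\Rightarrow b\in\R(I)$, and it proves this by an induction on the number of homogeneous components of $b$ in the $t$-grading, using a reduction that strips off components with coefficient in $I$. The paper then reads off both seminormality (take two coprime odd exponents) and weak normality in characteristic $p>2$ (take $\sigma=p$) from Theorem~\ref{thm:criteriaSNWN}. Your route is more direct: you use the even/odd splitting $x=x_e+t\,y_o$ to reduce the Swan criterion $x^2,x^3\in\R(I)\Rightarrow x\in\R(I)$ to the single implication $y_o^3\in I[t^2]\Rightarrow y_o\in I[t^2]$, which is exactly radicality of $I$, and then handle weak normality with the Frobenius identity $x^p=x_e^p+t^p y_o^p$. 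This bypasses the induction entirely and is more elementary; what you give up is the ``all odd exponents'' statement of Theorem~\ref{thm:RLseminormal}, which the paper proves as a result of independent interest rather than as a necessity for the corollary.
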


The rich source of weakly normal Rees-like algebras indicates that the Rees-like construction should be well-behaved with respect to Frobenius splittings.  We prove the following characterization of $F$-split Rees-algebras.

\begin{MainTheoremF} $($Theorem~\ref{thm:Fsplit}$)$
Suppose $k$ is a field with $\mathrm{char}(k) \neq 2$ and $I$ is a radical ideal in a polynomial ring $S$ over $k$.  The ring $S/I$ is $F$-split if and only if $\R(I)$ is $F$-split. 
\end{MainTheoremF}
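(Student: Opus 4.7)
The plan is to prove both implications by direct constructions on Frobenius splittings, using the $\mathbb{Z}$-grading of $\R(I)$ by $t$-degree: I write $[\R(I)]_n = I_n t^n$ where $I_n = S$ if $n$ is even and $I_n = I$ if $n$ is odd. The edge cases $I = (0)$ and $I = S$ are immediate, so I assume $I$ is proper and nonzero.

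For the forward direction ($S/I$ $F$-split $\Rightarrow \R(I)$ $F$-split), I will extend a Frobenius splitting $\phi_S \colon F_*S \to S$ with $\phi_S(I) \subseteq I$ to a splitting $\Phi \colon F_*\R(I) \to \R(I)$ via
\[
\Phi\Bigl(\sum_n a_n t^n\Bigr) = \sum_{p \mid n} \phi_S(a_n)\, t^{n/p},
\]
that is, the standard extension of $\phi_S$ to the polynomial ring $S[t]$ restricted to $\R(I) \subseteq S[t]$. The main check is that $\Phi$ lands in $\R(I)$: because $p$ is odd (using $\mathrm{char}(k) \neq 2$), $n$ and $n/p$ have the same parity, so $I_{n/p} = I_n$, and the assumption $\phi_S(I) \subseteq I$ then forces $\phi_S(a_n) \in I_{n/p}$ whenever $a_n \in I_n$. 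Frobenius linearity and $\Phi(1) = 1$ follow routinely.

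For the backward direction ($\R(I)$ $F$-split $\Rightarrow S/I$ $F$-split), I will start with any Frobenius splitting $\Phi \colon F_*\R(I) \to \R(I)$ and define $\phi_S(s) := \pi_0(\Phi(s))$, where $\pi_0$ is projection onto $[\R(I)]_0 = S$. This $\phi_S$ is immediately a Frobenius splitting of $S$, since $\pi_0$ is $S$-linear. The remaining task, and the main obstacle, is to show $\phi_S(I) \subseteq I$.

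The key idea will be to exploit two factorizations of $i_0^p\, i\, t^p \in \R(I)$, valid for any nonzero $i_0 \in I$ and arbitrary $i \in I$:
\[
i_0^p\, i\, t^p \;=\; (i_0)^p \cdot (i t^p) \;=\; (i_0 t)^p \cdot i,
\]
where both $i_0 \in S$ and $i_0 t \in [\R(I)]_1 = It$ lie in $\R(I)$, and $i t^p \in [\R(I)]_p = It^p$ also lies in $\R(I)$ since $p$ is odd. Applying Frobenius linearity of $\Phi$ to each factorization and equating yields $i_0 \Phi(i t^p) = (i_0 t) \Phi(i)$. Taking degree $1$ components on both sides, and writing $\pi_1(\Phi(i t^p)) = \psi(i)\, t$ with $\psi(i) \in I$, this identity becomes $i_0 \psi(i)\, t = i_0 \phi_S(i)\, t$; cancelling the non-zero-divisor $t$ in $S[t] \supseteq \R(I)$ and then the nonzero $i_0$ in the domain $S$ yields $\phi_S(i) = \psi(i) \in I$, as required. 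The hypothesis $\mathrm{char}(k) \neq 2$ is essential precisely because it guarantees $[\R(I)]_p = It^p$ rather than $St^p$, so that $i t^p$ genuinely lies in $\R(I)$ for $i \in I$.
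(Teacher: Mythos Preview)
Your proof is correct and follows essentially the same strategy as the paper. The forward direction is identical; in the backward direction the paper first reduces to a \emph{graded} splitting and then uses the factorization $(at^2)\cdot a^{1/p} = (at)(a^{1/p}t)$, whereas you bypass that reduction by composing with the graded projections $\pi_0,\pi_1$ and use the factorization $i_0^p(it^p)=(i_0t)^p i$ with an auxiliary $i_0\in I$---a minor but pleasant variation that makes the ``WLOG graded'' step unnecessary while resting on the same key facts $[\R(I)]_1=It$ and $[\R(I)]_p=It^p$ (for $p$ odd).
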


\

\noindent {\it Acknowledgments:} The authors would like to thank Mark Johnson for many valuable discussions. 

\section{Singular Locus of the Rees-like algebra}

We start by establishing our conventions used through the paper. Unless otherwise stated,  $k$ is a field and $S = k[x_1,\ldots,x_n]$ is a standard graded polynomial ring. We reserve the type face $\matA, \matM, \ldots$ for matrices.   For a specific matrix $\matM$, the notation $I_t(\matM)$ denotes the ideal of $t \times t$-minors. We reserve bold letters $\comF_{\bullet}, \comD_{\bullet}, \ldots$ for chain complexes of modules with differentials $d_{\bullet}^{\comF}, d_{\bullet}^{\comD}, \ldots$
Whenever there is a specified system of generators $g_1,\ldots,g_t$ for an ideal $H$, we simply write $\Jac(J)$ for the Jacobian matrix $\Jac(g_1,\ldots,g_t)$ (e.g. in Theorem~\ref{thm:JacobianRL}).

\

Fix a homogeneous $S$-ideal $I$ with choice of generators $I = (f_1,\ldots,f_m)$. The Rees-like algebra of $I$ is the algebra $S[It, t^2] \subseteq S[t]$, where $t$ is a new variable. We  denote the Rees-like algebra by $\R(I)$. It has an explicit presentation as a quotient of a non-standard graded polynomial ring over $S$, namely $\R(I)\cong T/\RLP(I)$ where $T \coloneqq S[y_1,\ldots,y_m, z]$ has grading defined by $\deg y_i = \deg f_i + 1$ and $\deg z = 2$, and $\RLP(I)$ is a homogeneous ideal of $T$. The usefulness of Rees-like algebras lies in the detailed understanding of the kernel, $\RLP(I)$, of the map of $k$-algebras $T \to \R(I)$ given by $y_i \mapsto f_i t$ and $z \mapsto t^2$ as summarized in the following theorem. 

\begin{Theorem}[{McCullough and Peeva \cite[Theorem 1.6, Proposition 2.9]{MP}}]\label{RLbasics}
The ideal $\RLP(I)$ is the sum $\RLP(I)_{\textrm{syz}} + \RLP(I)_{\textrm{gen}}$ with generators\\
\begin{eqnarray*}
\RLP(I)_{\textrm{syz}} &= & \left\{ r_j \mathrel{\mathop:}= \sum_{i=1}^m c_{ij} y_i\,  \mid \, \sum_{i=1}^mc_{ij}f_i = 0 \right\} \quad \mbox{ and }\quad  \\
\RLP(I)_{\textrm{gen}} & =& \{y_iy_j - zf_if_j \,\mid\, 1 \leq i,j \leq m\}.
\end{eqnarray*}
Moreover, 
\begin{enumerate}
\item  The maximal degree of a minimal generator of $P$  is
$$\hbox{\rm maxdeg} (P) = \max\Bigg\{\,1+\hbox{\rm maxdeg} \Big(\mathrm{Syz}_1^S(I)\Big),\ 2\Big(\mathrm{maxdeg}(I)+1\Big)\,\Bigg\}\,.$$ 
 \item The multiplicity or degree of $T/\RLP(I)$   is $$\deg(T/\RLP(I))=2\,   \prod_{i = 1}^m\Big( \deg(f_i)+1\Big)\,.$$ 
  \item The Castelnuovo-Mumford regularity,
  the  projective dimension, the depth, the codimension, and the dimension of $T/\RLP(I)$ are:
  \begin{itemize}
 \item $\reg(T/\RLP(I))= \reg(S/I) +2 + \sum_{i=1}^m\,\deg(f_i)  $
 \item $\pd(T/\RLP(I))=\pd(S/I)+m-1$ 
 \item $\mathrm{depth} (T/\RLP(I))=\mathrm{depth} (S/I)+2$ 
\item $\h (\RLP(I))=m$
\item $\dim (T/\RLP(I))=n+1$.
\end{itemize}
  \end{enumerate}
  \end{Theorem}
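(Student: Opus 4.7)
My plan is to spend most of the work on establishing the explicit generating set $\RLP(I) = \RLP(I)_{\textrm{syz}} + \RLP(I)_{\textrm{gen}}$, and then to read off the remaining numerical invariants from the explicit form of a minimal free resolution of $T/\RLP(I)$. Both listed families of elements lie in $\RLP(I)$ by direct substitution: a syzygy $\sum_i c_{ij}f_i = 0$ sends $\sum_i c_{ij}y_i \mapsto (\sum_i c_{ij}f_i)t = 0$, while $y_iy_j$ and $zf_if_j$ both map to $f_if_jt^2$. For the reverse inclusion I would run a normal-form argument modulo $\RLP(I)_{\textrm{gen}}$: the relations $y_iy_j \equiv zf_if_j$ reduce every monomial to one involving at most a single $y$-variable, so any $g \in T$ becomes $g \equiv A(x,z) + \sum_i y_iB_i(x,z)$. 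The image in $S[t]$ is $A(x,t^2) + t\sum_i f_iB_i(x,t^2)$; separating even and odd $t$-parts forces $A = 0$ and $\sum_i f_iB_i(x,z) = 0$ in $S[z]$, and expanding $B_i = \sum_\beta b_{i,\beta}(x)z^\beta$ recovers at each $z$-level a syzygy on $(f_1,\ldots,f_m)$ already captured by $\RLP(I)_{\textrm{syz}}$.

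Several items in (1)--(3) fall out quickly. Since $z = t^2 \in \R(I)$, the element $t \in S[t]$ is integral over $\R(I)$, so $S[t]$ is an integral extension of $\R(I)$ and $\dim\R(I) = n+1$; then $\h(\RLP(I)) = \dim T - \dim T/\RLP(I) = (n+m+1) - (n+1) = m$. The maxdeg claim is then just a degree count on the two families of generators, using $\deg y_i = \deg f_i + 1$ and $\deg z = 2$.

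The remaining invariants --- multiplicity, regularity, projective dimension, and depth --- I would extract from an explicit minimal graded free $T$-resolution of $T/\RLP(I)$, which I would assemble from two coupled pieces: a Koszul-type block of length $m-1$ on the quadratic relations $y_iy_j - zf_if_j$, and a block isomorphic (up to the shifts $\deg y_i = \deg f_i + 1$) to the minimal free resolution of $S/I$ over $S$ coming from the linear $\RLP(I)_{\textrm{syz}}$ generators. Adding the lengths gives $\pd(T/\RLP(I)) = \pd(S/I) + (m-1)$; Auslander--Buchsbaum against $\depth T = n+m+1$ converts this to $\depth(T/\RLP(I)) = \depth(S/I) + 2$; reading off the largest twist yields $\reg(T/\RLP(I)) = \reg(S/I) + 2 + \sum_i \deg(f_i)$; and the multiplicity $2\prod_i(\deg(f_i)+1)$ drops out of the Hilbert series, with a conceptual shortcut via the $S[z]$-module decomposition $\R(I) = S[z] \oplus ItS[z]$.

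The principal obstacle is verifying that the Koszul-type block on the quadratic generators is genuinely minimal, and that the two blocks of the resolution interact cleanly without producing unexpected cancellations --- this is where the nonstandard grading, and the interplay between the $y$- and $z$-variables, requires the most care.
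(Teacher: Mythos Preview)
This theorem is not proved in the present paper; it is quoted from McCullough--Peeva \cite{MP} (their Theorem~1.6 and Proposition~2.9), so there is no in-paper argument to compare your proposal against.

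That said, your outline tracks the strategy of \cite{MP} quite closely. The generating set is established there by exactly the normal-form reduction you describe: use the relations $y_iy_j\equiv zf_if_j$ to reduce any element of $T$ to one that is at most linear in the $y$'s, then separate even and odd powers of $t$ in the image. The dimension and height come, as you say, from the module-finite inclusion $\R(I)\hookrightarrow S[t]$. The remaining invariants in \cite{MP} are indeed read off an explicit minimal graded free $T$-resolution of $T/\RLP(I)$, assembled from Koszul data on $y_1,\dots,y_m$ together with the minimal $S$-free resolution of $I$, tensored up to $T$ with the shifts forced by $\deg y_i=\deg f_i+1$. Your $S[z]$-module splitting $\R(I)\cong S[z]\oplus It\cdot S[z]$ is likewise the device used for the multiplicity computation.

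You have correctly isolated the one genuinely nontrivial step: checking that the assembled complex is exact and already \emph{minimal}, with no cancellation between the Koszul block and the syzygy block. In \cite{MP} this is handled by a careful degree argument exploiting the nonstandard grading, and it is the place where ``Koszul-type block of length $m-1$'' would need to be made precise; the honest acknowledgment in your last paragraph is exactly the right caveat.
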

\noindent In the previous theorem, ${\rm maxdeg} M$ denotes the maximal degree of an element in a minimal system of generators of $M$.

Our study of the singular locus of a Rees-like algebra $\R(I)$ is based on an explicit description of the Jacobian matrix $\Jac(\RLP(I))$ via computing a block decomposition. Some of the blocks will be essentially $m \times \binom{m+1}{2}$-sub matrices of the Jacobians of the ideals $(f_1,\ldots,f_m)^2$ (resp. $(y_1,\ldots,y_m)^2$) consisting of rows using only partials corresponding to the variables $x_1,\ldots,x_m$ (resp. $y_1,\ldots, y_m$). Specifically, 
\begin{itemize}
\item denote by $\Jac_{\ul{x}}\left((f_1,\ldots,f_m)^2\right)$ the Jacobian matrix of $(f_1,\ldots,f_m)^2$ with respect to $x_1, \ldots, x_m$, and 
 \item denote by $\Jac_{\ul{y}}\left((y_1,\ldots,y_m)^2\right)$ the Jacobian matrix of $(y_1,\ldots,y_m)^2$ with respect to $y_1, \ldots y_m$. 
\end{itemize}

Another block is described by a minimal free resolution $\comF_\bullet$ of $(f_1,\ldots,f_m)$. Specifically, denote by $d_1^{\comF} \coloneqq d_1^{\comF}(\ul{f})=(c_{ij})$ the first differential in $\comF$, i.e., the matrix whose columns are the first syzygies of the $f_i$. Finally, let  $\matA=(a_{kj})$, where $a_{kj}=\partial_{x_k}r_j=\sum_{i=1}^m \partial_{x_k}(c_{ij})y_i$. With this notation, we may describe the Jacobian $\Jac(\RLP(I))$.

\begin{Proposition}\label{thm:JacobianRL} Using the notation above, up to reordering of the columns and rows, the Jacobian matrix of $\RLP(I)$  has a block decomposition
\be
\Jac(\RLP(I))=\left( \begin{array}{c|| c|c}
 & \mbox{ generators in }\RLP(I)_{\textrm{syz}}	& \mbox{ generators in } \RLP(I)_{\textrm{gen}} \\  \hline\hline
 \vdots &    & \\
\partial_{x_i} & \matA &  - z {\Jac}_{\ul{x}}((f_1,\ldots,f_m)^2) \\
 \vdots &   & \\\hline
 \vdots &  & \\
\partial_{y_j} & d_1^{\mathbf{F}} & {\Jac}_{\ul{y}}((y_1,\ldots,y_m)^2) \\
 \vdots &   & \\\hline
\partial_z & 0 & -f_if_j  
\end{array}  
  \right).
\ee

\end{Proposition}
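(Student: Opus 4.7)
The plan is to verify the block decomposition by a direct computation of partial derivatives, organized according to the natural partition of the generators of $\RLP(I)$ into syzygetic generators $r_j$ and quadratic generators $y_iy_j - z f_i f_j$, together with the natural partition of the variables of $T$ into the blocks $x_1,\ldots,x_n$, then $y_1,\ldots,y_m$, then $z$. I would first fix an ordering of the columns so that the generators of $\RLP(I)_{\textrm{syz}}$ come first and those of $\RLP(I)_{\textrm{gen}}$ come second, and list the rows in the order $\partial_{x_i}, \partial_{y_j}, \partial_z$. This reduces the claim to identifying six explicit sub-blocks, each of which is a one-line computation.

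For the column-block arising from $\RLP(I)_{\textrm{syz}}$, I would use $r_j = \sum_{i=1}^m c_{ij}(\ux)\, y_i$: the $x_k$-partial is $\sum_i \partial_{x_k}(c_{ij}) y_i$, which is by definition $a_{kj}$, so this sub-block is $\matA$; the $y_l$-partial is $c_{lj}$, which is the $(l,j)$ entry of the syzygy matrix and hence of $d_1^{\comF}$; and the $z$-partial vanishes, accounting for the zero block on the bottom-left.

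For the column-block corresponding to $\RLP(I)_{\textrm{gen}}$, I would consider a generator $y_iy_j - z f_i f_j$. Since $f_i, f_j \in S = k[\ux]$ while $y_iy_j$ is independent of $\ux$ and of $z$, the $x_k$-partial is $-z\cdot \partial_{x_k}(f_i f_j)$; the $y_l$-partial is $\partial_{y_l}(y_iy_j)$; and the $z$-partial is $-f_if_j$. Collecting these across the pairs $(i,j)$ with $1 \le i \le j \le m$, arranged in a single fixed order, gives precisely the blocks $-z\,\Jac_{\ul{x}}((f_1,\ldots,f_m)^2)$, $\Jac_{\ul{y}}((y_1,\ldots,y_m)^2)$, and the row of $-f_if_j$'s, respectively.

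There is no substantial obstacle; the computation is elementary calculus and the result is a matter of bookkeeping. The only care required lies in fixing compatible orderings of the pairs $(i,j)$ so that the column of $-z\,\Jac_{\ul{x}}((f_1,\ldots,f_m)^2)$ indexed by $(i,j)$ lies directly above the corresponding column of $\Jac_{\ul{y}}((y_1,\ldots,y_m)^2)$ and above the entry $-f_if_j$, which is exactly the ``up to reordering of columns and rows'' clause built into the statement.
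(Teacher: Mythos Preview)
Your proposal is correct and follows essentially the same approach as the paper: fix the row/column ordering, then compute the six blocks by directly differentiating the syzygetic generators $r_j=\sum_i c_{ij}y_i$ and the quadratic generators $y_iy_j-zf_if_j$ with respect to each variable group. The computations and their organization match the paper's proof almost line for line.
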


\begin{proof}
We order the rows as follows. The first $n$ rows correspond to partial differentiation with respect to the variables $x_1,\ldots,x_n$, the next $m$ rows correspond to $\partial_{y_i}$ for $i=1,\ldots,m$, and the last row corresponds to $\partial_z$. The first $b=\rk(\comF_0)$ columns correspond to the minimal generators $r_1,\ldots,r_b$  in the set $\RLP(I)_{\textrm{syz}}$ described in Theorem~\ref{RLbasics}. The following $\binom{m+1}{2}$ columns correspond to the generators in the set $\RLP(I)_{\textrm{gen}}$.

For the blocks within the first $b$ columns, writing $r_j=\sum_{i=1}^m c_{ij}y_i$ for some $1\leq j \leq b$, by linearity we have 
\be
\partial_{x_k}r_j=\sum_{i=1}^m \partial_{x_k} (c_{ij} y_i) =\sum_{i=1}^m y_i\partial_{x_k} (c_{ij}),
\ee
and clearly $\partial_{y_k}r_j=\sum_{i=1}^m \partial_{y_k} (c_{ij} y_i)=c_{kj}$ and $\partial_zr_j=\sum_{i=1}^m \partial_{z} (c_{ij} y_i)=0$.

\

For the blocks concerning the last $\binom{m+1}{2}$ columns,  set $b_{ij}:=y_iy_j-zf_if_j$ for $1 \leq i\leq j \leq m$. The following calculations finish the proof:
\begin{eqnarray*}
\partial_{x_k}b_{ij} &=& \partial_{x_k}(y_iy_j)-\partial_{x_k}(zf_if_j)  =    -z\partial_{k}(f_if_j), \\
\partial_{y_k}b_{ij} & =& \partial_{y_k}(y_iy_j) - \partial_{y_k}(zf_if_j) = \partial_{y_k}(y_iy_j), \text{ and } \\ 
\partial_zb_{ij} & = & \partial_{z}(y_iy_j) - \partial_{z}(zf_if_j)=-f_if_j. 
\end{eqnarray*}
\end{proof}

\begin{Example}\label{xmp:Jacht2CI}
Let $I=(x_1,x_2)\subseteq k[x_1,x_2]$ and $\R(I)$ be its Rees-like algebra with defining ideal 
\be
\RLP(I) = (-y_1x_2+x_1y_2,\;y_1^2 - zx_1^2, \; y_1y_2 - zx_1x_2,\;y_2^2 - zx_2^2).
\ee
The Jacobian $\Jac(\RLP(I))$ is the following matrix with $4$ columns and $5$ rows. 
\be
\Jac(\RLP(I)) =  \left(\begin{array}{l || c | ccc}
		    & -y_1x_2+x_1y_2 & y_1^2 - zx_1^2 & y_1y_2 - zx_1x_2 & y_2^2 - zx_2^2\\  \hline\hline
\partial_{x_1} & y_2 & 	-2x_1z	& -x_2z 	& 0		\\
\partial_{x_2} & -y_1 &  	0		& -x_1z	& -2x_2z	\\ \hline
\partial_{y_1} & -x_2 &     	2y_1		& y_2	& 0		\\
\partial_{y_2} & x_1 &  	0		& y_1	& 2y_2	\\  \hline
\partial_z & 0	&  	-x_1^2	& -x_1x_2	& -x_2^2
\end{array}\right)
\ee

\end{Example}

With this explicit description of the Jacobian in Proposition~\ref{thm:JacobianRL}, we determine in Theorem~\ref{thm:MinPrimeJacobian} the codimension of the singular locus of the Rees-like algebra of any ideal $I$ when $2$ is unit. Interestingly, this number only depends on the height of $I$ and the number of generators of $I$. Recall, by Theorem~\ref{RLbasics},  $\h(\RLP(I))=\mu(I)$. 

\begin{Theorem}\label{thm:MinPrimeJacobian}
Let $k$ be a field with ${\rm char}(k)=0$. %(The reason is that apparently the Jacobian criterion only holds when $k$ is a perfect field, see e.g. Thm 4.4.9 of the Huneke-Swanson book)}. \\
Let $\R(I)$ be the Rees-like algebra of a nonzero, proper ideal $I$ with  minimal primes $\Min(I) = \{\p_1,\ldots,\p_r\}$. Let $\RLP(I)$ be the defining ideal of $\R(I)$, let $X = \Proj \R(I)$, and let $J$ be the radical ideal of $T$ defining $\Sing(X)$, then 

 $$J =\left( \p_1 + (\underline{y})\right) \cap  \left(\p_2 + (\underline{y}) \right) \cap \cdots \cap \left( \p_r + (\underline{y}) \right).$$  In particular
\begin{itemize}
\item there is a one-to-one correspondence between $\Min(I)$ and $\Min(J)$,
\item $\h(J) = \mu(I) + \h(I)$, and  
\item 
$
\codim_X (\Sing X)=  \h(I).
$
\end{itemize}
\end{Theorem}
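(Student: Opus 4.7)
The plan is to invoke the Jacobian criterion: since $\h(\RLP(I))=m$ by Theorem~\ref{RLbasics} and $k$ is a perfect field of characteristic zero, the reduced ideal defining the singular locus of $X$ equals $J=\sqrt{\RLP(I)+I_m(\Jac(\RLP(I)))}$. I would then establish $J = \bigcap_{i=1}^r (\p_i + (\underline{y}))$ by two separate containments, exploiting the block structure from Proposition~\ref{thm:JacobianRL}. For the forward direction, fix a minimal prime $\p_i$ of $I$. First, $\RLP(I) \subseteq \p_i + (\underline{y})$ since the syzygetic generators $r_j$ lie in $(\underline{y})$ and every generator $y_ay_b - zf_af_b$ reduces modulo $(\underline{y})$ to $-zf_af_b \in \p_i$. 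Next, every block of $\Jac(\RLP(I))$ other than $d_1^{\comF}$ vanishes modulo $\p_i + (\underline{y})$: both $\matA$ and $\Jac_{\underline{y}}((y_1,\ldots,y_m)^2)$ have entries in $(\underline{y})$; the block $-z\,\Jac_{\underline{x}}((f_1,\ldots,f_m)^2)$ has entries divisible by some $f_c \in \p_i$; and the bottom-row entries $-f_af_b$ lie in $\p_i$. So every $m\times m$ minor of $\Jac(\RLP(I))$ reduces modulo $\p_i + (\underline{y})$ to an $m\times m$ minor of $d_1^{\comF}$; these vanish identically in $S$ because $\im d_1^{\comF} = \Syz(I)$ has rank $m-1$, forcing $I_m(\Jac(\RLP(I))) \subseteq \p_i + (\underline{y})$.

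For the reverse containment I would take a prime $\q \supseteq \RLP(I)$ that contains no $\p_i + (\underline{y})$ and exhibit an $m\times m$ minor of $\Jac(\RLP(I))$ outside $\q$. \emph{Case (i):} some $y_k \notin \q$. The $m\times m$ submatrix of $\Jac_{\underline{y}}((y_1,\ldots,y_m)^2)$ indexed by rows $\partial_{y_1},\ldots,\partial_{y_m}$ and columns $y_ky_1,\ldots,y_ky_m$ is triangular upon cofactor expansion along the rows $\partial_{y_i}$ with $i\ne k$ (each such row has a single nonzero entry $y_k$), and its determinant equals $\pm 2 y_k^m$, which lies outside $\q$ because $\mathrm{char}(k)=0$. \emph{Case (ii):} $(\underline{y}) \subseteq \q$. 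The relations $y_ay_b - zf_af_b$ then place $zf_af_b \in \q$ for all $a,b$, forcing $z \in \q$ or $I \subseteq \q$; the latter would yield $\p_i \subseteq \q$ for some $i$, contradicting the hypothesis on $\q$. Hence $z \in \q$ and some $f_k \notin \q$. Since $I \not\subseteq \q \cap S$, localizing at $\q \cap S$ makes $I$ the unit ideal, $\Syz(I)_{\q \cap S}$ is free of rank $m-1$, and some $(m-1)\times(m-1)$ minor of $d_1^{\comF}$ lies outside $\q$. Selecting the corresponding $m-1$ rows $\partial_{y_{i_l}}$ and columns from $\RLP(I)_{\textrm{syz}}$, and augmenting with the row $\partial_z$ and the column $y_k^2$, yields an $m\times m$ submatrix which is block lower triangular modulo $\q$ (the upper-right entries $2 y_k \delta_{i_l,k}$ lie in $(\underline{y})\subseteq \q$), with determinant $\pm f_k^2 \cdot (\text{an } (m-1)\text{-minor of } d_1^{\comF}) \notin \q$.

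Combining the two containments gives $J = \bigcap_{i=1}^r (\p_i + (\underline{y}))$. Because $y_1,\ldots,y_m$ are indeterminates over $S[z]$, each $\p_i + (\underline{y})$ is a prime of $T$ of height $\h(\p_i) + m = \h(I) + m$, and the incomparability of the $\p_i$'s forces these to be exactly the minimal primes of $J$, giving both the bijection $\Min(I) \leftrightarrow \Min(J)$ and the identities $\h(J) = m + \h(I) = \mu(I) + \h(I)$ and $\codim_X(\Sing X) = \h(J) - \h(\RLP(I)) = \h(I)$. I expect the main obstacle to be Case (ii) of the reverse inclusion, where the containment $(\underline{y},z) \subseteq \q$ collapses most of the Jacobian; one must pair a non-vanishing $(m-1)\times(m-1)$ minor of $d_1^{\comF}$ (available because $I$ is locally trivial at $\q \cap S$) with a non-vanishing entry $-f_k^2$ from the $\partial_z$ row, and verify that the cross-entries all vanish modulo $\q$ so that the $m\times m$ determinant factors as a genuine product.
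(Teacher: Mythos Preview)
Your proof is correct and takes essentially the same approach as the paper: both use the block decomposition from Proposition~\ref{thm:JacobianRL}, both extract the minors $2y_k^m$ from the $\Jac_{\underline{y}}$ block to force $(\underline{y})\subseteq\q$, and both build the remaining key minors by pairing the $\partial_z$ row with $m-1$ syzygy columns and one $\RLP(I)_{\textrm{gen}}$ column (the paper packages this as $(f_1,\ldots,f_m)^2\cdot I_{m-1}(d_1^{\comF})\subseteq I_m(\Jac(\RLP(I)))$ and then invokes Buchsbaum--Eisenbud for $\sqrt{I_{m-1}(d_1^{\comF})}=\sqrt{I}$, which is exactly your localization/rank argument). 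The only differences are cosmetic---your contrapositive framing with a case split, and your sharper observation that $I_m(d_1^{\comF})$ vanishes identically since $\rk d_1^{\comF}=m-1$; note also that your Case~(ii) determinant factors on the nose (the $\partial_z$ row of the syzygy block is identically zero), so no reduction modulo $\q$ is needed there.
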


\begin{proof} \sloppy Set $m = \mu(I)$. The Jacobian criterion states that $\Sing(\R(I))$ is scheme theoretically defined by the ideal 
 $I_m(\Jac(\RLP(I)))$, 
so it suffices to show $$\Min(I_m(\Jac(\RLP(I)))) = \{\p_i+(\ul{y})\,\mid\,i=1,\ldots,r\}.$$

We use the notation of Proposition~\ref{thm:JacobianRL} for the Jacobian matrix $\Jac(\RLP(I))$. First note that there are no containments among the set of primes of the form $\p_i+(\ul{y})$ as clearly there are no containments among the ideals $\p_i$ and these ideals are transversal with the ideal $(\ul{y})$. To prove that $\Min(J) = \{\p_i+(\ul{y})\,\mid\, i=1,\ldots,r\}$ we start by showing that $J \subset \p_i+(\ul{y})$ for all $1 \leq i \leq r$. Fix arbitrary such $i$ and invoke Theorem~\ref{thm:JacobianRL} to observe that 
\begin{itemize}
\item all entries of $\matA$ and $\Jac_{\ul{y}}((y_1,\ldots,y_m)^2)$ lie in $(y_1,\ldots,y_m)$,
\item all entries of the block matrices $-z\Jac_{\ul{x}}((f_1,\ldots,f_m)^2)$ and $(-f_if_j)$ lie in $(f_1,\ldots,f_m)$.
\end{itemize}
Thus, any $m$-minor of $\Jac(\RLP(I))$ involving one of the last $\binom{m+1}{2}$ columns or one of the rows corresponding to $\partial_{x_j}$ or $\partial_z$ is contained in $I+(\ul{y}) \subseteq \p_i+(\ul{y})$. The remaining $m$-minors  generate $I_m(c_{ij})$, which is contained in $\sqrt{I}$  (e.g. \cite[Thm~2.1(b)]{BE74}) and thus in $\p_i\subseteq \p_i+(\ul{y})$ .

For the converse, let $\q$ be a prime ideal containing $J=\sqrt{I_m(\Jac(\RLP(I)))}$. By Proposition~\ref{thm:JacobianRL}, $\q$ contains the ideal of $m$-minors $I_m(\Jac_{\ul{y}}((y_1,\ldots,y_m)^2))$ and so it contains the ideal $(2y_1^m,\,2y_2^m,\ldots,2y_m^m)$. Since $\q$ is prime and ${\rm char}(k)\neq 2$, then $(y_1,\ldots,y_m) \subseteq \q.$ 

To finish the proof we show that $\q$ contains one of the $\p_i$'s. The ideal $(f_1,f_2,\ldots,f_m)^2 \cdot I_{m-1}(c_{ij})$ is the ideal generated by all $m$-minors determined by the last row, $(m-1)$ of the $m$ rows corresponding to $\partial_{y_k}$, $(m-1)$ of the first $b$ columns (corresponding to the generators in $\RLP(I)_{\textrm{syz}}$), and one column among the last $\binom{m+1}{2}$ (corresponding to one of the generators in $\RLP(I)_{\textrm{gen}}$). As such, we have 
\be
(f_1,f_2,\ldots,f_m)^2 \cdot I_{m-1}(c_{ij})\subseteq I_m(\Jac(\RLP(I))).
\ee
Since $(c_{ij})=(d_1^{\comF})$, by  \cite[Thm~2.1(b)]{BE74} we have 
$\sqrt{I} = \sqrt{I_{m-1}(c_{ij})}$, thus 
taking radical of both sides in the above inclusion, and noticing that the radical of the left-hand side is simply $\sqrt{I}$ and the radical of the right-hand side is $J$, we finally obtain
\be
\p_1 \cap \p_2 \cap \cdots \cap \p_r  = \sqrt{I} = \sqrt{(f_1,f_2,\ldots,f_m)^2 \cdot \sqrt{I}} \subseteq J\subseteq \q.
\ee
Therefore, $\q$ contains one of the $\p_i$. This concludes the proof.

\end{proof}

\begin{Remark} 
A very similar argument proves the statement over any perfect field of ${\rm char}(k)=p>2$ as long as $p$ does not divide the degree of any minimal generator of $I$. So for instance the result will still hold true if ${\rm char}(k)=p\gg 0$.
\end{Remark}

When $S$ is a polynomial ring over a field, there are more conceptual proofs of Theorem~\ref{thm:MinPrimeJacobian}. Specifically, K. E. Smith noted in preliminary discussions with us that as $S[It, t^2]$ has a smooth normalization given by $S[t]$, the singularities are relatively mild and defined by the conductor ideal, which can be shown to be $I + It$. However, our explicit approach to the Jacobian also gives similar results for Rees-like algebras of ideals in quotients of polynomial rings. As an example, analogous arguments to those proving Theorem~\ref{thm:MinPrimeJacobian} can be used to prove the following result where the ground ring $S$ is not regular.

\begin{Theorem}\label{isol} 
Let $S={\mathbb C}[x_1,\ldots,x_n]/\p$ where  $\p$ is a non-degenerate homogeneous prime ideal with ${\Sing \Proj}(S) \subseteq \{(x_1,\ldots,x_n)S\}$. Let $I$ be a homogeneous $S$-ideal.

If the presentation matrix of $I$ as an $S$-ideal contains no linear forms, then 
the same conclusions of Theorem \ref{thm:MinPrimeJacobian} holds with the adjustment that also $(x_1,\ldots,x_n)$ is a minimal prime of the singular locus of the Rees-like algebra.
\end{Theorem}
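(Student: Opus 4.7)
The plan is to mirror the proof of Theorem~\ref{thm:MinPrimeJacobian}, after lifting the problem to a regular ambient ring so that the Jacobian criterion applies directly. Fix a homogeneous generating set $\p = (g_1, \ldots, g_s)$ of the defining prime of $S$ and work in $T' := \mathbb{C}[x_1, \ldots, x_n, y_1, \ldots, y_m, z]$, so that $\R(I) \cong T'/J'$ with $J' := \p T' + \RLP(I)$. Since the generators of $\p$ involve only the $x_i$ and $\h(\RLP(I)) = m$ by Theorem~\ref{RLbasics}, we have $\h(J') = c := \h(\p) + m$. The Jacobian of $J'$ extends the matrix in Proposition~\ref{thm:JacobianRL} by a new column block for the generators of $\p$; because these involve only the $x_i$, the new block contributes $\Jac(\p)$ in the $\partial_{x_i}$-rows and zero elsewhere, producing a block lower-triangular structure.

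Identifying the components $\p_i T' + (\underline{y}) T'$, for each minimal prime $\p_i$ of $I$ lifted to $\mathbb{C}[\underline{x}]$, proceeds as in Theorem~\ref{thm:MinPrimeJacobian}, augmented by the new column block. By block expansion, each $c$-minor of $\Jac(J')$ decomposes as a sum of products of an $a$-minor of $\Jac(\p)$ and a $(c-a)$-minor of the $\Jac(\RLP(I))$-block, for some $0 \le a \le c$. When $a \ge \h(\p)+1$, the $\Jac(\p)$-factor lies in $I_{\h(\p)+1}(\Jac(\p)) \subseteq \p \subseteq \p_i$ by the generic smoothness of $V(\p)$. When $a \le \h(\p)$, the $(c-a)$-minor of $\Jac(\RLP(I))$ has size $\ge m = \h(\RLP(I))$ and lies in $\p_i + (\underline{y})$: either by the analysis already done in Theorem~\ref{thm:MinPrimeJacobian} (when the size is exactly $m$) or by virtue of sitting inside $\RLP(I) \subseteq \p_i + (\underline{y})$ via generic smoothness of $V(\RLP(I))$ (when the size strictly exceeds $m$). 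Either way, the $c$-minor lies in $\p_i + (\underline{y})$.

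For the additional component at $(\underline{x})$, the hypothesis that the presentation matrix of $I$ has no linear entries is essential: it forces each syzygy coefficient $c_{ij}$ into $(\underline{x})^2$, so entries of $\matA$ lie in $(\underline{x})(\underline{y})$, entries of $d_1^{\comF}$ in $(\underline{x})^2$, and entries of $-z\Jac_{\underline{x}}((f_1,\ldots,f_m)^2)$ in $(\underline{x})$. Combined with $\Jac_{\underline{y}}((\underline{y})^2)$-entries in $(\underline{y})$ and $\Jac(\p)$-entries in $(\underline{x})$ (from the non-degeneracy of $\p$), every $c$-minor of $\Jac(J')$ lies in $(\underline{x}) + (\underline{y})$; hence the prime $(\underline{x})T' + (\underline{y})T'$ contains $J' + I_c(\Jac(J'))$. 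Its image in $\R(I)$ is the radical of $(x_1,\ldots,x_n)\R(I)$, using $y_iy_j \in (\underline{x})\R(I)$ from the $\RLP(I)_{\textrm{gen}}$ relations, which justifies the theorem's description of the new component as ``$(x_1,\ldots,x_n)$''.

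The main obstacle is the converse: showing that every prime $\mathfrak{q}$ containing $J' + I_c(\Jac(J'))$ contains one of the above. Emulating the second half of the proof of Theorem~\ref{thm:MinPrimeJacobian}, the $c$-minors $\det(\Jac(\p)_\sigma) \cdot 2 y_j^m$ (for each $\h(\p)$-row subset $\sigma$ and each $j$) force $I_{\h(\p)}(\Jac(\p)) \cdot y_j^m \subseteq \mathfrak{q}$ for all $j$. Primality bifurcates the argument: either $I_{\h(\p)}(\Jac(\p)) \subseteq \mathfrak{q}$, in which case the isolated-singularity hypothesis yields $(\underline{x}) \subseteq \mathfrak{q}$, and primality applied to $y_i^2 \in \RLP(I) + (\underline{x}) \subseteq \mathfrak{q}$ forces $(\underline{y}) \subseteq \mathfrak{q}$; or all $y_j$ lie in $\mathfrak{q}$, so $(\underline{y}) \subseteq \mathfrak{q}$, and the linkage argument $(f_1,\ldots,f_m)^2 \cdot I_{m-1}(d_1^{\comF}) \subseteq I_c(\Jac(J'))$ of Theorem~\ref{thm:MinPrimeJacobian} combined with \cite[Thm~2.1(b)]{BE74} forces some $\p_i \subseteq \mathfrak{q}$. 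The codimension formula $\codim_X \Sing X = \h(I)$ then follows from the height computation $\h(\p_i T' + (\underline{y})T') - \h(J') = \h(I)$.
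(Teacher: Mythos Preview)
Your approach is exactly the ``analogous argument'' the paper alludes to (the paper itself gives no proof of Theorem~\ref{isol}, only the illustrative Example and its verification), so the lift to the regular ring $T'$, the block-triangular Jacobian, and the bifurcation on $I_{\h(\p)}(\Jac(\p))$ are all the right moves.

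There is one technical slip and one genuine tension you should address. The slip: in the second branch of your dichotomy you assert $(f_1,\ldots,f_m)^2\cdot I_{m-1}(d_1^{\comF})\subseteq I_c(\Jac(J'))$, but a $c$-minor must use $\h(\p)$ additional rows and columns, which forces an $I_{\h(\p)}(\Jac(\p))$ factor; what actually lies in $I_c(\Jac(J'))$ is $I_{\h(\p)}(\Jac(\p))\cdot (f_1,\ldots,f_m)^2\cdot I_{m-1}(d_1^{\comF})$. This is harmless for your conclusion, since in that branch $I_{\h(\p)}(\Jac(\p))\not\subseteq\mathfrak q$ and primality of $\mathfrak q$ lets you cancel, but the inclusion as written is false. (Similarly, your appeal to ``generic smoothness of $V(\RLP(I))$'' for the $(c-a)>m$ case is not quite right, because $\RLP(I)T'$ is not prime in $T'$; the clean argument is that modulo $\p_i+(\underline y)$ the $\Jac(\RLP(I))$-block has at most $m$ nonzero rows, so any larger minor vanishes.)

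The more serious point: your own first paragraph correctly shows $J'+I_c(\Jac(J'))\subseteq \p_i+(\underline y)$ for every $i$. But every lift $\p_i$ is a proper homogeneous prime of $\mathbb C[\underline x]$, hence $\p_i\subseteq(\underline x)$, so $(\underline x,\underline y)\supsetneq \p_i+(\underline y)$. Thus $(\underline x,\underline y)$ can never be a \emph{minimal} prime of the singular locus; it is always swallowed by the $\p_i+(\underline y)$. Your argument establishes that $\Min(\Sing X)=\{\p_i+(\underline y)\}$ and $\codim_X(\Sing X)=\h(I)$ exactly as in Theorem~\ref{thm:MinPrimeJacobian}, which is presumably the intended content; but it does \emph{not} establish the literal ``adjustment'' in the theorem's statement, and indeed that adjustment cannot hold as phrased. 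You should flag this rather than silently claim to have verified it.
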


In Theorem \ref{isol}, the assumption on the presentation matrix of $I$ is needed, as the following example illustrate.
\begin{Example}
Assume  ${\rm char}(k)\neq 2$ and let $S=k[x_1,x_2,x_3]/(x_1^2-x_2x_3)$ and $I=(x_1,x_2)S$. Observe that $I$ has the linear syzygies $(x_2, -x_1)$ and $(x_1, -x_3)$. The singular locus of $\R(I)$ has only one minimal prime, which is $(x_1,x_2,y_1,y_2)$.
\end{Example}

\begin{proof}
As in the first part of the statement of Theorem \ref{RLbasics}, one has $\R(I) \cong k[x_1,x_2,x_3,y_1,y_2,z]/Q$, where 
$$Q = (x_1y_2-x_2y_1, x_1y_1-x_3y_2, y_1^2-x_1^2z, y_1y_2-x_1x_2z, y_2^2-x_2^2z, x_1^2-x_2x_3).$$
Then, the Jacobian matrix of $Q$ over $k[x_1,x_2, x_3 ,y_1,y_2,z]$ is
\be
\matM=\left(\begin{array}{l || cc | ccc | c}
\partial_{x_1} & y_2 & y_1		&	-2x_1z	& -x_2z 	& 0		& 2x_1 \\
\partial_{x_2} & -y_1 &  0		&	0		& -x_1z	& -2x_2z	& -x_3 \\ 
\partial_{x_3} & 0	 &  -y_2	&	0		& 	0	& 	0	& -x_2 \\ \hline
\partial_{y_1} & -x_2 &     x_1	& 2y_1		& y_2	& 0		& 0 \\
\partial_{y_2} & x_1 & -x_3	& 0			& y_1	& 2y_2	&  0 \\  \hline
\partial_z 	     & 0	& 0		&  -x_1^2	& -x_1x_2	 & -x_2^2 & 0
\end{array}\right)
\ee
and set $J=\sqrt{I_3(\matM)}$. It easy to check that $J\subseteq (x_1,x_2,y_1,y_2)$.

\noindent Conversely, let $\q \in \Min(J)$. Notice that $$\det\left(\begin{array}{ccc}
y_2 & -2x_1z 	& 2x_1 \\
x_1 & 0		& 0	   \\
0     & -x_1^2	& 0
\end{array}\right) = -2x_1^4\in J.$$ Similarly, $2x_2^4\in J$. Since ${\rm char}(k)\neq 2,$ then $x_1,x_2\in \sqrt{J}\subseteq q$. Moreover,
$$
\det\left(\begin{array}{ccc}
-2x_1z & -x_2z 	& 2x_1 \\
2y_1 & y_2	& 0	   \\
0     & y_1		& 0
\end{array}\right)=4x_1y_1^2 \in J.$$ Similarly, $x_2y_1^2, x_3y_1^2, x_2y_1^2,  x_2y_2^2,$ and $x_3y_2^2$ lie in $J$. Thus one has an inclusion $(y_1^2, y_2^2)(x_1,x_2,x_3)\subseteq J$. 

Now assume by contradiction that $(y_1,y_2)\not\subseteq \q$. One then has  $(x_1,x_2,x_3)\subseteq \q$. Reducing the entries of $\matM$ modulo $(x_1,x_2,x_3)$ one sees that $(2y_1^3, 2y_2^3)\subseteq \q$. This shows that $(y_1,y_2)\subseteq \sqrt{J}\subseteq q$, which is a contradiction.

\end{proof}

We close this section with two examples showing that describing all of the associated primes of the singular locus of a Rees-like algebra would be complicated, even in relatively simple examples.

\begin{Example}
Let $I=(x_1,x_2)\subseteq k[x_1,x_2]$ and let $\R(I)$ be its Rees-like algebra. One can easily check 
\be
\h(I_2(\Jac(\RLP(I)))) =2+2=4
\ee
and 
$\Min(I_2(\Jac(\RLP(I)))) = \{(x_1,x_2,y_1,y_2)\}$ however $$\Ass(I_2(\Jac(\RLP(I)))) =  \{(x_1,x_2,y_1,y_2), (x_1,x_2,y_1,y_2,z)\}.$$

\end{Example}

\begin{Example}
Let $X_{2\times 3}$ be a generic 2 by 3 matrix and $I=I_2(X_{2\times 3})\subseteq k[X_{2\times 3}]$; it is well-known that $I$ is prime. Let $\R(I)\cong T/\RLP(I)$ be its Rees-like algebra, then 
$\Ass(I_3(\Jac(\RLP(I)))) = \{I+(\ul{y}),\; I+(\ul{y}, z),\; (x_{ij}, \ul{y})\}$ whereas $\Min(I_3(\Jac(\RLP(I)))) =\{I+(\ul{y})\}.$

\end{Example}

\section{\straightening s}\label{hom}

The usual way one homogenizes a non-homogeneous ideal $I \subset k[x_1,\ldots,x_n]$ is by adjoining a new variable, say $w$, and homogenizing all terms of all elements of the ideal by multiplying by the appropriate power of $w$ to make the element homogeneous.  This corresponds to taking the projective closure of $V(I)$ in $\mathbb{P}^n_k$.  Thus the resulting homogeneous ideal is prime but this process does not preserve the structure of free resolution of the corresponding ideal. An alternate method of constructing standard graded analogues of nonstandard graded prime ideals, called step-by-step homogenization in \cite[Theorem 4.5]{MP}, preserves primeness for nondegenerate prime ideals and graded Betti numbers at the expense of adding many more variables. For each variable $x$ with $\deg(x) = d > 1$, one appends a new variable $u$, sets $\deg(u) = \deg(x) = 1$ and replaces every instance of $x$ with $xu^{d-1}$.  As the role of this process is to transform a non-standard graded ring into a standard graded one, we refer to it as a \straightening.

\begin{Definition}
Suppose $T$ is a positively graded polynomial ring over a field $k$. A {\bf standardization} of $T$ is a graded, flat map $(\Arg)^\homog \colon T \to T^\homog$ of graded $k$-algebras, where $T^\homog$ is standard graded polynomial ring over $k$.  For an ideal $I = (f_1,\ldots,f_m) \subseteq T$, write $I^\homog$ for the $T^\homog$-ideal $(f_1^\homog,\ldots,f_m^\homog)$.
\end{Definition}

Thus step-by-step homogenization is a standardization that has the additional property that for any nondegenerate prime ideal  $Q$ of $T$, the ideal $Q^\homog$ is also prime.  Any standardization will thus increase the number of variables and thereby increase the size of the singular locus of the corresponding varieties. However, it is desirable that the codimension of the singular locus is preserved. Unfortunately, step-by-step homogenization does not preserve it. 

\begin{Example}\label{xmp:CodimDropsStepbyStep}
Let $Q= I_2\left[ \begin{array}{ccc}
x & y & z \\
u & v & w \end{array}\right] \subset S = k[u,v,w,x,y,z]$, with the non-standard grading given by setting $$\deg(x) =\deg(y)= \deg(z) =2 \text{ and } \deg(u)=\deg(v)=\deg(w)=1.$$ Consider the step-by-step \straightensp given by the ring map $$S \to S^\homog := k[u,v,w,x_1,x_2,y_1,y_2,z_1,z_2]$$ given by $x \mapsto x_1 x_2$, $y \mapsto y_1 y_2$, and $z \mapsto z_1 z_2$. The image of $Q$ is $$Q^\homog=(x_1x_2v-y_1y_2u, x_1x_2w-z_1z_2u, y_1y_2w-z_1z_2v).$$ One may easily verify that $\h(Q) = 2$, $\h(Q^\homog) =2$, and $\h(I_2(\Jac(Q))) =  6$ yet $I_2(\Jac(Q^\homog))$ has height $5$. 
One can also build examples of Rees-like algebras whose singular locus codimension fails to be preserved in a similar fashion.
\end{Example}

We adapt work of Ananyan and Hochster to define new \straighteningsp that preserve the relative size of the singular locus.  Following \cite{AH}, we define a sequence of elements $g_1,\ldots, g_t \in S$ to be a {\bf prime sequence} provided $(g_1,\ldots,g_t)$ is a proper ideal and $S/(g_1,\ldots,g_i)$ is a domain for all $1 \le i \le t$.  Clearly any prime sequence is a regular sequence.  The following near converse is implicit in their work.

\begin{Lemma} \label{primeseq}
Let $S$ be a standard graded polynomial ring and let $g_1,\ldots,g_t \in S$ be a homogeneous regular sequence of elements of positive degree.  If $I$ is prime, then $g_1,\ldots,g_t$ is a prime sequence.  Moreover, any permutation of $g_1,\ldots,g_t$ is a prime sequence.
\end{Lemma}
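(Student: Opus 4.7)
The plan is to reduce the lemma to showing that for each $i$ with $1\leq i\leq t$, the ideal $J_i := (g_1,\ldots,g_i)$ is prime. Granted this, the permutation statement follows immediately: a homogeneous regular sequence of positive-degree elements in a polynomial ring can be reordered arbitrarily and remain a regular sequence, so applying the first assertion to any reordering $g_{\sigma(1)},\ldots,g_{\sigma(t)}$ (whose total ideal is still $I$) delivers a prime sequence. The case $i=t$ is $I$ itself, so one fixes $i<t$ and sets $J=J_i$.

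The argument rests on two ingredients. The first is a local calculation at $I$: the localization $S_I$ is a regular local ring of dimension $t$ whose maximal ideal $IS_I$ is generated by the images of $g_1,\ldots,g_t$. Since the number of generators matches $\dim S_I$, this is a minimal generating set of the maximal ideal, i.e., a regular system of parameters, and hence each quotient $S_I/(g_1,\ldots,g_i)S_I$ is a regular local ring of dimension $t-i$; in particular $JS_I$ is prime. The second ingredient is that every minimal prime $\p$ of $J$ in $S$ is contained in $I$. Indeed, let $\q$ be any minimal prime of $\p+(g_{i+1},\ldots,g_t)$ in $S$. Krull's height theorem applied inside the domain $S/\p$ yields $\h(\q)\leq\h(\p)+(t-i)=t$, while $\q\supseteq\p+(g_{i+1},\ldots,g_t)\supseteq I$ forces $\h(\q)\geq\h(I)=t$. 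Since $S$ is a catenary domain, two primes of the same height one containing the other must coincide, so $\q=I$ and consequently $\p\subseteq I$.

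Combining these, every minimal prime of $J$ survives in $S_I$ as a minimal prime of $JS_I$; since $JS_I$ is itself prime, $J$ has a single minimal prime $\p$ and $JS_I=\p S_I$. Localizing once more at $\p$ gives $JS_\p=\p S_\p$, so $(S/J)_\p$ is a field, which is Serre's condition $R_0$ at the unique minimal prime of $S/J$. Because $J$ is generated by a regular sequence in the Cohen-Macaulay ring $S$, the quotient $S/J$ is Cohen-Macaulay and in particular satisfies $S_1$; Serre's criterion then gives that $S/J$ is reduced. A reduced ring with exactly one minimal prime is a domain, so $J$ is prime. I expect the crux of the proof to be the first ingredient: recognizing that $g_1,\ldots,g_t$ has exactly the right length to become a regular system of parameters at $I$ is precisely what upgrades the primality of $I$ into primality of every initial ideal.
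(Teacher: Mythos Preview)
Your proof is correct and takes a genuinely different route from the paper. The paper argues by an elementary descent: choose $i$ maximal with $I_i=(g_1,\ldots,g_i)$ not prime, take homogeneous $a,b\notin I_i$ with $ab\in I_i$ of minimal total degree, use primeness of $I_{i+1}$ to place (say) $a=\sum_{j\le i+1}s_jg_j$ in $I_{i+1}$, and then observe that $bs_{i+1}g_{i+1}\in I_i$ forces $bs_{i+1}\in I_i$ by regularity of $g_{i+1}$ on $S/I_i$; since $\deg(s_{i+1})<\deg(a)$, minimality gives $s_{i+1}\in I_i$ and hence $a\in I_i$, a contradiction. Your approach instead is structural: you exploit that $g_1,\ldots,g_t$ becomes a regular system of parameters in $S_I$, deduce that $J_iS_I$ is prime, pin every minimal prime of $J_i$ under $I$ via a height count, and then finish with Serre's criterion ($S/J_i$ is Cohen--Macaulay and generically a field, hence reduced with a unique minimal prime). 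The paper's argument is shorter and entirely elementary, needing nothing beyond the graded Nakayama-style degree bookkeeping; your argument is heavier in machinery but more transparent geometrically, and it makes clear that the statement holds in any regular catenary (equicodimensional) local or graded ring where regular sequences are permutable---not just standard graded polynomial rings. One small stylistic point: when you invoke ``Krull's height theorem applied inside $S/\p$,'' that bounds $\h(\q/\p)$ rather than $\h(\q)$ directly; you do invoke catenarity a line later, so the argument is complete, but it would read more cleanly to combine the two steps explicitly.
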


\begin{proof} Proceed by contradiction and set $I_i = (g_1,\ldots,g_i)$. Pick $i$ maximal so that $I_i$ is not prime, so $i < t$.  Pick homogeneous elements $a$ and $b$ in $S \setminus I_i$ with $ab \in I_i$ and with $\deg(ab)$ minimal.  Since $I_{i+1}$ is prime, without loss of generality we may assume $a \in I_{i+1}$.  Writing $a = \sum_{j = 1}^{i+1} s_j g_j$, we have $$bs_{i+1}g_{i+1} = b(a - \sum_{j=1}^i s_j g_j) \in I_i.$$  Since $g_1,\ldots,g_t$ is a regular sequence, $bs_{i+1} \in I_i$.  Also $\deg(bs_{i+1}) < \deg(ab)$.  By the minimality assumption, this gives $s_{i+1} \in I_i$ and hence $a \in I_i$, which is a contradiction.
\end{proof}

The usefulness of this idea is contained in the following result, which is essentially the content of \cite[Cor. 2.9, Prop. 2.10]{AH}.

\begin{Proposition}[Ananyan and Hochster]\label{Q^h} Assume $k$ is algebraically closed and let $S = k[x_1,\ldots,x_n]$.  Suppose $g_1,\ldots,g_t$ is a homogeneous prime sequence in $S$ and set $R = k[g_1,\ldots,g_t]$.  Suppose $I \subset R$ is a homogeneous ideal.
\begin{enumerate}
\item The ideals $I$ and $IS$ have the same graded Betti numbers.
\item For $\p \in \Spec(R)$, $\p \in \ass(R/I)$ if and only if $\p S \in \ass(S/IS)$. 
\item In particular, if $I$ is prime, then $IS$ is prime.
 \item If $I=\q_1\cap \cdots \cap \q_r$ is any primary decomposition of $I$, then $\q_1S\cap \cdots \cap \q_rS$ is a primary decomposition of $IS$.
\end{enumerate} 
\end{Proposition}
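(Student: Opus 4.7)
The plan is to establish graded faithful flatness of $R \hookrightarrow S$ (from which part (1) follows formally), prove part (3) by induction on $t$, and derive parts (2) and (4) from (3) via standard results on faithfully flat base change. Since $g_1, \ldots, g_t$ is a homogeneous regular sequence in the polynomial ring $S$, the elements are algebraically independent over $k$, so $R = k[g_1, \ldots, g_t]$ is itself a polynomial ring in $t$ variables. The inclusion $R \hookrightarrow S$ is graded faithfully flat by graded miracle flatness: $R$ is regular, $S$ is Cohen-Macaulay, and $\dim S/(g_1, \ldots, g_t)S = n - t = \dim S - \dim R$ since the $g_i$ are a regular sequence, so the induced graded-local map is flat; faithful flatness is then immediate since the graded maximal ideal of $R$ lands inside that of $S$. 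For (1), if $\comF_\bullet \to I$ is a minimal graded free resolution over $R$, then $\comF_\bullet \otimes_R S \to IS$ is a minimal graded free resolution over $S$: flatness preserves exactness, the grading is preserved because $R \hookrightarrow S$ is graded, and the differentials have entries in $\mathfrak{m}_R \cdot S \subseteq \mathfrak{m}_S$.

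For (3), I would induct on $t$, relying heavily on Lemma~\ref{primeseq}. The case $t = 1$ is exactly the hypothesis that $g_1$ is prime in $S$. For the inductive step, let $\p$ be a homogeneous prime of $R$. If $\p$ contains some $g_i$, permute the sequence via Lemma~\ref{primeseq} so that $g_i = g_t$ is last; then $\bar S := S/(g_t)$ is a domain and $\bar g_1, \ldots, \bar g_{t-1}$ is a prime sequence in $\bar S$ (again by the permutation invariance of Lemma~\ref{primeseq} applied after the reduction), so the inductive hypothesis applied to the prime $\p/(g_t) \subset k[g_1, \ldots, g_{t-1}]$ yields that $\p S/(g_t)S$ is prime, hence $\p S$ is prime. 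The delicate case is when no $g_i$ lies in $\p$; here I would follow Ananyan--Hochster, using that the closed fiber $S/(g_1, \ldots, g_t)S$ is a finitely generated $k$-algebra and a domain, hence geometrically integral over the algebraically closed $k$. Faithful flatness then propagates integrality to every fiber $S \otimes_R \kappa(\p)$, and $S/\p S$ embeds into this fiber via flatness of $R/\p \to S/\p S$ (nonzerodivisors of the domain $R/\p$ remain nonzerodivisors on $S/\p S$), so $S/\p S$ is itself a domain.

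For (2) and (4), apply the standard formula $\ass_S(M \otimes_R S) = \bigcup_{\p \in \ass_R(M)} \ass_S(S/\p S)$ for associated primes under faithfully flat extensions. Setting $M = R/I$ and using (3), which gives $\ass_S(S/\p S) = \{\p S\}$, yields (2). For (4), flatness gives $IS = \q_1 S \cap \cdots \cap \q_r S$ for any primary decomposition $I = \q_1 \cap \cdots \cap \q_r$, and (3) applied to $\sqrt{\q_i} = \p_i$ combined with the associated primes formula shows $\ass_S(S/\q_i S) = \{\p_i S\}$, so each $\q_i S$ is $\p_i S$-primary. The main obstacle is the fiber-integrality argument in the second subcase of (3), where $\p$ avoids every $g_i$; this is precisely the step where the algebraic closedness of $k$ becomes essential and where one must import the Ananyan--Hochster machinery rather than rely on the direct permutation induction.
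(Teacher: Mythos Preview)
The paper does not supply its own proof of this proposition; it simply attributes the result to Ananyan--Hochster \cite[Cor.~2.9, Prop.~2.10]{AH}. So there is no ``paper's proof'' to compare against, and your proposal stands or falls on its own. Your treatment of faithful flatness via miracle flatness is correct, and parts (1), (2), (4) follow from (3) exactly as you say.

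The argument for (3), however, has a structural gap. In the case $g_i \in \p$, you pass to $\bar S = S/(g_t)$ and invoke the inductive hypothesis there; but $\bar S$ is not a polynomial ring, so the proposition as stated does not apply to it. You could repair this by strengthening the inductive statement to allow $S$ to be any graded Cohen--Macaulay $k$-domain of finite type (miracle flatness and the prime-sequence condition both survive the passage to $\bar S$), but then you must check that the delicate case still goes through in that generality. More seriously, your ``delicate case'' is where the entire content lives, and you do not actually prove it: the assertion that ``faithful flatness propagates integrality to every fiber'' from a geometrically integral closed fiber is not a standard lemma and is exactly what Ananyan--Hochster's argument supplies. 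Once that fiber argument is available it handles \emph{every} prime $\p$ uniformly (whether or not some $g_i \in \p$), so the case split and the induction are redundant. In short, your outline is sound for the formal reductions, but the one substantive step---integrality of all fibers of $\Spec S \to \Spec R$---is deferred rather than established.
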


Homogeneous prime sequences give rise to \straighteningsp and we make the following definition.

\begin{Definition}
Suppose $T$ is a positively graded polynomial ring. A {\bf prime standardization} of $T$ is a standardization $(\Arg)^\homog \colon T \to T^\homog$ such that for every prime ideal $P \subseteq T$, $P^\homog$ is prime.
\end{Definition}

To see the connection with prime sequences, we note the following:

\begin{Proposition}
Let $T = k[x_1,\ldots,x_n]$ be a positively graded polynomial ring and $(\Arg)^\homog \colon T \to T^\homog$ a standardization.  Let $g_i := x_i^\homog$.  Then $(\Arg)^\homog$ is a prime standardization if and only if $g_1,\ldots,g_n$ is a prime sequence.
\end{Proposition}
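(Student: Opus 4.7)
The plan is to prove both implications directly from the definitions, using the chain of coordinate primes in $T$ for the forward direction and Proposition~\ref{Q^h}(3) for the backward direction.

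For $(\Rightarrow)$, I would use the chain of ideals $P_i := (x_1,\dots,x_i) \subseteq T$ for $0 \le i \le n$. Each $P_i$ is prime since $T/P_i \cong k[x_{i+1},\dots,x_n]$ is a domain. By the very definition of a standardization, $P_i^\homog = (x_1^\homog,\dots,x_i^\homog) = (g_1,\dots,g_i)$, and the prime-standardization hypothesis forces each such extension to be prime in $T^\homog$; equivalently $T^\homog/(g_1,\dots,g_i)$ is a domain for every $i$. Properness of the top ideal $(g_1,\dots,g_n)$ follows because the standardization is graded, so each $g_i$ has $\deg g_i = \deg x_i > 0$ and thus lies in the irrelevant maximal ideal of $T^\homog$. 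Hence $g_1,\dots,g_n$ is a prime sequence.

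For $(\Leftarrow)$, suppose $g_1,\dots,g_n$ is a prime sequence. Since $(\Arg)^\homog \colon T \to T^\homog$ is a flat graded map out of the graded polynomial ring $T$, the target $T^\homog$ is free as a $T$-module; in particular $(\Arg)^\homog$ is injective and identifies $T$ with the subalgebra $R := k[g_1,\dots,g_n] \subseteq T^\homog$. Under this identification, for any prime $P \subseteq T$ the ideal $P^\homog = (f_1^\homog,\dots,f_m^\homog)$ coincides with the extension $PT^\homog$ of $P$ from $R$ to $T^\homog$. Applying Proposition~\ref{Q^h}(3) to the prime sequence $g_1,\dots,g_n$ and to the prime $P \subseteq R$ then yields that $P^\homog = PT^\homog$ is prime in $T^\homog$, so $(\Arg)^\homog$ is a prime standardization.

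The point requiring the most care is the identification $T \cong k[g_1,\dots,g_n]$ used in the reverse direction, namely that a flat graded standardization is automatically injective; this follows because $T^\homog$ is free over the graded local polynomial ring $T$ (flatness plus the graded-local structure of $T$, together with $T^\homog/\m T^\homog \neq 0$). A minor secondary subtlety is that Proposition~\ref{Q^h} is stated for algebraically closed $k$; to cover arbitrary $k$, one performs faithfully flat base change to $\overline{k}$, tracking primeness back down via contraction along $T^\homog \hookrightarrow T^\homog \otimes_k \overline{k}$.
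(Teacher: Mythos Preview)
Your argument is essentially the same as the paper's. For $(\Rightarrow)$ both proofs apply the prime-standardization hypothesis to the chain of primes $(x_1,\ldots,x_i)$ to conclude that each $(g_1,\ldots,g_i)$ is prime; the paper then also appeals to Lemma~\ref{primeseq}, which is actually redundant once one has checked all intermediate ideals directly as you do. For $(\Leftarrow)$ both proofs invoke Proposition~\ref{Q^h}(3), and your explicit justification of the identification $T\cong k[g_1,\ldots,g_n]$ via flatness (hence injectivity) of the standardization is a point the paper leaves implicit.

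One genuine issue: your proposed base-change fix for the algebraically-closed hypothesis in Proposition~\ref{Q^h} does not work as written. A prime $P\subset T$ need not remain prime after tensoring with $\overline{k}$ (for instance $(x^2+1)\subset\mathbb{R}[x]$), so you cannot simply apply Proposition~\ref{Q^h} over $\overline{k}$ to $P\otimes_k\overline{k}$ and then contract. Nor is it clear that the prime-sequence property of $g_1,\ldots,g_n$ survives base change, for the same reason. The paper's own proof does not address this gap either; in practice the surrounding section works over an algebraically closed field (cf.\ Construction~\ref{primeH} and Theorem~\ref{LJP}), so the cleanest resolution is simply to carry that hypothesis into the proposition rather than attempt a descent argument.
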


\begin{proof} The ``if'' direction follows from Proposition~\ref{Q^h}.  For the ``only if'' direction, suppose $(\Arg)^\homog$ is a prime standardization.  For every $1\leq i \leq t$, by Prop.~\ref{Q^h}(3) we have $(g_1,\ldots,g_i) = (x_1,\ldots,x_i)^\homog$ is a prime ideal thus, by Lemma \ref{primeseq}, $g_1,\ldots,g_n$ is a prime sequence.
\end{proof}

By our definition, step-by-step homogenization is not a prime standardization since nonlinear monomials do not form a prime sequence. We now show that there is always a choice of prime standardization that, unlike step-by-step homogenization, preserves the codimension of the singular locus of any ideal.  First we fix a chosen prime standardization.

\begin{Construction}\label{primeH}
Let $T=k[t_1,\ldots,t_n]$ be a positively graded polynomial ring over an algebraically closed field $k$ with $\deg(t_i)=d_i \in \mathbb{Z}_+$. 
Set $$W=\{w_{i,j,\ell}\,|\,1 \le i \le n, 0 \le j \le n, 1 \le \ell \le d_i \},$$ a set of new variables.  Set $T^{\shomog} \coloneqq k[W]$ and let $F_i = \sum_{j = 0}^n \prod_{\ell = 1}^{d_i} w_{i,j,\ell} \in T^{\shomog}$, where we define $\deg(w_{i,j,\ell}) = 1$ for all $i,j,\ell$.  Define the graded map of rings $(\Arg)^\homog:T \to T^\homog$ by setting $t_i^\homog = F_i$.  Since each $F_i$ is irreducible, say by Eisenstein's criterion, and since the variables appearing in $F_i$ are disjoint from those of $F_j$ for $i \neq j$, it follows from Lemma~\ref{primeseq} that $F_1,\ldots,F_n$ form a prime sequence.
\end{Construction}

\begin{Example}
Let $T = k[x_1,x_2,x_3]$, where $\deg(x_i) = i$ for $i = 1,2,3$.  The ideal $I = (x_1^2 - x_2, x_1^3 - x_3)$ of $T$ is then homogeneous.  The prime \straightensp $I^\shomog$ of $I$ from Construction~\ref{primeH} is then generated by the following two elements\\

\begin{tabular}{lll}
$f$ &$=$ &$(w_{1,0,1} + w_{1,1,1} + w_{1,2,1} + w_{1,3,1})^2$ \\
&&$- \,(w_{2,0,1}w_{2,0,2} + w_{2,1,1}w_{2,1,2} + w_{2,2,1}w_{2,2,2} + w_{2,3,1}w_{2,3,2}),$\\
\\
$g$ &=  &$(w_{1,0,1} + w_{1,1,1} + w_{1,2,1} + w_{1,3,1})^3  - (w_{3,0,1}w_{3,0,2}w_{3,0,3} $\\
&&$+ \,w_{3,1,1}w_{3,1,2}w_{3,1,3} + w_{3,2,1}w_{3,2,2}w_{3,2,3} + w_{3,3,1}w_{3,3,2}w_{3,3,3}).$
\end{tabular}
\end{Example}

\noindent By convention, we set the height of the unit ideal to be $\h((1)) = \infty$.

\begin{Lemma}\label{partials} For the \straightensp defined in Construction~\ref{primeH}, the ideal $$\left(\frac{\partial}{w_{i,j,\ell}}(F_i)\,|\, 0 \le j \le n, 1 \le \ell \le d_i\right)$$ has height at least $n+1$.
\end{Lemma}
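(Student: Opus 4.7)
The plan is to reduce the problem to a fixed index $j$ by exploiting the disjointness of the variable sets $\{w_{i,j,\ell}\}_\ell$ for different $j$'s, and then to compute the height of a single ``piece'' via a quick minimal prime analysis.

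First I would observe that $F_i = \sum_{j=0}^n \prod_{\ell=1}^{d_i} w_{i,j,\ell}$ involves only the variables $w_{i,j,\ell}$ (with $i$ fixed), so the same is true of every partial, and compute them directly as
\[\frac{\partial F_i}{\partial w_{i,j,\ell}} \;=\; \prod_{\ell' \ne \ell} w_{i,j,\ell'}.\]
The case $d_i = 1$ is immediate since each partial then equals $1$, giving the unit ideal (of infinite height). For $d_i \ge 2$, the generators split naturally into $n+1$ groups indexed by $j$: for each $j$, let $J_j$ be the ideal of the subring $R_j \coloneqq k[w_{i,j,1}, \ldots, w_{i,j,d_i}]$ generated by the $d_i$ squarefree monomials of degree $d_i - 1$ in these variables.

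The main step, and what I expect to be the main obstacle (though a mild one), is to show that $\h(J_j) = 2$ for each $j$. I would do this by identifying $\Min(J_j)$ directly: since $J_j$ is a squarefree monomial ideal, a prime $\p \subset R_j$ contains $J_j$ iff, for every $\ell$, at least one variable $w_{i,j,\ell'}$ with $\ell' \ne \ell$ lies in $\p$. A short combinatorial check shows this is equivalent to $\p$ containing at least two of $w_{i,j,1}, \ldots, w_{i,j,d_i}$. Hence the minimal primes of $J_j$ are exactly the $\binom{d_i}{2}$ ideals $(w_{i,j,a}, w_{i,j,b})$ with $a < b$, each of height $2$. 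Equivalently, one can read this off from the Stanley--Reisner complex associated to $J_j$.

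Finally, since the variable sets $\{w_{i,j,\ell} : 1 \le \ell \le d_i\}$ are pairwise disjoint for different $j$'s, the ideal of all partials equals $\sum_{j=0}^n J_j$ inside the subring $k[w_{i,j,\ell} : j,\ell]$. Heights add for ideals supported in disjoint variable sets, so this ideal has height $\sum_{j=0}^n \h(J_j) = 2(n+1)$; extending to $T^\homog$ adds only free variables and does not change the height. This gives the desired bound $2(n+1) \ge n+1$. Everything outside the minimal prime identification for $J_j$ is formal, and the bound we obtain is in fact considerably stronger than what the lemma requires.
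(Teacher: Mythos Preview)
Your argument is correct and actually yields a sharper bound than the lemma requires. The paper's proof takes a much shorter route, though: rather than analyzing the full ideal of partials, it simply extracts one partial per index $j$, namely $\partial F_i/\partial w_{i,j,1} = \prod_{\ell=2}^{d_i} w_{i,j,\ell}$ for $0 \le j \le n$. These $n+1$ monomials are expressed in pairwise disjoint sets of variables, hence form a regular sequence, and the height bound $\ge n+1$ follows immediately. Your approach instead computes the exact height of the full ideal of partials by identifying the minimal primes of each block $J_j$; this is more work (the Stanley--Reisner/combinatorial step) but it yields the precise value $2(n+1)$ rather than merely $n+1$. So the paper's argument is more economical for the stated purpose, while yours gives strictly more information.
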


\begin{proof} The generators of the form $\frac{\partial}{\partial_{w_{i,j,1}}}(F_i) = \prod_{\ell = 2}^{d_i} w_{i,j,k}$ for $0 \le j \le n$ constitute a regular sequence, as they are expressed in disjoint sets of variables. % Thus $\h(J) \ge n+1$.
\end{proof}

We adopt the notation $\codim(\Sing(X)):=\codim_X(\Sing(X))$. We say that an ideal is {\em unmixed} if all its associated primes have the same height.

\begin{Theorem}\label{LJP} Let $T$ be a positively graded polynomial ring over an algebraically closed field $k$, let $I$ be any homogeneous equidimensional ideal of $T$. Assume either ${\rm char}(k)=0$ or ${\rm char}(k)=p>0$ does not divide the degree of any minimal generator of $I$. Denote by $(\Arg)^\shomog$ the prime \straightensp in Construction~\ref{primeH}. 

For $X = \Proj (T/I)$ and $X^{\homog} = \Proj (T^{\homog}/I^{\homog})$,
$$\codim_X(\Sing X)=\codim_{X^{\homog}}(\Sing X^{\homog})$$ and there is a bijection between $\Min(\Sing(I))$ and the minimal primes of $\Sing(I^\homog)$ of height at most $\dim(T)$.\\
\end{Theorem}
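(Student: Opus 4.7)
The plan is to apply the Jacobian criterion after computing $\Jac(I^\homog)$ explicitly through the chain rule, and then compare the resulting height behavior with that of $\Jac(I)$. Set $h = \h(I)$; by Proposition~\ref{Q^h}(4), $I^\homog$ is also equidimensional of height $h$, so the singular loci are cut out scheme-theoretically by $I + K$ in $T$ and by $I^\homog + I_h(\Jac(I^\homog))$ in $T^\homog$, where $K := I_h(\Jac(I))$.

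Because each $f_i$ lies in $T\cong k[F_1,\ldots,F_n]$, the chain rule gives
$$\frac{\partial f_i^\homog}{\partial w_{j,k,\ell}} \;=\; \left(\frac{\partial f_i}{\partial t_j}\right)^\homog\cdot \frac{\partial F_j}{\partial w_{j,k,\ell}},$$
so every row of $\Jac(I^\homog)$ indexed by a variable $w_{j,*,*}$ is a $T^\homog$-scalar multiple of the $j$-th row of $\Jac(I)^\homog$. Two rows from the same $j$-block are then proportional, which forces any nonzero $h \times h$ minor of $\Jac(I^\homog)$ to select rows from $h$ distinct $j$-blocks, say indexed by $J = \{j_1,\ldots,j_h\}\subseteq[n]$; such a minor factors as a product of $h$ first-order partials of the $F_{j_a}$'s times the corresponding $h$-minor of $\Jac(I)^\homog$ using rows in $J$. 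Collecting contributions over all $J$ yields
$$I_h(\Jac(I^\homog)) \;=\; \sum_{|J|=h} K_J^\homog\cdot P_J,$$
where $K_J$ is the ideal of $h$-minors of $\Jac(I)$ that use only rows indexed by $J$, and $P_J = \prod_{j\in J} P_j$ with $P_j$ the ideal of partials of $F_j$.

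Two height estimates then drive the comparison. Since $F_1,\ldots,F_n$ is a regular sequence in $T^\homog$, the Koszul complex computes $\operatorname{Tor}_1^T(k,T^\homog) = 0$, so $T^\homog$ is flat over $T$; flatness together with Proposition~\ref{Q^h}(3) yields $\h(JT^\homog)=\h(J)$ for every $T$-ideal $J$, and in particular $\h(I^\homog + K^\homog) = \h(I + K)$. On the other hand, by Lemma~\ref{partials}, $\h(P_j) \geq n+1$; since the $P_j$ live in pairwise disjoint variable sets, $\h(P_J) \geq h(n+1)$, so every component of $V(I^\homog + P_J)$ has codimension at least $hn$ in $X^\homog$, which strictly exceeds the bound $n-h \geq \codim_X \Sing X$ whenever $h \geq 1$.

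Finally, I dissect the minimal primes of $\Sing(I^\homog)$. For a minimal prime $\q$, primality applied to each inclusion $K_J^\homog \cdot P_J \subseteq \q$ forces $K_J^\homog \subseteq \q$ or $P_J \subseteq \q$ for every $J$. If some $P_J \subseteq \q$, then $P_j \subseteq \q$ for some $j$ and $\h(\q) \geq n+1$. Otherwise $K_J^\homog \subseteq \q$ for every $J$, so $K^\homog \subseteq \q$, and a standard minimality argument (any strictly smaller prime of $I^\homog + K^\homog \supseteq \Sing(I^\homog)$ would violate minimality of $\q$) identifies $\q$ with a minimal prime of $I^\homog + K^\homog$. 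Thus the minimal primes of $\Sing(I^\homog)$ of height at most $n = \dim T$ are precisely those of $I^\homog + K^\homog$, which by Proposition~\ref{Q^h}(2) correspond height-preservingly to $\Min(\Sing(I))$. Because the smallest-codimension component of $\Sing(X^\homog)$ is attained among these low-height primes, the equality $\codim_X \Sing X = \codim_{X^\homog} \Sing X^\homog$ follows. The main obstacle I anticipate is extracting the decomposition $I_h(\Jac(I^\homog)) = \sum_J K_J^\homog P_J$ from the chain-rule computation and checking that same-block rows kill every would-be higher-rank contribution; once that factorization is in hand, the minimal-prime analysis falls out cleanly from Proposition~\ref{Q^h} and Lemma~\ref{partials}.
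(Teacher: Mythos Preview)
Your argument is correct and reaches the same conclusion as the paper, but by a genuinely different route. The paper proceeds by \emph{induction on the number of variables being replaced}: at each step only a single variable $y$ of degree $d$ is sent to the corresponding $F$, the Jacobian of $I^\homog$ is split as $\matD^\homog$ stacked over copies of $\partial_{w_{j,\ell}}(F)\cdot\matE^\homog$, and one proves the single-variable Claim
\[
I_c(\Jac(I^\homog)) \;=\; I_c(\matD^\homog) + \bigl(\partial_{w_{0,1}}(F),\ldots,\partial_{w_{n,d}}(F)\bigr)\cdot I_c\bigl((\Jac(I))^\homog\bigr),
\]
then iterates. You instead replace all variables simultaneously and obtain the finer all-at-once decomposition
\[
I_h(\Jac(I^\homog)) \;=\; \sum_{|J|=h} K_J^\homog\cdot P_J,
\]
which packages the same chain-rule linear algebra (proportional rows within a $j$-block force nonzero $h$-minors to use $h$ distinct blocks) in one shot. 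Your route avoids the inductive bookkeeping and makes the role of Lemma~\ref{partials} and Proposition~\ref{Q^h} transparent in a single dichotomy on each minimal prime~$\q$; the paper's inductive step is lighter notationally at each stage but has to be repeated. Both approaches ultimately rest on the same two ingredients: the chain-rule factorization of the new Jacobian rows, and the height bound $\h(P_j)\ge n+1$.

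One small correction: your aside ``since the $P_j$ live in pairwise disjoint variable sets, $\h(P_J)\ge h(n+1)$'' is not right as stated, because $P_J=\prod_{j\in J}P_j$ is a \emph{product}, so $V(P_J)=\bigcup_{j\in J}V(P_j)$ and $\h(P_J)=\min_{j\in J}\h(P_j)\ge n+1$, not $h(n+1)$. This does not affect your main argument, which only uses that $P_J\subseteq\q$ forces some $P_j\subseteq\q$ and hence $\h(\q)\ge n+1$; but the subsequent sentence about components of $V(I^\homog+P_J)$ having codimension at least $hn$ should be deleted or corrected accordingly.
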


\begin{proof} 
We first prove the case where ${\rm char}(k)=0$.  Let $T = k[x_1,\ldots,x_{n-1},y]$.  
By induction we may focus on the case where we replace a single variable $y$ of degree $d$ by $F = \sum_{j = 0}^{n} \prod_{\ell = 1}^{d} w_{j,k}$ and leave all other variables fixed.  Let $I = (g_1,\ldots,g_s)$ be a homogeneous ideal of $T$ and let $I^\homog$ denote the ideal generated by the images $G_i = g_i^\homog$ of the $g_i$ under the map $(\Arg)^\homog:T \to T^\homog = k[z_1,\ldots,z_N,w_{0,1},\ldots,w_{n,d}]$ defined by $z_i \longmapsto z_i$ and $y \longmapsto F$.  
Let $c=\h(I)$. By Lemma \ref{Q^h}(3) we know that $c=\h(I^\homog)$ as well. 
By the Jacobian criterion, $\Sing(X)$ and $\Sing(X^{\homog})$ are defined, up to radical, by $\h(I_c(\Jac(I)))$ and $\h(I_c(\Jac(I^\homog)))$, respectively. Write

\be
\Jac(g_1,\ldots,g_s)= \left(\begin{array}{l || cccc}
		    & g_1 & g_2 & \ldots & g_s \\  \hline\hline
\partial_{z_1} & \partial_{z_1}(g_1) & 	\partial_{z_1}(g_2)	& \ldots 	& \partial_{z_1}(g_s)		\\
\partial_{z_2} & \partial_{z_2}(g_1) & 	\partial_{z_2}(g_2)	& \ldots 	& \partial_{z_2}(g_s)		\\
\vdots  & \vdots & & & \vdots  \\	
\partial_{z_{n-1}} & \partial_{z_{n-1}}(g_1) & 	\partial_{z_{n-1}}(g_2)	& \ldots 	& \partial_{z_{n-1}}(g_s)		\\
\partial_{y} &  \partial_{y}(g_1) & 	\partial_{y}(g_2)	& \ldots 	& \partial_{y}(g_s)
	\end{array}\right)
\ee
Let $\matE$ be the row vector $( \partial_{y}(g_1) \;	\partial_{y}(g_2) \; \ldots \; \partial_{y}(g_s))$ and let $\matD$ be the $(n-1) \times s$ submatrix of $\Jac(g_1,\ldots,g_s)$ obtained by removing $\matE$ from $\Jac(g_1,\ldots,g_s)$ so that
\be
\Jac(I) = \left( \begin{array}{c}
\matD \\
\hline
\matE
\end{array}\right).
\ee
By the chain rule, the Jacobian matrix of $I^\homog$ is
\be
\Jac(G_1,\ldots,G_s) = \left( \begin{array}{c}
\matD^\homog \\
\partial_{w_{0,1}}(F) \cdot \matE^\homog\\
\partial_{w_{0,2}}(F) \cdot \matE^\homog\\
\vdots\\
\partial_{w_{n,d}}(F) \cdot \matE^\homog
\end{array}\right),
\ee
where $\matD^\homog$ and $\matE^\homog$ are obtained by applying $(\Arg)^\homog$ to every entry of $\matD$ and $\matE$,  and $\partial_{y_i}(F) \cdot \matE^\homog$ is the scalar product of $\partial_{y_i}(F)$ and $\matE^\homog$.\\

\noindent{\bf Claim.} One has
\be
I_c(\Jac(I^\homog)) = I_c(\matD^\homog) + (\partial_{w_{0,1}}(F),\ldots,\partial_{w_{n,d}}(F)) \cdot I_c((\Jac(I))^\homog).
\ee

{\em Proof of Claim.} Write $\matE^\homog=(e_1,\ldots,e_s)$. Let $H$ be a $c$-minor of $\Jac(I^\homog)$. Observe that if $H$ is obtained by taking at least two of the last $t$ rows. In particular, we have 
\begin{align*}
H &= \det \left( \begin{array}{cccc}
\vdots & && \vdots \\
\partial_{w_{j,\ell}}(F) e_1 & \partial_{w_{j,\ell}}(F) e_2 & \ldots & \partial_{w_{j,\ell}}(F) e_s\\
\vdots & && \vdots \\
\partial_{w_{j',\ell'}}(F) e_1 & \partial_{w_{j',\ell'}}(F) e_2 & \ldots & \partial_{w_{j',\ell'}}(F) e_s\\
\vdots & && \vdots \\
\end{array}\right) \\
&= \partial_{w_{j,\ell}}(F) \partial_{w_{j',\ell'}}(F) \det\left( \begin{array}{cccc}
\vdots & && \vdots \\
 e_1 & e_2 & \ldots & e_s\\
\vdots & && \vdots \\
 e_1 & e_2 & \ldots & e_s\\
\vdots & && \vdots \\
\end{array}\right) =0.
\end{align*}
Therefore, every non-zero $c \times c$ minor  of $\Jac(I^\homog)$ involves at most one of the last $(n+1)d$ rows, equivalently, 
\bee\label{I_c}
I_c(\Jac(I^\homog)) = \sum_{j=0}^n \sum_{\ell = 1}^{d} I_c\left( \begin{array}{c}
\matD^\homog \\
\partial_{w_{j,\ell}}(F)\cdot \E^\homog
\end{array}\right).
\eee
Observe that if $H$ is a $c \times c$ minor  of $\left( \begin{array}{c}
\D^\homog \\
\partial_{w_{j,\ell}}(F)\cdot \matE^\homog
\end{array}\right)$ not involving   the last row, then $H\in I_c(\matD^\homog)$, while if $H$ involves the last row of the above matrix, then $H= \partial_{w_{j,\ell}}(F) \cdot \det (\Theta)$, where $\Theta$ is a $c$ by $c$ submatrix of $\left( \begin{array}{c}
\matD^\homog \\
\matE^\homog
\end{array}\right)$ that involves the last row. Since $\partial_{w_{j,\ell}}(F) I_c(\matD^\homog) \subseteq I_c(\matD^\homog)$, we can write 

\be
I_c\left( \begin{array}{c}
\matD^\homog \\
\partial_{w_{j,\ell}}(F)\cdot \matE^\homog
\end{array}\right) = I_c(\matD^\homog) +\partial_{w_{j,\ell}}(F) \cdot I_c\left( \begin{array}{c}
\matD^\homog \\
\matE^\homog
\end{array}\right).
\ee
Substituting the above in Equation~\eqref{I_c} for every $i$, we obtain
\be
\begin{array}{ll}
I_c(\Jac(I^\homog))  & = I_c(\matD^\homog) + \sum_{j=0}^n \sum_{\ell=1}^d \left(\partial_{w_{j,\ell}}(F) \cdot I_c\left( \begin{array}{c}
\matD^\homog \\
\matE^\homog
\end{array}\right) \right)  \\			& =  I_c(\matD^\homog) + (\partial_{w_{0,1}}(F), \ldots,\partial_{w_{n,d}}(F)) \cdot I_c\left( \begin{array}{c}
\matD^\homog \\
\matE^\homog
\end{array}\right) \\
			& =  I_c(\matD^\homog) + (\partial_{w_{0,1}}(F), \ldots,\partial_{w_{n,d}}(F)) \cdot I_c((\Jac(I))^\homog)
\end{array}
\ee
proving the claim. 

Let $\Min(I_c(\Jac(I))=\{\p_1,\ldots,\p_r\}$ be the minimal primes in $T$ of $\Sing(I)$. By Lemma \ref{Q^h}(2) each $\p_i^{\homog}$ is prime. We claim that $\{\p_i^{\homog}\,\mid\,i=1,\ldots,r\}$ are the minimal primes of $I_c(\Jac(I^{\homog}))$ of height at most $n$. To this end, we first observe that $I_c(\Jac(I^{\homog}))\subseteq \p_i^{\homog}$ -- this follows from the claim and the fact that $\p_i^{\homog}$ contains both $I_c(D^{\homog})$ and $I_c(\Jac(I)^{\homog})$.

Next, we show that any prime containing $I_c(\Jac(I^{\homog})$ has either height at least $n+1$ or it contains one of the $\p_i^{\homog}$. This will conclude the proof.

So, let $\q$ be a minimal prime ideal with $ I_c(\Jac(I^\homog)) \subseteq \q$.  By the claim,
\[(\partial_{w_{0,1}}(F), \ldots,\partial_{w_{n,d}}(F)) \cdot I_c((\Jac(I))^\homog) \subseteq \q.\]
As $(\partial_{w_{0,1}}(F), \ldots,\partial_{w_{n,d}}(F)) \subseteq \q$, by Lemma~\ref{partials}, $\h(\q)\geq n+1>\dim(T)$.  If $(\partial_{w_{0,1}}(F), \ldots,\partial_{w_{n,d}}(F)) \not \subseteq \q$, then, since $\q$ is prime, the ideal $\q$ contains $(I_c(\Jac(I)))^\homog  = I_c((\Jac(g_1,\ldots,g_s))^\homog)=\p_1^{\homog}\cap \p_2^{\homog}\cap \cdots \cap \p_r^{\homog}$, where the rightmost equality follows by Lemma \ref{Q^h}(4). Then $\q$ contains one of the $\p_i^{\homog}$. It follows that each of the $\p_i^{\homog}$ is a minimal prime of $I_c(\Jac(I^{\homog}))$ and these are the only minimal primes of height at most $n=\dim(T)$.

When ${\rm char}(k)=p>0$ and $p$ does not divide the degree of a minimal generator of $I$, the Jacobian criterion states that the singular locus of $I$ is defined, up to radical, by $I+I_c(\Jac(I))$. The proof follows by a similar argument with the following differences: let $\{\p_1,\ldots,\p_r\}$ be the minimal primes of $I+I_c(\Jac(I))$; in the last part of the proof, we let $\q$ be a prime ideal containing $I^{\homog} + I_c(\Jac(I^\homog))$, and after finding that $(I_c(\Jac(I)))^\homog \subseteq \q$ we have $$I^{\homog} + (I_c(\Jac(I)))^\homog = (I+I_c(\Jac(I)))^\homog\subseteq \q,$$ thus $\q$ contains a minimal prime of $(I+I_c(\Jac(I)))^\homog$. 

\end{proof}

We now apply the preceding theorem to the defining prime ideal of the Rees-like algebra of a homogeneous ideal. Combining it with Theorem~\ref{thm:MinPrimeJacobian} we obtain:

\begin{Corollary}\label{RLhomog} Let $\RLP(I) \subset T$ be the defining prime ideal of $\R(I)$ for some homogeneous ideal $I \subset S = k[x_1,\ldots,x_n]$ and suppose $k$ is algebraically closed and either ${\rm char}(k)=0$ or ${\rm char}(k)=p>0$ does not divide the degree of any minimal generator of $I$.  

Using the \straightensp from Construction~\ref{primeH}, $\RLP(I)^\shomog$ is a nondegenerate, homogeneous prime ideal in a standard graded polynomial ring $T^\shomog$  which defines a projective variety $X$ such that $\codim_X(\Sing X) =\h(I)$.  
\end{Corollary}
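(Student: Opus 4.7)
The plan is to combine Theorem~\ref{thm:MinPrimeJacobian} with Theorem~\ref{LJP}, applied to the defining prime ideal $\RLP(I) \subset T$ of the Rees-like algebra $\R(I)$; these two preceding results were engineered to fit together in this way.

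First, since $\RLP(I)$ is prime in the nonstandard graded polynomial ring $T$, it is in particular equidimensional. Theorem~\ref{thm:MinPrimeJacobian} then yields that $Y := \Proj \R(I) = \Proj T/\RLP(I)$ satisfies $\codim_Y(\Sing Y) = \h(I)$, and also gives an explicit description of the minimal primes of its singular locus.

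Second, form the prime standardization $(\Arg)^\shomog \colon T \to T^\shomog$ from Construction~\ref{primeH}. Since $\RLP(I)$ is prime, Proposition~\ref{Q^h}(3) immediately implies $\RLP(I)^\shomog$ is a prime ideal in the standard graded polynomial ring $T^\shomog$. For nondegeneracy, I would observe that every generator of $\RLP(I)$ listed in Theorem~\ref{RLbasics} has degree at least $2$ in the nonstandard grading of $T$: the syzygetic generators $r_j = \sum_i c_{ij} y_i$ have degree $\deg(c_{ij}) + \deg(f_i) + 1 \geq 2$, and the generators $y_iy_j - z f_i f_j$ have degree $\geq 4$. Since standardization is a graded map, the corresponding generators of $\RLP(I)^\shomog$ have standard degree at least $2$, so $\RLP(I)^\shomog$ contains no linear form. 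Then apply Theorem~\ref{LJP} with $\RLP(I)$ in place of $I$, and $Y$, $X$ playing the roles of $X$, $X^\shomog$; this directly yields $\codim_X(\Sing X) = \codim_Y(\Sing Y) = \h(I)$, where $X = \Proj T^\shomog/\RLP(I)^\shomog$.

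The main obstacle is really a bookkeeping matter rather than a mathematical one: reconciling the characteristic hypothesis of Theorem~\ref{LJP} (phrased in terms of the minimal-generator degrees of the ideal being standardized, here $\RLP(I)$) with the hypothesis of the corollary (phrased in terms of degrees of minimal generators of $I$ itself). The degree formulas in Theorem~\ref{RLbasics} express the relevant degrees of $\RLP(I)$-generators as sums of the form $\deg(f_i) + \deg(c_{ij}) + 1$ and $\deg(f_i) + \deg(f_j) + 2$, so a short check shows that the condition on $I$ is enough to guarantee the hypothesis of Theorem~\ref{LJP} for $\RLP(I)$; in any case, the $p \gg 0$ qualifier makes this automatic.
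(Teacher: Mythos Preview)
Your approach is exactly the one the paper intends: the corollary is stated immediately after Theorem~\ref{LJP} with the remark ``Combining it with Theorem~\ref{thm:MinPrimeJacobian} we obtain,'' and no further proof is given. You have simply spelled out the combination, including the nondegeneracy check (which the paper leaves implicit).

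One caveat on your final paragraph: the claim that the characteristic hypothesis on $I$ automatically implies the hypothesis of Theorem~\ref{LJP} for $\RLP(I)$ is not correct as stated. For instance, with $p=5$ and $\deg(f_1)=4$ one has $p \nmid \deg(f_1)$ but $\deg(y_1^2 - zf_1^2) = 10$ is divisible by $p$. This is really a slight imprecision in the paper's own hypothesis rather than a flaw in your argument; in characteristic $0$ (or for $p \gg 0$, as in the Theorem~B formulation in the introduction) there is no issue, and the paper appears to be content with that level of precision here.
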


\section{Application: Smooth hyperplane sections}

It is natural to ask if Rees-like algebras and \straighteningsp are sufficient to give a smooth counterexample to 
the Eisenbud-Goto conjecture.
We exploit the work so far to settle this in the negative, giving further evidence for Equation~\eqref{eq:CMreg} in the smooth case. More precisely, we show that a nonzero, homogeneous ideal $I \subset S$ is Cohen-Macaulay if and only if a prime \straightensp of its Rees-like algebra $\R(I)$, which preserves the codimension of the singular locus has a hyperplane section that is both smooth and preserves the original graded Betti numbers.  The rest follows by giving a sufficient bound on the regularity of Cohen-Macaulay ideals. For simplicity of exposition, the reader may focus only on the prime \straightensp from Construction~\ref{primeH}. 

\begin{Theorem}\label{smooth} 
Let $k$ be a field with ${\rm char}(k)=0$, let $S=k[x_1,\ldots,x_n]$, and let $I$ be a proper homogeneous $S$-ideal. Let $X\subseteq \mathbb P^N$ denote the projective variety corresponding to the prime standardization from Construction~\ref{primeH} applied to the Rees-like algebra of $I$.  The following two conditions are equivalent:
\begin{enumerate}[(i)]
\item There exists a regular sequence of general hyperplane sections of $X$ such that the resulting variety is smooth;
\item $S/I$ is Cohen-Macaulay.
\end{enumerate}
\end{Theorem}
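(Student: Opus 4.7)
My plan is to combine three ingredients: the singular-locus codimension (Corollary~\ref{RLhomog}), the depth and projective-dimension formulas of Theorem~\ref{RLbasics}, and the invariance of graded Betti numbers under prime standardization (Proposition~\ref{Q^h}(1)), with Bertini's theorem in characteristic zero as the main geometric tool. Setting $R = T^\homog/\RLP(I)^\homog$ so that $X = \Proj R$, Corollary~\ref{RLhomog} gives $\dim \Sing X = \dim X - \h(I)$, and Proposition~\ref{Q^h}(1), Theorem~\ref{RLbasics} and Auslander--Buchsbaum together yield the identity
\begin{equation*}
\depth R - \dim R \;=\; 1 - \pd_S(S/I),
\end{equation*}
since $\pd_{T^\homog}(R) = \pd_S(S/I) + m - 1$ and $\h(\RLP(I)^\homog) = m = \mu(I)$. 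These two relations govern both the length of the admissible regular sequence and the number of cuts needed to eliminate the singular locus.

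For $(\text{ii}) \Rightarrow (\text{i})$: when $S/I$ is Cohen--Macaulay, $\pd_S(S/I) = \h(I)$, so the identity gives $\depth R = \dim X + 2 - \h(I)$. I would take $r = \dim X - \h(I) + 1 = \dim \Sing X + 1$ general linear forms $\ell_1,\ldots,\ell_r$; since $r \le \depth R$ they form a regular sequence on $R$, and iterated Bertini in characteristic zero preserves integrality at every intermediate stage and yields $\Sing(X \cap H_1 \cap \cdots \cap H_r) = \Sing X \cap H_1 \cap \cdots \cap H_r$. The right-hand side is empty for generic $\ell_i$'s because $r > \dim \Sing X$, so the resulting variety is smooth.

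For $(\text{i}) \Rightarrow (\text{ii})$: if such a smooth cut exists, the Jacobian criterion combined with Corollary~\ref{RLhomog} forces $r \ge \dim X - \h(I) + 1$, and regularity of the sequence forces $r \le \depth R$; substituting into the identity gives $\pd_S(S/I) \le \h(I) + 1$. The crux is to exclude the borderline ``almost Cohen--Macaulay'' case $\pd_S(S/I) = \h(I) + 1$: there $r = \depth R$ is forced, so $R/(\ell_1,\ldots,\ell_r)$ has depth zero with the irrelevant ideal as an embedded associated prime, and a careful analysis of the primary decomposition of this quotient---equivalently, of the augmented Jacobian matrix at the threshold---is required to detect a residual singularity and thereby force $\pd_S(S/I) = \h(I)$. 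This is the main technical obstacle in the argument.

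For the Eisenbud--Goto bound on the resulting smooth $Y$: cutting by a regular sequence of linear forms preserves both the Castelnuovo--Mumford regularity and the degree, so $\reg Y = \reg R$, $\deg Y = \deg R$, and $\codim Y = \codim X = m$. Theorem~\ref{RLbasics}(2)--(3) express these quantities in terms of $\reg(S/I)$, the $\deg(f_i)$, and $\mu(I)$, and using the classical bound $\reg(S/I) \le \deg(S/I) - \h(I) + 1$ for Cohen--Macaulay ideals, a direct substitution then yields $\reg Y \le \deg Y - \codim Y + 1$.
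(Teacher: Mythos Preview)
Your overall strategy---combining Corollary~\ref{RLhomog}, Theorem~\ref{RLbasics}, Proposition~\ref{Q^h}(1), Auslander--Buchsbaum, and Bertini---is exactly the paper's. The direction $(\mathrm{ii})\Rightarrow(\mathrm{i})$ is fine and in fact lands on the same number of cuts the paper uses: when $S/I$ is Cohen--Macaulay your $r=\dim\Sing X+1$ equals $\depth R-1$.

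The gap is in $(\mathrm{i})\Rightarrow(\mathrm{ii})$. You bound the length of the regular sequence only by $r\le\depth R$, obtain $\pd_S(S/I)\le\h(I)+1$, and then declare a ``borderline case'' requiring a delicate analysis you do not carry out. There is no borderline case. The word \emph{variety} in condition~(i) forces the quotient ring $R/(\ell_1,\dots,\ell_r)$ to remain a domain. If $S/I$ is not Cohen--Macaulay then $\mu(I)\ge 2$, so $\pd_S(S/I)\ge 2$ and your identity $\depth R-\dim R=1-\pd_S(S/I)$ shows $R$ is not Cohen--Macaulay; cutting by $r=\depth R$ general linear forms then produces a ring of depth~$0$ and positive Krull dimension, which has the irrelevant ideal as an embedded prime and hence is not a domain. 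Thus the correct upper bound is $r\le\depth R-1$, and your two inequalities immediately give
\[
\dim X-\h(I)+1\;\le\;r\;\le\;\depth R-1\;=\;\dim X+1-\pd_S(S/I),
\]
i.e.\ $\pd_S(S/I)\le\h(I)$, with nothing left over. This is precisely the paper's argument: Bertini preserves primeness only through $\depth R-1$ general hyperplane sections, and the equivalence $\depth R-1>\dim\Sing X\iff\pd_S(S/I)\le\h(I)$ follows at once.

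Your final paragraph addresses Corollary~\ref{ACMBertini}, not the theorem you were asked about. There the paper does not invoke the Eisenbud--Goto bound for $S/I$; it uses instead the elementary estimate $\reg(S/I)\le\sum_i(d_i-1)$ for Cohen--Macaulay ideals (Lemma~\ref{CMreg}) together with the arithmetic inequality $\sum_i d_i\le\prod_i(d_i+1)-m$ (Lemma~\ref{ineq}).
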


\begin{proof} \sloppy Set $I = (f_1,\ldots,f_m) \subset S = k[x_1,\ldots,x_n]$.  
Let $\RLP(I)$ be the defining prime ideal of $T$ and let $T \to T^{\shomog}$ be the prime \straightensp defined in Construction~\ref{primeH}. By Bertini's theorem (cf. \cite{Fl}), we may factor out a regular sequence of $\depth(T^{\shomog}/\RLP(I)^\homog)-1$ general linear forms and preserve both the graded Betti numbers of $\RLP(I)^\homog$ and primeness.  Doing so reduces both the dimension of the associated projective variety and that of its singular locus by $\depth(T^{\shomog}/\RLP(I)^\homog)-1$.  Thus one obtains a smooth variety if and only if one has 
\[
\depth(R/\RLP(I)^\homog)-1 > \dim(\Sing \Proj (T^{\shomog}/\RLP(I)^\shomog)),
\]
or equivalently
\[
\dim(T^\homog) - \depth(R/\RLP(I)^\homog)+1 < \h(J)+ 1,\]
where $J$ is the defining ideal of $\Sing \Proj (T^{\shomog}/\RLP(I)^\shomog)$ in $T^{\homog}$.
By Corollary~\ref{RLhomog}, $\h(J) = m + \h(I)$. 
By the Auslander-Buchsbaum theorem, Theorem~\ref{RLbasics} and Proposition~\ref{Q^h} one has
$$
\dim(T^\homog) - \depth(R/\RLP(I)^\homog) = \pd(R/\RLP(I)^{\shomog}) = \pd(S/I) + m - 1.
$$  Thus, the above inequality holds if and only if 
$$\pd(S/I) + m < \h(I) + m + 1$$
or equivalently $\pd(S/I) \le \h(I),$
which occurs if and only if $S/I$ is Cohen-Macaulay.

\end{proof}

We recall that among all Cohen-Macaulay ideals $I$ generated by forms of fixed degrees, complete intersections have the largest regularity.

\begin{Lemma} \label{CMreg}$[$c.f. Huneke et. al. \cite[3.1]{HMNU}$]$ Let $S = k[x_1,\ldots, x_n]$ and $I$ be a homogeneous $S$-ideal such that $S/I$ is Cohen-Macaulay.  If $d_i = \deg(f_i)$, then $\reg(S/I) \le \sum_{i = 1}^m (d_i - 1)$.
\end{Lemma}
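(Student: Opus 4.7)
The plan is to bound $\reg(S/I)$ by passing to an artinian quotient and realizing it as a graded surjective image of a suitable complete intersection contained in $I$. Throughout we may assume $k$ is infinite (else replace $k$ by an infinite extension, which preserves regularity and generator degrees by flat base change).

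Set $c = \h(I)$. Since $S/I$ is Cohen--Macaulay of dimension $n - c$, general linear forms $\ell_1, \ldots, \ell_{n-c}$ form a regular sequence on $S/I$. Regularity is preserved under quotient by a linear nonzerodivisor (standard long-exact-sequence argument applied to $0\to M(-1)\xrightarrow{\ell} M\to M/\ell M\to 0$), so $\reg(S/I) = \reg(A)$ where $A := S/(I, \ell_1, \ldots, \ell_{n-c})$; and since $A$ is artinian, $\reg(A)$ equals the socle degree $\max\{t : A_t \neq 0\}$.

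Next I would inductively construct a regular sequence $g_1, \ldots, g_c \in I$ with $\deg g_j = d_{\sigma(j)}$ for some injection $\sigma \colon \{1, \ldots, c\} \hookrightarrow \{1, \ldots, m\}$. Given $g_1, \ldots, g_{j-1}$, the associated primes $P_1, \ldots, P_s$ of $(g_1, \ldots, g_{j-1})$ are finitely many homogeneous primes of height $j - 1 < c$, none of which contains $I$. For each $P_\alpha$ some generator $f_{k_\alpha}$ lies outside $P_\alpha$, and then $I_d \not\subseteq P_\alpha$ as soon as $d \geq d_{k_\alpha}$ (multiply $f_{k_\alpha}$ by a suitable monomial outside $P_\alpha$, available since $P_\alpha \subsetneq (x_1, \ldots, x_n)$). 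Choosing an unused index $\sigma(j)$ with $d_{\sigma(j)} \geq \max_\alpha d_{k_\alpha}$, graded prime avoidance applied to the $k$-vector space $I_{d_{\sigma(j)}}$ (valid over the infinite field $k$, since each $I_{d_{\sigma(j)}}\cap P_\alpha$ is a proper subspace) furnishes a generic $g_j \in I_{d_{\sigma(j)}}$ extending the regular sequence.

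Now let $J := (g_1, \ldots, g_c) \subseteq I$. Taking the $\ell_i$ sufficiently general so as to also be regular on $S/J$, the artinian quotient $B := S/(J, \ell_1, \ldots, \ell_{n-c})$ has socle degree $\sum_{j=1}^c(\deg g_j - 1)$, computed from the Koszul complex resolving $S/J$. Since $J \subseteq I$, the algebra $A$ is a graded quotient of $B$, so the socle degree of $A$ is at most that of $B$, yielding
\[
\reg(S/I) = \reg(A) \leq \reg(B) = \sum_{j=1}^c (d_{\sigma(j)} - 1) \leq \sum_{i=1}^m (d_i - 1),
\]
the last inequality since $\{d_{\sigma(j)}\}_{j=1}^c$ is a $c$-element subset of $\{d_1, \ldots, d_m\}$ and each $d_i \geq 1$. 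The principal technical obstacle is the inductive selection in the third paragraph: one must ensure that an unused index $\sigma(j)$ with the required degree condition is always available, which is a delicate combinatorial application of graded prime avoidance but standard over an infinite field.
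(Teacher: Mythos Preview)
The paper gives no proof of this lemma; it simply records the statement with a citation to \cite[3.1]{HMNU}. So there is nothing in the paper to compare your argument against directly.

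Your overall strategy---reduce modulo a linear system of parameters to an artinian quotient $A$, realize $A$ as a graded quotient of an artinian complete intersection $B=S/(J,\underline{\ell})$ with $J=(g_1,\ldots,g_c)\subseteq I$, and bound the top socle degree---is correct and is essentially the standard argument behind the cited result. The reduction to the artinian case, the computation $\reg(B)=\sum_j(\deg g_j-1)$, and the final chain of inequalities are all fine.

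The only soft spot is exactly the one you flag: as written, your inductive choice of $\sigma(j)$ with $d_{\sigma(j)}\ge\max_\alpha d_{k_\alpha}$ may fail if the large-degree indices have already been consumed. This is not merely ``delicate''; without a specific ordering it can genuinely get stuck. The clean fix is to commit in advance to the $c$ largest degrees, processed in increasing order. Sort $d_1\le\cdots\le d_m$ and take $\sigma(j)=m-c+j$. At step $j$ one needs $I_{d_{m-c+j}}\not\subseteq P_\alpha$ for each associated prime $P_\alpha$ of $(g_1,\ldots,g_{j-1})$, which has height $j-1$. By Krull's height theorem,
\[
\h\bigl((f_1,\ldots,f_{m-c+j})\bigr)\ \ge\ \h(I)-\bigl(m-(m-c+j)\bigr)\ =\ c-(c-j)\ =\ j,
\]
so $(f_1,\ldots,f_{m-c+j})\not\subseteq P_\alpha$; hence some $f_k$ with $d_k\le d_{m-c+j}$ avoids $P_\alpha$, and your ``multiply up'' argument then gives $I_{d_{m-c+j}}\not\subseteq P_\alpha$. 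Graded prime avoidance over the infinite field finishes the step. With this ordering specified, your proof is complete.
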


The main result of this section depends on the following elementary lemma whose proof is left to the reader. 

\begin{Lemma}\label{ineq} Let $d_1,\ldots,d_m$ be positive integers,  $$\sum_{i = 1}^m d_i \le \prod_{i = 1}^m (d_i + 1) -m.$$

\end{Lemma}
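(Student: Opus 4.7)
The plan is to substitute $a_i = d_i + 1$ and observe that since each $d_i \geq 1$, each $a_i \geq 2$. Since $\sum d_i + m = \sum(d_i+1)$, the desired inequality
$$\sum_{i=1}^m d_i + m \leq \prod_{i=1}^m(d_i+1)$$
becomes equivalent to
$$\sum_{i=1}^m a_i \leq \prod_{i=1}^m a_i \qquad \text{with each } a_i \geq 2.$$
This is the clean inequality I will actually prove.

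The proof will proceed by induction on $m$. The base case $m = 1$ is the trivial statement $a_1 \leq a_1$. For the inductive step, assume the inequality for $m-1$ and write
$$\sum_{i=1}^m a_i = \sum_{i=1}^{m-1} a_i + a_m \leq \prod_{i=1}^{m-1} a_i + a_m.$$
It then suffices to show $\prod_{i=1}^{m-1} a_i + a_m \leq a_m \prod_{i=1}^{m-1} a_i$, which rearranges to $(a_m - 1)\prod_{i=1}^{m-1} a_i \geq a_m$. Since $a_m \geq 2$ we have $a_m - 1 \geq 1$, so the left-hand side is at least $\prod_{i=1}^{m-1} a_i \geq 2^{m-1} \geq 2 \geq a_m/(a_m-1)$ — wait, more directly: $(a_m-1)\prod a_i \geq 1 \cdot 2 = 2 \geq a_m$ fails if $a_m$ is large, so let me phrase it properly: we need $\prod_{i=1}^{m-1} a_i \geq \frac{a_m}{a_m-1} = 1 + \frac{1}{a_m-1} \leq 2$, and $\prod_{i=1}^{m-1} a_i \geq 2$ since each factor is at least $2$ and $m \geq 2$. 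This closes the induction.

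There is no real obstacle here; the only subtle point is making sure the base case and the transition from $m-1$ to $m$ use the hypothesis $a_i \geq 2$ (equivalently $d_i \geq 1$) in the right place — specifically, the bound $\prod_{i=1}^{m-1} a_i \geq 2$ needs at least one factor, i.e., $m \geq 2$, so the inductive step is only invoked for $m \geq 2$ and the base case $m=1$ is handled separately.
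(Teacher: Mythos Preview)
Your argument is correct: the substitution $a_i=d_i+1\ge 2$ reduces the claim to $\sum a_i\le\prod a_i$, and your induction on $m$ goes through once you observe $(a_m-1)\prod_{i<m}a_i\ge 2(a_m-1)\ge a_m$. The paper itself gives no proof of this lemma (it is explicitly ``left to the reader''), so there is nothing to compare against; your approach is a standard and entirely adequate one, though you may wish to tidy the exposition by removing the mid-proof self-correction.
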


Here we show that any of the smooth hyperplane sections of Rees-like varieties described above satisfy the Eisenbud-Goto Conjecture \cite{EG} giving further evidence that it remains true for smooth varieties.

\begin{Corollary}\label{ACMBertini} Let $k$ be a field with ${\rm char}(k)=0$. Let $I = (f_1,\ldots,f_m) \subset S = k[x_1,\ldots,x_n]$ be a homogeneous ideal such that $S/I$ is Cohen-Macaulay.  Any of the smooth hyperplane sections of the prime \straightensp from Construction~\ref{primeH} applied to the Rees-like prime of $I$ described above satisfies Equation~\eqref{eq:CMreg}.

\end{Corollary}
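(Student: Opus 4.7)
The plan is to compute each of $\reg(S/P)$, $\deg(S/P)$, and $\h(P)$ for the defining prime $P$ of the smooth hyperplane section described in Theorem~\ref{smooth}, and then reduce the Eisenbud--Goto inequality $\reg(S/P) \le \deg(S/P) - \h(P)$ to the elementary bound of Lemma~\ref{ineq}.

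First, I would note that the smooth variety under consideration is obtained from $\Proj(T^\shomog/\RLP(I)^\shomog)$ by cutting with a regular sequence of general linear forms. Cutting by a regular sequence of linear forms preserves regularity, degree, and the codimension of the defining ideal in the (shrunken) ambient polynomial ring. Moreover, Proposition~\ref{Q^h}(1) shows that the prime \straightensp preserves graded Betti numbers, and hence regularity; the codimension is preserved by the same proposition, and the degree is preserved as well: writing the Hilbert series in the form $h(t)/(1-t)^{\dim}$, the numerator $h(t)$ is determined by the graded Betti numbers together with the codimension, so $\deg = h(1)$ is invariant. Combining these invariances with Theorem~\ref{RLbasics} and writing $d_i := \deg(f_i)$, one obtains
\[
\reg(S/P) = \reg(S/I) + 2 + \sum_{i=1}^m d_i, \qquad \deg(S/P) = 2\prod_{i=1}^m (d_i+1), \qquad \h(P) = m.
\]

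Next, I would invoke the Cohen--Macaulay hypothesis on $S/I$ and apply Lemma~\ref{CMreg} to bound $\reg(S/I) \le \sum_{i=1}^m (d_i - 1) = \sum_{i=1}^m d_i - m$. Substituting into the target inequality, the Eisenbud--Goto bound reduces to
\[
\sum_{i=1}^m d_i + 1 \le \prod_{i=1}^m (d_i + 1),
\]
which follows at once from Lemma~\ref{ineq}, since $m \ge 1$.

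The only point that requires more care than routine substitution is the preservation of the degree under the prime \straightensp, which is not stated explicitly in Section~\ref{hom}; the Hilbert-series argument sketched above supplies it. Otherwise, the proof is a direct chain of substitutions using Theorem~\ref{RLbasics}, Proposition~\ref{Q^h}, Lemma~\ref{CMreg}, and Lemma~\ref{ineq}.
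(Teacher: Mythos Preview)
Your proof is correct and follows essentially the same route as the paper: compute regularity, degree, and height via Theorem~\ref{RLbasics} and Proposition~\ref{Q^h}, bound $\reg(S/I)$ by Lemma~\ref{CMreg}, and reduce to Lemma~\ref{ineq}. Two small differences are worth noting: the paper treats $m=1$ separately and for $m\ge 2$ passes through the intermediate bound $2\sum d_i$, whereas your reduction to $\sum d_i + 1 \le \prod(d_i+1)$ handles all $m\ge 1$ at once; and you explicitly justify preservation of degree under the prime standardization, a step the paper invokes Theorem~\ref{RLbasics} for without further comment.
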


\begin{proof} \sloppy Set $d_i = \deg(f_i)$ and set $\overline{T^{\homog}}$ to be the quotient of $T^\homog$ by $\depth( T^{\shomog}/\RLP(I)^{\shomog}) - 1$ general linear forms. Similarly set $\overline{\RLP(I)^{\shomog}}$ to be $\RLP(I) \overline{T^{\shomog}}$.  If $m = 1$, then $\overline{\RLP(I)^\homog}$ is a hypersurface and the claim holds.
  If $m \ge 2$, then
  \begin{align*} & \phantom{=\,\, } \reg(\overline{T^{\shomog}}/\overline{\RLP(I)^\homog}) \\
  &= \reg(T^\homog/\RLP(I)) \\
  &= \reg(S/I) + 2 + \sum_{i = 1}^m d_i & \text{ by Theorem~\ref{RLbasics}}\\
  &\le \sum_{i = 1}^m (d_i - 1) + 2 + \sum_{i = 1}^m d_i& \text{ by Lemma~\ref{CMreg}}\\
  & \le 2 \sum_{i = 1}^m d_i & \text{ since } m \ge 2\\
  & \le 2 \prod_{i = 1}^m (d_i + 1) - m & \text{ by Lemma~\ref{ineq}}  \\
    & = \deg(T^{\shomog}/\RLP(I)^\homog) - \h(\RLP(I)^\homog) & \text{ by Theorem~\ref{RLbasics}}\\
  & = \deg(\overline{T^{\shomog}}/\overline{\RLP(I)^\homog}) - \h(\overline{\RLP(I)^\homog}).
\end{align*}
\end{proof}

\section{Seminormality and Weak Normality}\label{sec:Seminormal}

Rees-like algebras are domains, hence they satisfy Serre's conditions $(R_0)$ and $(S_1)$. However, it is easy to check that they are never normal (see Proposition \ref{notnormal} below). 
When $I=(f)$ is a hypersurface,  $\RLP(I)=(y^2-zf^2)$ fails Serre's condition $(R_1)$, however it satisfies Serre's condition $(S_i)$ for all $i$. 

In contrast, we show that whenever $\h(I)>1$, the ideal $\RLP(I)$ satisfies $(R_1)$ but not $(S_2)$. First, let us recall the following equivalent form of Theorem \ref{thm:MinPrimeJacobian}.

\begin{Theorem}\label{R_bla} 
Let $k$ be a field with ${\rm char}(k) = 0$ and let $S$ be a polynomial ring over $k$.
For any nonzero, proper ideal $I \subset S$, the Rees-like algebra $\R(I)$ satisfies Serre's condition $(R_{h -1})$, where $h = \h(I)$, and does not satisfy Serre's condition $(R_h)$.
\end{Theorem}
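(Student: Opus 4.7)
The plan is to deduce the theorem as an immediate consequence of the structural description of the singular locus provided in Theorem~\ref{thm:MinPrimeJacobian}. Recall that for a Noetherian ring $R$, Serre's condition $(R_i)$ is equivalent to the statement that for every prime $\mathfrak q \subset R$ with $\h(\mathfrak q) \le i$, the localization $R_{\mathfrak q}$ is regular, or equivalently, that every irreducible component of the singular locus of $\Spec R$ has codimension at least $i+1$.

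First, I would translate the $\Proj$--level codimension statement of Theorem~\ref{thm:MinPrimeJacobian} into an affine one about $\Spec \R(I)$. By Theorem~\ref{RLbasics} we have $\h(\RLP(I)) = m = \mu(I)$, and by Theorem~\ref{thm:MinPrimeJacobian} the minimal primes of the Jacobian ideal $J$ in $T$ are exactly the primes $\mathfrak p_i + (\underline{y})$, each of height $m + h$. Hence in the quotient $\R(I) = T/\RLP(I)$, the minimal primes of $\Sing \R(I)$ correspond to height-$h$ primes, so $\codim_{\Spec \R(I)}(\Sing \R(I)) = h$.

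From this, both halves of the theorem follow directly. For the positive assertion, any prime $\mathfrak q$ of $\R(I)$ of height at most $h-1$ cannot contain a minimal prime of the singular locus, since those minimal primes all have height exactly $h$. Thus $\mathfrak q$ lies in the regular locus and $\R(I)_{\mathfrak q}$ is regular, proving $(R_{h-1})$. For the negative assertion, pick any minimal prime $\mathfrak q$ of $\Sing \R(I)$; this $\mathfrak q$ has height exactly $h$, and $\R(I)_{\mathfrak q}$ is by construction non-regular, so $(R_h)$ fails.

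There is essentially no technical obstacle to the argument beyond bookkeeping. The only subtlety worth flagging is the passage from the $\Proj$ codimension in Theorem~\ref{thm:MinPrimeJacobian} to the affine codimension in $\Spec \R(I)$ used by Serre's conditions; this is automatic here because the minimal primes of the Jacobian ideal $J$ are explicitly described and none of them equals the irrelevant ideal, so taking $\Proj$ does not drop any component. Everything else is formal.
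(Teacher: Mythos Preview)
Your proposal is correct and follows exactly the approach the paper takes: the paper simply introduces Theorem~\ref{R_bla} as ``the following equivalent form of Theorem~\ref{thm:MinPrimeJacobian}'' without further proof, and your argument spells out precisely that equivalence via the standard translation between Serre's $(R_i)$ and the codimension of the singular locus. The only small remark is that you do not even need the Proj-to-Spec passage, since Theorem~\ref{thm:MinPrimeJacobian} already records the affine height $\h(J) = \mu(I) + \h(I)$ in $T$, from which the codimension $h$ in $\Spec \R(I)$ is immediate.
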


\begin{Proposition}\label{notnormal} For any nonzero, proper ideal $I \subset S$, the Rees-like algebra $\R(I)$ is not normal.  
\end{Proposition}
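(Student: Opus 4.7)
The plan is to exhibit $t$ itself as an element of the total ring of fractions of $\R(I)$ that is integral over $\R(I)$ but fails to lie in $\R(I)$. This is a single uniform argument that does not split into cases and avoids invoking Serre's conditions (which would require treating $\h(I)=1$ and $\h(I)\ge 2$ separately via Theorem~\ref{R_bla}).

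First I would note that $t^2 \in \R(I)$ by the very definition $\R(I) = S[It, t^2]$, so $t$ satisfies the monic equation $X^2 - t^2 = 0$ with coefficients in $\R(I)$; in particular $t$ is integral over $\R(I)$. Next, since $I$ is nonzero, I can fix any nonzero element $f \in I$. Then $f \in S \subseteq \R(I)$ and $ft \in It \subseteq \R(I)$, so
\[
t \;=\; \frac{ft}{f} \;\in\; \Frac(\R(I)).
\]
Thus $t$ lies in the fraction field of $\R(I)$.

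Finally, I would argue that $t \notin \R(I)$. Viewing $\R(I)$ as an $S$-submodule of $S[t]$, it admits the explicit decomposition
\[
\R(I) \;=\; S \;\oplus\; It \;\oplus\; t^2 S \;\oplus\; It^3 \;\oplus\; t^4 S \;\oplus\; \cdots,
\]
so the coefficient of $t^1$ of any element of $\R(I)$ lies in $I$. Since $I$ is a proper ideal, $1 \notin I$, and hence $t \notin \R(I)$. Combining the three observations, $\R(I)$ is strictly contained in its integral closure inside $\Frac(\R(I))$, so $\R(I)$ is not normal.

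There is no real obstacle here; the proof is a direct verification, and the main point is simply recognizing that the witness $t$ is the natural choice because it is forced to be integral by the presence of $t^2$ as an algebra generator while being excluded from $\R(I)$ by the properness of $I$.
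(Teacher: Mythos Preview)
Your proof is correct and follows essentially the same approach as the paper: both exhibit $t$ as an element of $\Frac(\R(I))$ (the paper via $t = \frac{ft^2}{ft}$, you via $t = \frac{ft}{f}$) that satisfies $X^2 - t^2 = 0$ over $\R(I)$ but does not lie in $\R(I)$. Your justification that $t \notin \R(I)$ via the $S$-module decomposition is slightly more explicit than the paper's, which simply asserts this as clear.
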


\begin{proof}
Since $\R(I)$ is a domain, we show that $\R(I)$ is not integrally closed in its field of fractions. For any 
$0\neq f\in I$ we have $t = \frac{ft^2}{ft} \in \Frac (\R(I))$, and it follows that $\Frac \R(I) = S(t) = \Frac (S[t])$.
  Clearly $t \notin \R(I)=S[It,t^2]$ and $t$ satisfies the monic polynomial equation $X^2 - t^2 \in \R(I)[X]$. Then $\R(I)$ is not integrally closed.  
\end{proof}

\begin{Corollary}\label{S_2}
If $\h(I)>1$, then $\R(I)$ does not satisfies Serre's condition $(S_2)$.
\end{Corollary}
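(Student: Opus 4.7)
The plan is a short deduction from the two immediately preceding results together with Serre's normality criterion. Recall that Serre's criterion states that a Noetherian domain is normal if and only if it satisfies both $(R_1)$ and $(S_2)$.

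Assume $\h(I) = h > 1$. First I would invoke Theorem~\ref{R_bla} with $h \ge 2$, which yields that $\R(I)$ satisfies $(R_{h-1})$; since $h - 1 \ge 1$, the condition $(R_1)$ holds for $\R(I)$. Next, I would apply Proposition~\ref{notnormal} to conclude that $\R(I)$ is not normal. Since $\R(I)$ is a (Noetherian) domain satisfying $(R_1)$ but failing to be normal, Serre's criterion forces $(S_2)$ to fail. This gives the corollary.

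There is essentially no obstacle here; the only thing to double-check is that the hypotheses of Serre's normality criterion apply, i.e. that $\R(I)$ is Noetherian (it is a finitely generated algebra over the Noetherian ring $S$, hence Noetherian) and a domain (it is a subring of the domain $S[t]$). Both are immediate, so the argument fits in a few lines.
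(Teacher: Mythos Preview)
Your proposal is correct and is precisely the argument the paper has in mind: the corollary is stated without proof immediately after Theorem~\ref{R_bla} and Proposition~\ref{notnormal}, and the intended deduction is exactly the combination of $(R_1)$ from Theorem~\ref{R_bla}, non-normality from Proposition~\ref{notnormal}, and Serre's criterion. The only caveat is that Theorem~\ref{R_bla} carries a $\mathrm{char}(k)=0$ hypothesis, so your argument (like the paper's implicit one) inherits that restriction.
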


 We turn our attention then to alternate forms of normality, namely weak normality and seminormality. We quickly review these notions, but for a more thorough treatment, consult \cite{Vi}. 

\begin{Definition}
For a finite extension $A \subset B$ of reduced rings. A subextension $A \subset C \subset B$ is {\bf subintegral} provided it is integral, induces a bijection on spectra, and an isomorphism on residue fields at all points. It is called {\bf weakly subintegral} provided one only asks for purely inseparable extensions of residue fields. 
\end{Definition}

In any extension $A \subset A^{\textrm{N}}$ of a ring into its normalization, there is a unique largest subextension $A \subset A^{\textrm{SN}} \subset A^{\textrm{N}}$ which is subintegral and one says that $A$ is {\bf seminormal} provided that $A = A^{\textrm{SN}}$. Similarly, there is a unique largest subextension which is weakly subintegral $A \subset A^{\textrm{WN}} \subset A^{\textrm{N}}$  and we say that $A$ is {\bf weakly normal} if $A = A^{\textrm{WN}}$. Consequently all weakly normal rings are seminormal and all seminormal rings are weakly normal. 

A prototypical example of a seminormal ring which is not normal is the pinch point $k[x,xt,t^2] \cong k[x,y,z]/(y^2 - zx^2)$, where $\mathrm{char}(k) \neq 2$. This ring corresponds to the Rees-like algebra a single linear form. We show that quite often, Rees-like algebras are seminormal and weakly normal.  To do this, we exploit the following useful  criteria.

\begin{Theorem}\label{thm:criteriaSNWN} For a reduced ring $A$, 
\begin{enumerate}
\item \cite[Prop. 1.4]{LeVi} $A$ is seminormal if and only if for a  fixed pair of relatively prime integers $0 < r <s$, when $b \in A^{\textrm{N}}$ satisfies $b^r \in A$ and $b^s \in A$ then $b \in A$,
\item \cite[Thm~1]{Ya} if the characteristic of $A$ is $p > 0$, then $A$ is weakly normal if for each $b \in A^{\textrm{N}}$ such that $b^p \in A$ then $b \in A$. 
\end{enumerate}
\end{Theorem}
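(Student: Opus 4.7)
The plan is to reduce both criteria to the structural theory of elementary subintegral extensions (for part (1)) and to the Frobenius characterization of purely inseparable extensions (for part (2)), following the classical approaches of Traverso--Swan (as refined by Leahy--Vitulli) and Yanagihara, respectively.

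For the forward direction of (1), assume $A = A^{\textrm{SN}}$ and take $b \in A^{\textrm{N}}$ with $b^r, b^s \in A$. The goal is to show that $A \hookrightarrow A[b]$ is subintegral, so that $b \in A^{\textrm{SN}} = A$. Since $\gcd(r,s) = 1$, every sufficiently large integer lies in the numerical semigroup generated by $r$ and $s$, hence $b^n \in A$ for all $n \gg 0$ and $A[b]$ is a finite $A$-module. Subintegrality is then checked pointwise: the image of $b$ in any residue field $k(\mathfrak{p})$ satisfies two polynomial relations of coprime degrees, which forces the residue field extension to be trivial, and the bijection on spectra follows from integral dependence. For the reverse direction, given $b \in A^{\textrm{SN}}$, reduce to an elementary subintegral extension (where $b^2, b^3 \in A$ by Traverso's structure theorem) and propagate this canonical coprime pair to the fixed pair $(r,s)$ via suitable products; then iterate along the chain of elementary subintegral extensions realizing $A^{\textrm{SN}}$.

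For part (2), assuming the Frobenius criterion, first observe that the hypothesis implies Traverso's seminormality criterion: for any $b \in A^{\textrm{N}}$ with $b^2, b^3 \in A$, one has $b^p = (b^2)^a (b^3)^c \in A$ for nonnegative integers $a, c$ with $2a + 3c = p$, whence $b \in A$ by hypothesis. So $A$ is seminormal. Now take $b \in A^{\textrm{WN}}$; the structural theory of weak normalization in characteristic $p$ gives some $b^{p^k} \in A^{\textrm{SN}} = A$, and descending by setting $c_i = b^{p^{k-i}}$ so that $c_i^p = c_{i-1} \in A$ yields $b \in A$ via repeated application of the Frobenius hypothesis.

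The main obstacle I anticipate is the reverse direction of (1): translating the generation of $A^{\textrm{SN}}$ by elementary subintegral extensions (where the canonical coprime pair is $(2,3)$) into the criterion for an arbitrary but fixed coprime pair $(r,s)$. This change-of-pair argument is the technical heart of the Leahy--Vitulli statement; once it is in hand, part (2) is comparatively routine, since Frobenius already captures the structure of purely inseparable residue extensions.
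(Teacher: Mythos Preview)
The paper offers no proof of this statement; it is quoted directly from \cite{LeVi} and \cite{Ya} and used as a black box, so there is no in-paper argument to compare against. Your sketch follows the approach of those original sources and is essentially correct.

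Two places deserve more care. In the forward direction of (1), saying ``the bijection on spectra follows from integral dependence'' is not enough: integrality gives only surjectivity. You still need that each fiber $A[b]\otimes_A k(\mathfrak p)$ has a single prime with residue field $k(\mathfrak p)$; this holds because the image $\bar b$ is either nilpotent (if $b^r$ maps to $0$) or a unit, and in the latter case $\gcd(r,s)=1$ forces $\bar b\in k(\mathfrak p)$. In part (2), the assertion that ``the structural theory of weak normalization in characteristic $p$ gives some $b^{p^k}\in A^{\textrm{SN}}$'' is precisely the substantive step rather than something to be invoked: one argues that for $b\in A^{\textrm{WN}}$ the finite extension $A\subset A[b]$ is weakly subintegral, so the purely inseparable residue degrees are uniformly bounded by $[A[b]:A]$, and hence a single exponent $p^K$ trivializes all residue extensions, making $A\subset A[b^{p^K}]$ genuinely subintegral. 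Your reverse direction of (1) also tacitly requires $r\ge 2$ (otherwise the criterion is vacuous); once that is granted, $b^2,b^3\in A$ immediately yields $b^r,b^s\in A$ and the iteration along elementary subintegral extensions goes through.
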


For the remainder of this section, set $S = k[x_1,\ldots,x_n]$ a polynomial ring and $I$ a homogeneous ideal in $S$. Our next goal illustrates the general theme of characterizing geometric properties of the Rees-like algebra of $I$ in terms of algebraic properties of $I$. Recall the normalization of $\R(I)$ is $S[t]$.

\begin{Theorem}\label{thm:RLseminormal}
With the notation as above, the following are equivalent:
\begin{enumerate}
\item $I$ is radical, 
\item for every odd integer $\sigma>1$, for every $b \in S[t]$, if $b^{\sigma} \in \R(I)$ then $b \in \R(I)$, 
\item there are two coprime integers, $r$ and $s$ both greater than $1$ such that for every $b \in S[t]$, if $b^r \in \R(I)$ and $b^s\in \R(I)$, then $b \in \R(I)$.
\item there is an odd integer $\sigma>1$ such that for every $b \in S[t]$, if $b^{\sigma} \in \R(I)$ then $b \in \R(I)$, 
\end{enumerate} 
\end{Theorem}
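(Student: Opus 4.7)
The plan is to prove the cycle $(1) \Rightarrow (2) \Rightarrow (4) \Rightarrow (3) \Rightarrow (1)$. The implications $(2) \Rightarrow (4)$ (just pick any odd $\sigma>1$ from the hypothesis of $(2)$) and $(4) \Rightarrow (3)$ (take $r=\sigma$ together with any integer $s>1$ coprime to $\sigma$; then $b^r\in \R(I)$ alone already forces $b\in \R(I)$ by $(4)$) are immediate. So the real work is in $(1)\Rightarrow(2)$ and $(3)\Rightarrow(1)$.

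The central tool throughout is the $\ZZ/2\ZZ$-grading on $S[t]$ coming from the involution $\tau\colon t\mapsto -t$. Under this grading, $S[t]=S[t^2]\oplus S[t^2]\cdot t$ and $\R(I)=S[t^2]\oplus I[t^2]\cdot t$. So if we write $b=b_0+b_1 t$ with $b_0,b_1\in S[t^2]$, then $b\in\R(I)$ if and only if $b_1\in I[t^2]$; equivalently, writing $\overline{b}$ for the image of $b$ in $(S/I)[t]$, we have $b\in\R(I)$ iff $\overline{b}$ lies in the $\tau$-invariant subring $(S/I)[t^2]$.

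For $(1)\Rightarrow(2)$, given $b^\sigma\in \R(I)$ with $\sigma>1$ odd, the plan is to show $\overline{b}\in(S/I)[t^2]$. Since $I$ is radical, $I=\bigcap_i \p_i$ over its minimal primes, and the $\tau$-equivariant injection $(S/I)[t]\hookrightarrow \prod_i (S/\p_i)[t]$ reduces the problem to the case where $I=\p$ is prime. With $A=S/\p$ a domain, the hypothesis becomes $(\overline{b_0}+\overline{b_1}t)^\sigma=(\overline{b_0}-\overline{b_1}t)^\sigma$ in the UFD $A[t]$. Passing to $K=\Frac(A)$, unique factorization in $K[t]$ gives $\overline{b_0}+\overline{b_1}t=\zeta(\overline{b_0}-\overline{b_1}t)$ for some $\zeta\in K^\times$ with $\zeta^\sigma=1$. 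The key point is that the odd parity of $\sigma$ forbids $\zeta=-1$: the remaining alternatives $\zeta=1$ (where $\mathrm{char}(k)\neq 2$ yields $\overline{b_1}=0$) and $\zeta\neq\pm 1$ (where $1-\zeta$ and $1+\zeta$ are both nonzero in $K$, forcing $\overline{b_0}=\overline{b_1}=0$) both give $\overline{b_1}=0$, as desired.

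For $(3)\Rightarrow(1)$, I will argue by contrapositive. Assuming $I$ is not radical, pick $f\in\sqrt{I}\setminus I$ and let $m\geq 2$ be minimal with $f^m\in I$. The plan is to construct $b=f^N t$ with $b^r,b^s\in\R(I)$ but $b\notin\R(I)$. Because $\gcd(r,s)=1$ with $r,s>1$, at most one of them is even. Taking $N=\lceil m/\min(r,s)\rceil$ when both are odd (so $f^{Nr},f^{Ns}\in I$), or $N=\lceil m/s\rceil$ when $r$ is even, $s$ odd (the even power $b^r$ lies in $\R(I)$ automatically), and noting that $\min(r,s)>1$ forces $N<m$ so $f^N\notin I$, produces the desired counterexample. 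The main obstacle is $(1)\Rightarrow(2)$: the domain reduction, combined with the use of the odd parity of $\sigma$ to exclude $\zeta=-1$, is the crux. The argument would collapse for even exponents, which is precisely why $(2)$ and $(4)$ are formulated for odd $\sigma$.
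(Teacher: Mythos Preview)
Your proof is correct and takes a genuinely different route from the paper's, particularly for the key implication $(1)\Rightarrow(2)$. The paper works with the full $\ZZ$-grading on $S[t]$ by $t$-degree: it writes $b=\sum_j b_j t^{i_j}$, first strips off all terms with $b_j\in I$, and then argues by induction on the number of remaining homogeneous components that every surviving exponent $i_j$ must be even, repeatedly using that $I$ is radical to promote power-membership to membership. Your argument is more structural: the $\ZZ/2\ZZ$-grading from $t\mapsto -t$ collapses the question to whether the odd part $b_1$ lies in $I[t^2]$; the radical hypothesis $I=\bigcap_i \p_i$ then reduces to the domain case, where unique factorization in $K[t]$ (giving $p=\zeta q$ with $\zeta\in K^\times$, $\zeta^\sigma=1$, hence $\zeta\neq -1$) finishes in one stroke. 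Your approach is shorter and more conceptual; the paper's hands-on induction avoids passing to fraction fields and to the minimal primes of $I$. For the converse, the approaches again differ: the paper proves $(4)\Rightarrow(1)$ by iterating the hypothesis on $(at)^{\sigma^r}$ until it reaches $at$, whereas you prove $(3)\Rightarrow(1)$ contrapositively by manufacturing a single explicit witness $b=f^N t$ using the minimal exponent $m$ with $f^m\in I$.
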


\begin{proof}
The implications (2) $\Lra$ (3) $\Lra$ (4) are clear. We first prove (4) $\Lra$ (1). Assume $a\in S$ and $a^n \in I$ for some $n\in \ZZ_+$. If $r\in \ZZ_+$ with $\sigma^r\geq n$, then $a^{\sigma^r}\in I$. Thus $(at)^{\sigma^r}\in \R(I)$. By assumption (3) it follows that $at\in \R(I)=S[It,t^2]$, and so $a\in I$.

The theorem follows by showing that (1) $\Lra$ (2). Fix an odd integer $\sigma>1$ and assume $I$ is radical. Let $b\in S[t]$ be an element such that $b^{\sigma} \in \R(I)$, we need to show that $b\in \R(I)$. We consider the grading on $S[t]$ given by $\deg(t)=1$ and $\deg(f)=0$ for every $f\in S$. 
Write $b=\sum_{j=1}^{r}b_jt^{i_j}$ with $b_j\in S$, for integers $0\leq i_1<i_2<\ldots<i_s$ and elements $b_j\in S$.\\

\noindent {\bf Claim.} We may assume $b_j\notin I$ for any $j$.\\

To prove the claim,  observe that if $d\in I$, then $dt^k\in \R(I)=S[It,t^2]$ for every $k\geq 1$. 
Now, assume $b_j\in I$ for some $j$. Expand
\be
(b-b_jt^{i_j})^{\sigma} = b^{\sigma} + \sum_{h=1}^{\sigma} \binom{\sigma}{h}b^{\sigma - h}b_j^h
t^{hi_j}.
\ee
Since $b_j \in I$, each  $\binom{\sigma}{h}b^{\sigma - h}b_j^h \in I$ and thus $ \binom{\sigma}{h}b^{\sigma - h}b_j^h t^{hi_j}\in S[It,t^2]$ for every $1\leq h \leq \sigma$. It follows that $b^{\sigma} \in S[It,t^2]$ if and only if $(b-b_jt^{i_j})^{\sigma}\in S[It,t^2]$. We may then decompose $b=\widetilde{b} + \widetilde{c}$ where $\widetilde{b} = \sum b_jt^{i_j}$ with each $b_j\notin I$ and $\widetilde{c} = \sum b_jt^{i_j}$ with each $b_j\in I$. By the above, $b^{\sigma}\in S[It,t^2]$ if and only if $\widetilde{b}^{\sigma} \in S[It,t^2]$ and $b\in S[It,t^2]$ if and only if $\widetilde{b}\in S[It,t^2]$. By replacing $b$ by $\widetilde{b}$ we can assume $b_j\notin I$ for any $j$, proving the claim.\\

It suffices to show that each $i_j$ is even, because then $b\in S[t^2] \subset S[It, t^2]$. We proceed by induction on the number $r\geq 1$ of homogeneous components of $b$. 
If $r=1$, then $b=b_1t^{i_1}$. Assume by contradiction that $i_1$ is odd. Since $b^{\sigma}=b_1^{\sigma}t^{i_1\sigma}\in S[It,t^2]$ and $i_1\sigma$ is odd, then $b_1^{\sigma}\in I$. Since $I$ is radical, this implies $b_1\in I$, yielding a contradiction. Therefore $i_1 \in 2\ZZ$. %which proves the base case.

Next, assume $r>1$. Assume by contradiction one of the $i_j$ is odd, we let $u=\min\{j\,\mid\, i_j \text{ is odd }\}$. Observe that $b_1^{\sigma-1}b_u t^{i_1(\sigma-1)+i_u}$ is the homogeneous component of smallest odd degree of $b^{\sigma}\in S[It,t^2]$, thus it lies in $S[It,t^2]$. Since $i_1(\sigma-1)+i_u$ is odd, then 
$b_1^{\sigma-1}b_u \in I$ and so $(b_1b_u)^{\sigma-1}\in I$. Since $I$ is radical, we obtain $b_1b_u\in I$.
Now consider
\be
d=b_ub = b_1b_ut^{i_1} + b_2b_ut^{i_2} + \ldots +b_u^2t^{i_u} + \ldots
\ee
Set $e:=d-b_1b_ut^{i_1}$.  Since $b_1b_u\in I$, by the proof of the claim it follows that $e^{\sigma}\in S[It,t^2]$. By induction, it follows that $e\in S[It,t^2]$. Since $b_1b_ut^{i_1}\in S[It,t^2]$ too, then $d\in S[It,t^2]$, so every homogeneous component of $d$ lies in $S[It,t^2]$. In particular, the homogeneous component of degree $i_u$, i.e. $b_u^2t^{i_u}$ lies in $S[It,t^2]$. Since $i_u$ is odd, then $b_u^2\in I$, since $I$ is radical, $b_u\in I$ which is a contradiction.

\end{proof}

\begin{Remark} One must work with odd integers in Theorem~\ref{thm:RLseminormal}. 
If $\sigma$ is even and $b=t\in S[t]$ one has $b^2\in S[It,t^2]$ but $b\notin S[It,t^2]$.

\end{Remark}

Combining Theorems~\ref{thm:criteriaSNWN} and \ref{thm:RLseminormal}, one has the following immediate corollary.  

\begin{Corollary}\label{cor:criteriaseminormal}
Let $k$ be a field with $\rm{char}(k) \neq 2$ and let $S$ a polynomial algebra over $k$. A homogeneous ideal $I$ is radical if and only if its Rees-like algebra $\R(I)$ is seminormal which happens if and only if $\R(I)$ is weakly normal. 
\end{Corollary}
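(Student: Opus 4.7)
The plan is essentially to combine the two named theorems; the corollary is labelled ``immediate'' for good reason, and the main content has been packaged into Theorem~\ref{thm:RLseminormal}. Recall that by Proposition~\ref{notnormal} and its proof, the normalization of $\R(I)$ is $S[t]$, so the elements $b$ appearing in both Theorem~\ref{thm:criteriaSNWN} and Theorem~\ref{thm:RLseminormal} range over the same set. Throughout, I will use the general fact that $\R(I)^{\mathrm{SN}} \subseteq \R(I)^{\mathrm{WN}}$, since every subintegral extension is weakly subintegral; in particular, weak normality always implies seminormality.

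First I would establish the equivalence $I$ radical $\Longleftrightarrow \R(I)$ seminormal. Select the coprime pair $(r,s) = (2,3)$, both of which exceed $1$. Since $\mathrm{char}(k) \neq 2$, this pair is coprime in $k$ as well, and Theorem~\ref{thm:criteriaSNWN}(1) says that $\R(I)$ is seminormal if and only if every $b \in S[t]$ with $b^2, b^3 \in \R(I)$ lies in $\R(I)$. This is precisely condition (3) of Theorem~\ref{thm:RLseminormal} for the chosen pair, which in turn is equivalent to condition (1): the ideal $I$ is radical.

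Next I would handle the weakly normal equivalence. One direction, $\R(I)$ weakly normal $\Longrightarrow \R(I)$ seminormal, is the general inclusion noted above. For the converse implication $I$ radical $\Longrightarrow \R(I)$ weakly normal, I would split by characteristic. In characteristic zero there are no nontrivial purely inseparable residue field extensions, so weakly subintegral coincides with subintegral and hence weak normality coincides with seminormality; the conclusion then follows from the preceding paragraph. In characteristic $p > 2$, the prime $p$ is odd and greater than $1$, so Theorem~\ref{thm:RLseminormal}((1) $\Leftrightarrow$ (4)) applied with $\sigma = p$ yields that for every $b \in S[t]$, $b^p \in \R(I)$ implies $b \in \R(I)$. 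The criterion in Theorem~\ref{thm:criteriaSNWN}(2) then delivers weak normality of $\R(I)$.

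The only real care required lies in matching numerical hypotheses: Theorem~\ref{thm:criteriaSNWN}(1) requires a coprime pair, which is supplied by $(2,3)$ under the assumption $\mathrm{char}(k) \neq 2$, while Theorem~\ref{thm:criteriaSNWN}(2) in positive characteristic requires $p$ to be odd in order to feed into the ``odd exponent'' hypothesis of Theorem~\ref{thm:RLseminormal}(4); this is again guaranteed by $\mathrm{char}(k) \neq 2$. No further obstacle arises.
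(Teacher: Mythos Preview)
Your argument is correct and is exactly the intended one: the paper itself offers no proof beyond ``Combining Theorems~\ref{thm:criteriaSNWN} and~\ref{thm:RLseminormal}, one has the following immediate corollary,'' and you have simply unpacked that combination, including the characteristic-zero versus positive-characteristic split for weak normality. One harmless slip: the phrase ``this pair is coprime in $k$ as well'' is meaningless (coprimality of $2$ and $3$ is a statement about $\mathbb{Z}$, independent of $k$); the assumption $\mathrm{char}(k)\neq 2$ enters only to ensure $p$ is odd when invoking Theorem~\ref{thm:RLseminormal}(4) with $\sigma = p$.
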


One should notice that the analogous statement for Rees algebras does not hold. 
Indeed, the following is an example of a radical ideal $I$ whose Rees Algebra $R[It]$ is not seminormal. This example was found with the help of the Macaulay2 Seminormalization package of   Serbinowski and Schwede \cite{SS, M2}.

\begin{Example} Let $k$ be a field and $S = k[x,y,z]$. Let $\p = (y^4 - x^3z, xy^3 - z^3, x^4 - yz^2)$ be the ideal defining the monomial curve $k[v^9, v^{10}, v^{13}]$. Then \[\p = I_2\begin{pmatrix} z & -y & x \\
-y^3 & x^3 & -z^2
\end{pmatrix}.\]
By \cite[p. 309]{Vasconcelos}, $\p$ is not normal; that is, not all powers of $\p$ are integrally closed and thus the  Rees algebra $\mathcal{R}(\p) = S[\p t]$ is not a normal ring.  

Write $p_1 = y^4 - x^3z$, $p_2 = xy^3 - z^3$, and $p_3 = x^4 - yz^2$. Now set 
$$
f = \frac{x^2(p_2t)(p_3t) + z (p_1t)^2}{y} = (x^7y^2 - 3x^3y^3z^2 + x^2z^5+y^7z)t^2 \in \mathrm{Frac}(S[\p t]).
$$
Since no product of two monomial terms among the generators of $\p$ divides $x^2z^5$, it follows that $f \notin S[\p t]$.  However, we verify below that $f^2, f^3 \in S[\p t]$. Indeed,
\[f^2 = (-y p_1^3p_3 + x^2p_1p_3^3+yp_2^4-zp_2^3p_3+xzp_3^4)t^4 \in \p^4t^4 \subseteq S[\p t],\]
and
\[f^3 = (-z p_1^5p_3 + z p_1^2p_2^4  + 3xz p_1^2p_2p_3^3 + z^2 p_1p_2p_3^4 + x^3 p_2^2p_3^4) t^6 \in \p^6t^6 \subseteq S[\p t].\]
By Theorem~\ref{thm:criteriaSNWN}, we see that $S[\p t]$ is not seminormal. However, since $\p$ is prime, $\R(\p)=S[\p t,t^2]$ is seminormal  by Corollary \ref{cor:criteriaseminormal}.

\end{Example}

In positive characteristic, $F$-split rings are weakly normal, so in view of Corollary \ref{cor:criteriaseminormal} one may hope to find a fairly large class of ideals $I$ for which $\R(I)$ is $F$-split. As such from this point forward, for simplicity, we fix a perfect ground field $k$ and all rings and fields considered for the rest of this section are $F$-finite. We also identify the Frobenius map with the inclusion $S \subset S^{1/p}$ into a choice of $p$-th roots of elements of $S$ from a fixed algebraic closure.

\begin{Theorem}\label{thm:Fsplit}
Suppose ${\rm char}(k)=p > 2$ and $I$ is a radical ideal in $S$. The ring $S/I$ is $F$-split if and only if $\R(I)$ is $F$-split.
\end{Theorem}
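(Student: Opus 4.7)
The plan is to work with $F$-splittings in the form of $R$-linear maps $R^{1/p} \to R$ sending $1$ to $1$, using Fedder's criterion: $S/I$ is $F$-split exactly when there exists an $S$-linear $\phi \colon S^{1/p} \to S$ with $\phi(1)=1$ and $\phi(I^{1/p}) \subseteq I$. I would exploit the $\mathbb{N}$-grading of $\R(I)$ by $t$-degree, writing $\R(I) = \bigoplus_k M_k t^k$ with $M_k = S$ for even $k$ and $M_k = I$ for odd $k$, along with the ambient embedding $\R(I)^{1/p} \hookrightarrow S^{1/p}[t^{1/p}]$.

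For the implication $S/I$ is $F$-split $\Rightarrow$ $\R(I)$ is $F$-split, given $\phi$ as above, I would form the tensor splitting $\Phi = \phi \otimes_k \sigma \colon S[t]^{1/p} \to S[t]$, where $\sigma \colon k[t]^{1/p} \to k[t]$ is the standard splitting sending $t^{b/p}$ to itself when $p \mid b$ and to zero otherwise. This $\Phi$ is $S[t]$-linear with $\Phi(1)=1$. The nontrivial step is verifying $\Phi(\R(I)^{1/p}) \subseteq \R(I)$. Writing $\gamma = \sum_b a_b t^{b/p} \in \R(I)^{1/p}$ with $a_b \in S^{1/p}$, additivity of Frobenius gives $\gamma^p = \sum_b a_b^p t^b$, and membership of $\gamma^p$ in $\R(I)$ forces $a_b^p \in I$ for every odd $b$. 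Since $\Phi(\gamma) = \sum_k \phi(a_{pk}) t^k$ and $p$ is odd (so $pk$ and $k$ share parity), for odd $k$ we have $a_{pk} \in I^{1/p}$ and hence $\phi(a_{pk}) \in I$ by hypothesis; this is exactly what is needed to put $\Phi(\gamma)$ in $\R(I)$.

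Conversely, assume $\Phi \colon \R(I)^{1/p} \to \R(I)$ is a splitting. Let $\pi \colon \R(I) \twoheadrightarrow \R(I)_0 = S$ denote the $S$-linear projection onto the $t$-degree-$0$ summand, and set $\phi = \pi \circ \Phi|_{S^{1/p}} \colon S^{1/p} \to S$. Then $\phi$ is $S$-linear and $\phi(1) = \pi(\Phi(1)) = \pi(1) = 1$. To show $\phi(I^{1/p}) \subseteq I$, fix $a \in I$ and note that $(a^{1/p} t)^p = a t^p \in \R(I)_p$ (using $p$ odd), so $a^{1/p} t \in \R(I)^{1/p}$. Decomposing $\Phi(a^{1/p}) = \sum_k h_k$ (with $h_0 = \phi(a^{1/p})$) and $\Phi(a^{1/p} t) = \sum_k g_k$ along the $t$-grading, I would invoke $\R(I)$-linearity of $\Phi$ against the identity $bt \cdot a^{1/p} = b \cdot (a^{1/p} t)$ in $\R(I)^{1/p}$, valid for any $b \in I$ since then $bt \in \R(I)_1$, to get $bt \cdot \Phi(a^{1/p}) = b \cdot \Phi(a^{1/p} t)$. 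Comparing degree-$1$ components yields $bt \cdot h_0 = b g_1$, and since $b$ is a nonzero element of the domain $S$ (taking $b \neq 0$) and $t$ is a nonzerodivisor on $S[t]$, cancellation gives $t\phi(a^{1/p}) = g_1 \in It$, hence $\phi(a^{1/p}) \in I$.

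The main obstacle I expect is bookkeeping around parity: the whole argument relies on $p$ being odd so that multiplication by $p$ preserves parity of $t$-degrees, which is precisely where the hypothesis ${\rm char}(k) \neq 2$ is consumed in both constructions. A secondary subtlety is that in the converse the given $\Phi$ is not a priori $t$-graded, so one must isolate the relevant degree component via the $\R(I)$-linear identity above rather than simply restrict $\Phi$ to a graded piece.
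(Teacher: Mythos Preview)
Your proof is correct and the forward direction is essentially identical to the paper's: both build the splitting of $S[t]^{1/p}$ as $\phi \otimes \sigma$ and verify it restricts to $\R(I)$ via the parity argument (using $p$ odd).

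In the converse direction there is a small but genuine difference. The paper first reduces to a splitting $\psi$ that is graded of degree $0$ (a standard fact for $\mathbb{N}$-graded $F$-finite rings), and then the restriction $\psi_0 = \psi|_{S^{1/p}}$ already lands in $S$; the identity $at^2\,\psi_0(a^{1/p}) = at\,\psi(a^{1/p}t)$ and cancellation in the domain give $\psi_0(a^{1/p})t = \psi(a^{1/p}t) \in It$ directly, since $\psi$ preserves degree. You instead work with an arbitrary (ungraded) splitting $\Phi$, define $\phi$ by post-composing with the projection $\pi$ onto degree $0$, and then isolate the degree-$1$ component of the identity $bt\,\Phi(a^{1/p}) = b\,\Phi(a^{1/p}t)$ to force $t\phi(a^{1/p}) = g_1 \in It$. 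This buys you a self-contained argument that does not invoke the existence of a graded splitting, at the cost of slightly more bookkeeping with the decomposition $\Phi(a^{1/p}) = \sum_k h_k$. Either way the engine is the same: $\R(I)$-linearity against an element of $It$ together with cancellation in the domain $S[t]$.
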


\begin{proof} Assume $S/I$ is $F$-split. Every splitting of $S/I$ is induced by a splitting $\varphi \colon S^{1/p} \to S$ of $S$ with $\varphi(I^{1/p}) \subset I$. Next, we consider $\R(I) = S[It, t^2]$ as a graded subring of $S[t]$.  Define $\psi \colon S[t]^{1/p} \to S[t]$ by writing $f \in S[t]^{1/p}$ as $f = \sum a_i^{1/p} t^{i/p}$ and setting 
$$\psi(f) = \!\!\!\!\!\!\sum_{i \,\equiv\, 0 \,\,({\rm mod \,} p)} \!\!\!\!\!\!\varphi(a_i^{1/p}) t^{ \frac{i}{p}}.$$
Clearly this is $S$-linear and $\psi( t\cdot f) = t \psi( f)$ for each  $f \in S[t]^{1/p}$. Thus $\psi$ is $S[t]$-linear, whence $\R(I)$-linear. Moreover $\psi$ is surjective because $\psi(1)=1$. We show that the $\psi(\R(I)) \subseteq \R(I)$. This will show that $\psi|_{\R(I)}$ is an $F$-splitting of $\R(I)$. Let $f = \sum a_i^{1/p} t^{i/p} \in \R(I)^{1/p}$, so $a_i \in S$ for every even $i$ and $a_i \in I$ for every $i$ odd. To prove $\psi(f) \in \R(I)$ we need to show that if $\frac{i}{p}$ is an odd integer, then $ \varphi(a_i^{1/p})\in I$. This follows since $\frac{i}{p}$ being odd implies that $i$ is odd. Thus we have $a_i^{1/p} \in I^{1/p}$ and so $\varphi(a_i^{1/p}) \in I$.

Conversely, assume $\R(I)$ is $F$-split. We may assume without loss of generality that $\psi \colon \R(I)^{1/p} \to \R(I)$ is a splitting which is graded of degree $0$. Denote by $\psi_0 \colon S^{1/p} \to S$ the restriction of $\psi$ to the degree $0$ part of $\R(I)$. This is clearly $S$-linear and surjective, so it suffices to see that $\psi_0(I^{1/p}) \subset I$. By $\R(I)$-linearity, for $a \in I$ we have 
$$at^2 \psi_0( a^{1/p}) = \psi( a^{1/p} \cdot at^2) = at \psi( a^{1/p}t).$$ As $\R(I)$ is a domain, we have $ \psi_0( a^{1/p})t = \psi( a^{1/p}t)$. Since $\psi$ is graded, $\psi( a^{1/p} t) \in It$, so $\psi_0( a^{1/p})t\in It$ and then $\psi_0( a^{1/p} ) \in I$, as desired.

\end{proof}

\section{Canonical Module of a Rees-like Algebra}\label{canonical}

In this section we give an explicit computation of the canonical module of the Rees-like algebra of an arbitrary ideal in a polynomial ring. We also give an explicit free resolution of the canonical module over the presenting polynomial ring $T$.  This resolution has a surprising structure obtained by combining two Koszul complexes.  

For simplicity, we assume $k$ is a field with $\mathrm{char}(k) \neq 2$.  Let $S = k[x_1,\ldots,x_n]$ and $f_1,\ldots,f_m$ minimal generators of a homogeneous ideal $I$.  We also assume that $\h(I) \ge 2$.  Denote by $\mathcal{RLP}(f_1,\ldots,f_m)$ the Rees-like prime defined in Section 2. 
There is a distinguished complete intersection in $\mathcal{RLP}(f_1,\ldots,f_m)$, namely, 
\[C = \left(y_1^2 - zf_1^2, y_2^2 - z f_2^2,\ldots, y_m^2 - zf_m^2 \right).\] We compute the canonical module by exploiting linkage theory relative to $C$.  First we compute its primary decomposition.
  
\sloppy Recall that the choice of different minimal generating sets of $I$ give different but isomorphic Rees--like primes in the same polynomial ring $T=S[y_1,\ldots,y_m,z]$. For instance, if ${\rm char}(k)\neq 2$, then $\RLP(f_1,-f_2,f_3,\ldots,f_m)\neq \RLP(f_1,\ldots,f_m)$.
  
\begin{Lemma} \label{primdec}
With the the notation above, we have the following:
\begin{enumerate}
\item For any choice of $\mypm$ signs, $C \subset \mathcal{RLP}(\mypm f_1, \mypm f_2, \ldots, \mypm f_m)$.
\item $\mathcal{RLP}(f_1, f_2, \ldots, f_m) = \mathcal{RLP}(- f_1, - f_2, \ldots, - f_m)$.
\item If $m \ge 2$, then for any choice of $\mypm$ sign as indicated
\[\mathcal{RLP}(f_1, f_2, \ldots, f_m) \neq \mathcal{RLP}(f_1, - f_2, \mypm f_3, \mypm f_4, \ldots, \mypm f_m).\]
\item The complete intersection ideal $C$ defined above is radical and has the following primary decomposition
\[ C = \bigcap \mathcal{RLP}(f_1, \mypm f_2, \mypm f_3, \ldots, \mypm f_m),\]
where the intersection is taken over all possible choices of $\mypm$ sign.
\end{enumerate}

\end{Lemma}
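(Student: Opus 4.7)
The plan is to dispatch parts (1)--(3) by direct inspection of the defining maps of the Rees-like primes, and then to derive (4) via a faithfully flat base change that adjoins a square root of $z$, reducing the problem to a familiar factorization of a difference of squares.

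For (1), the map $\pi_{\mypm f}\colon T\to S[t]$ sending $y_i\mapsto \mypm f_i t$ and $z\mapsto t^2$ kills each $y_i^2-zf_i^2$ regardless of signs, because $(\mypm f_i t)^2=f_i^2t^2=zf_i^2$. For (2), the $k$-algebra automorphism $\sigma\colon T\to T$ defined by $\sigma(y_i)=-y_i$, $\sigma(x_j)=x_j$, $\sigma(z)=z$ satisfies $\pi_{-f}=\pi_f\circ\sigma$, so $\RLP(-f_1,\ldots,-f_m)=\sigma(\RLP(f_1,\ldots,f_m))$; but $\sigma$ sends each syzygetic generator $\sum_i c_{ij}y_i$ to its negative and fixes each $y_iy_j-zf_if_j$, so $\sigma$ stabilizes $\RLP(f_1,\ldots,f_m)$ as an ideal, yielding (2). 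For (3), note that $y_1f_2-y_2f_1$ lies in $\RLP(f_1,\ldots,f_m)$ because $(f_2,-f_1,0,\ldots,0)$ is a syzygy of $(f_1,\ldots,f_m)$; yet under any $\pi_{(f_1,-f_2,\mypm f_3,\ldots,\mypm f_m)}$ it evaluates to $2f_1f_2 t$, which is nonzero since $\mathrm{char}(k)\neq 2$ and $f_1,f_2$ are nonzero in the domain $S$.

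For (4), set $T':=T[w]/(w^2-z)$. Since $w^2-z$ is monic of degree two in $w$, the extension $T\hookrightarrow T'$ is free of rank two on $\{1,w\}$, hence faithfully flat, and $T'\cong S[y_1,\ldots,y_m,w]$ is a polynomial ring. Inside $T'$, each generator of $CT'$ factors as $y_i^2-w^2f_i^2=(y_i-wf_i)(y_i+wf_i)$. For each $\epsilon\in\{\mypm 1\}^m$, the ideal $\mathfrak{q}_\epsilon:=(y_i-\epsilon_i w f_i\mid 1\le i\le m)$ becomes a variable ideal after the linear change of coordinates $y_i\mapsto y_i-\epsilon_i w f_i$, hence is a prime of height $m$ in $T'$ with $T'/\mathfrak{q}_\epsilon\cong S[w]$, and it contains $CT'$. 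To show $CT'=\bigcap_\epsilon \mathfrak{q}_\epsilon$, I will first verify that $CT'$ is a complete intersection of height $m$ by an initial-ideal argument using the weight $\deg y_i=1$, $\deg z=0$, under which the initial forms are the regular sequence $y_1^2,\ldots,y_m^2$; consequently $CT'$ is unmixed. Next, a local check shows $CT'$ is generically reduced: at $\mathfrak{q}_\epsilon$ the factor $y_i+\epsilon_i w f_i$ is a unit, since $w$ and $f_i$ have nonzero images in the domain $T'/\mathfrak{q}_\epsilon\cong S[w]$, so $(y_i^2-w^2f_i^2)T'_{\mathfrak{q}_\epsilon}=(y_i-\epsilon_i w f_i)T'_{\mathfrak{q}_\epsilon}$. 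Being unmixed and generically reduced, $CT'$ is reduced, hence $CT'=\bigcap_\epsilon \mathfrak{q}_\epsilon$. Faithful flatness gives $CT'\cap T=C$, so $C=\bigcap_\epsilon(\mathfrak{q}_\epsilon\cap T)$. Finally, the composition $T\hookrightarrow T'\twoheadrightarrow T'/\mathfrak{q}_\epsilon\cong S[w]$ agrees (after identifying $w$ with $t$) with $\pi_{\epsilon f}$, so $\mathfrak{q}_\epsilon\cap T=\RLP(\epsilon_1f_1,\ldots,\epsilon_mf_m)$. Part (2) then collapses $\epsilon$ with $-\epsilon$, reducing the intersection to the $2^{m-1}$ sign patterns with $\epsilon_1=1$ asserted in the statement.

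The main technical hurdle is showing $CT'$ is reduced; everything else is routine bookkeeping on signs. The unmixed-plus-generically-reduced route above is attractive because it cleanly separates the height count (via an initial ideal) from the local unit calculation, bypassing any direct manipulation of primary decompositions.
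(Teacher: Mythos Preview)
Your argument is correct. Parts (1)--(3) are essentially the paper's arguments with cosmetic differences: for (2) the paper simply observes that the defining map composed with $t\mapsto -t$ has the same kernel, and for (3) the paper uses the generator $y_1y_2-zf_1f_2$ rather than your Koszul syzygy $y_1f_2-y_2f_1$, but either witness works.

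For (4) you take a genuinely different route. The paper argues numerically: it observes $C\subseteq H:=\bigcap Q_j$, notes both are unmixed of height $m$, computes $e(T/C)=2^m\prod(d_i+1)$ directly from the complete intersection and $e(T/Q_j)=2\prod(d_i+1)$ from Theorem~\ref{RLbasics}, so $e(T/H)=2^{m-1}\cdot 2\prod(d_i+1)=e(T/C)$, and concludes $C=H$ by the standard ``same height, same multiplicity'' criterion for unmixed ideals. Your approach instead adjoins $w=\sqrt{z}$, linearizes each generator as $(y_i-wf_i)(y_i+wf_i)$, and checks directly that $CT'$ is reduced (unmixed plus generically reduced) with minimal primes exactly the $\mathfrak{q}_\epsilon$. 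This is more structural and avoids invoking the multiplicity formula for $\R(I)$; it also makes transparent \emph{why} the primary components are the sign-twisted Rees-like primes, since each $\mathfrak{q}_\epsilon$ is visibly the kernel of $T'\to S[w]$, $y_i\mapsto \epsilon_i f_i w$. Two small points worth making explicit in your write-up: the claim that the $\mathfrak{q}_\epsilon$ exhaust the minimal primes of $CT'$ follows immediately from the factorization (any minimal prime must contain one factor of each product, hence some $\mathfrak{q}_\epsilon$), and your initial-ideal argument for $\h(CT')=m$ is unnecessary since $C$ is already declared a complete intersection in $T$ and regular sequences persist under flat base change.
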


\begin{proof}
(1) One simply observes that when we replace $y_i$ by $\mypm f_it$ and $z$ by $t^2$, we see that $y_i^2 - zf_i^2$ becomes $(\mypm f_it)^2 - t^2 f_i^2 = 0$.\\
(2) Let $\phi: T \to S[t]$ be the map sending $y_i \mapsto f_i t$ and $z \mapsto t^2$.  Then clearly $\mathcal{RLP}(f_1, f_2, \ldots, f_m) = \ker(\phi) = \ker(-\phi) = \mathcal{RLP}(-f_1, -f_2, \ldots, -f_m)$.\\
(3) The element $y_1y_2 - zf_1f_2$ is in the left-hand ideal but not the right-hand one.\\
(4) By (3), there are $2^{m-1}$ distinct primes in the intersection above, let us write them $Q_1,\ldots,Q_{2^{m-1}}$.  By (1), $C$ is a subset of the ideal $H=\bigcap_{j=1}^{2^{m-1}}Q_j$. 
Both $C$ and $Q$ are unmixed homogeneous ideals with the grading $\deg(x_j)=1$, $\deg(y_i)=d_i+1$ and $\deg(z)=2$. Since $y_i^2 - zf_i^2$ is homogeneous of degree $2(\deg(f_i)+1)$, we have $e(T/C)=2^mD$, where $D=\prod_{i=1}^m(d_i+1)$. By Theorem \ref{RLbasics}, $e(T/Q_i)=2D$ for every $i=1,\ldots,2^{m-1}$. Then $e(T/C)=e(T/H)=2^mD$.  Since $C\subseteq H$ are unmixed ideals of the same multiplicity and height, then $C = H$.

\end{proof}

Next, we want to obtain an explicit description of the link $L=C:\RLP(I)$, where $\RLP(I)=\mathcal{RLP}(f_1, f_2, \ldots, f_m)$. 
For $1 \le j \le m$ define the elements $g_j^{\rm{even}}$ and $g_j^{\rm{odd}}$ as follows: for a subset $S \subseteq \{1,\ldots,j\}$, let $\underline{y}^S$ denote $\prod_{i \in S} y_i$ and set $\overline{S} = \{1,\ldots,j\} - S$.  Then we define two elements of $T$:

\begin{align*}
g_j^{\rm{even}} &\coloneqq \sum_{i = 0}^{\lfloor j/2 \rfloor} \sum_{\substack{S \subseteq \{1,\ldots,j\}\\ |S| = 2i}} \underline{y}^{\overline{S}} \underline{f}^{S} z^i,\\
g_j^{\rm{odd}} & \coloneqq \sum_{i = 0}^{\lfloor (j-1)/2 \rfloor} \sum_{\substack{S \subseteq \{1,\ldots,j\}\\ |S| = 2i+1}}\underline{y}^{\overline{S}} \underline{f}^{S} z^i.\\
\end{align*}
For example, when $m = 4$ we get
\begin{align*}
g_4^{\rm{even}} &= y_1y_2y_3y_4 + y_1y_2f_3f_4z + y_1f_2y_3f_4z+\cdots+ f_1f_2y_3y_4z + f_1f_2f_3f_4z^2,\\
g_4^{\rm{odd}} &= y_1y_2y_3f_4 + y_1y_2f_3y_4 + \cdots + f_1y_2y_3y_4 + y_1f_2f_3f_4z + \cdots + f_1f_2f_3y_4z.\\
\end{align*}

The elements $g_j^{\rm{even}}$ and $g_j^{\rm{odd}}$ satisfy several useful identities, as the following Lemma shows.

\begin{Lemma}\label{identities} For $ 2 \le j \le m$, we have
\begin{align*}
g^{\rm{odd}}_j&= y_j g^{\rm{odd}}_{j-1} + f_jg^{\rm{even}}_{j-1},\\
g^{\rm{even}}_j&= y_j g^{\rm{even}}_{j-1} + zf_jg^{\rm{odd}}_{j-1},\\
y_jg^{\rm{even}}_j&= zf_jg^{\rm{odd}}_j + \left(y_j^2-zf_j^2\right) g^{\rm{even}}_{j-1},\\
f_jg^{\rm{even}}_j&= y_jg^{\rm{odd}}_j - \left(y_j^2-zf_j^2\right) g^{\rm{odd}}_{j-1}.\\
\end{align*}
\end{Lemma}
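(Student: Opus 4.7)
The plan is to prove the first two identities directly by splitting the defining sums according to whether $j$ lies in the index set $S$ or not, and then derive the last two identities as purely algebraic consequences of the first two.

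For the first identity, recall that $g_j^{\rm{odd}}$ is the sum over subsets $S \subseteq \{1,\ldots,j\}$ with $|S|$ odd. I would partition these subsets into two families: those with $j \notin S$ and those with $j \in S$. If $j \notin S$, then $S \subseteq \{1,\ldots,j-1\}$ still has odd cardinality, its complement in $\{1,\ldots,j\}$ is the complement in $\{1,\ldots,j-1\}$ together with $j$, so $\underline{y}^{\overline{S}}$ gains a factor $y_j$ while $\underline{f}^S$ and the exponent on $z$ are unchanged; these terms collectively give $y_j g_{j-1}^{\rm{odd}}$. If $j \in S$, write $S = S' \cup \{j\}$ with $S' \subseteq \{1,\ldots,j-1\}$ of even cardinality; then $\underline{y}^{\overline{S}}$ matches $\underline{y}^{\overline{S'}}$ (complement taken in $\{1,\ldots,j-1\}$), $\underline{f}^S$ gains a factor $f_j$, and the $z$-exponent stays the same (since $|S|=2i+1$ and $|S'|=2i$ both correspond to $z^i$); these give $f_j g_{j-1}^{\rm{even}}$. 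Adding the two contributions yields the first identity. The second identity is proven identically, except that when $j \in S$ the sizes move from $|S'|=2i+1$ (giving $z^i$ in $g_{j-1}^{\rm{odd}}$) to $|S|=2(i+1)$ (giving $z^{i+1}$ in $g_j^{\rm{even}}$), which is precisely the source of the extra factor $z$ in the term $zf_j g_{j-1}^{\rm{odd}}$.

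Once the first two identities are established, the last two drop out by direct substitution. For the third identity, I would compute
\[
zf_j g_j^{\rm{odd}} + (y_j^2-zf_j^2)g_{j-1}^{\rm{even}} = zf_j\bigl(y_j g_{j-1}^{\rm{odd}} + f_j g_{j-1}^{\rm{even}}\bigr) + (y_j^2-zf_j^2)g_{j-1}^{\rm{even}},
\]
and after the $zf_j^2 g_{j-1}^{\rm{even}}$ terms cancel, the right-hand side equals $y_j^2 g_{j-1}^{\rm{even}} + zy_jf_j g_{j-1}^{\rm{odd}} = y_j g_j^{\rm{even}}$ using the second identity. The fourth identity follows by the analogous manipulation: expand $y_j g_j^{\rm{odd}} - (y_j^2-zf_j^2)g_{j-1}^{\rm{odd}}$ via the first identity, cancel the $y_j^2 g_{j-1}^{\rm{odd}}$ terms, and recognize the result as $f_j g_j^{\rm{even}}$ via the second identity.

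There is no real obstacle; the first two identities are purely combinatorial bookkeeping and the last two are elementary algebra. The only place to be careful is tracking how the $z$-exponent shifts between $g_{j-1}^{\rm{odd}}$ and $g_j^{\rm{even}}$ when $j \in S$, since this shift is precisely what distinguishes the second identity from the first.
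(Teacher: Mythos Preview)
Your proposal is correct and follows essentially the same approach as the paper: the first two identities are obtained by splitting the defining sum according to whether $j$ belongs to $S$, and the last two are derived algebraically from the first two. The only cosmetic difference is that the paper expands the left-hand side of the third and fourth identities while you expand the right-hand side, but this is the same computation read in the opposite direction.
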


\begin{proof} We prove the first identity. 
\begin{align*} g^{\rm{odd}}_j&=\sum_{i = 0}^{\lfloor (j-1)/2 \rfloor} \sum_{\substack{S \subseteq \{1,\ldots,j\}\\ |S| = 2i+1}}\underline{y}^{\overline{S}} \underline{f}^{S} z^i.\\
&= \sum_{i = 0}^{\lfloor (j-1)/2 \rfloor} \sum_{\substack{S \subseteq \{1,\ldots,j-1\}\\ |S| = 2i+1}}\underline{y}^{\overline{S}} y_j \underline{f}^{S} z^i
+ \sum_{i = 0}^{\lfloor (j-1)/2 \rfloor} \sum_{\substack{S \subseteq \{1,\ldots,j-1\}\\ |S| = 2i}}\underline{y}^{\overline{S}}  \underline{f}^{S} f_j z^i\\
&=y_j g^{\rm{odd}}_{j-1} + f_jg^{\rm{even}}_{j-1}.\\
\end{align*}

The second identity is proved similarly. 
As for the third identity, we have

\begin{align*}
y_jg^{\rm{even}}_j &= y_j^2 g^{\rm{even}}_{j-1} + y_jzf_jg^{\rm{odd}}_{j-1} \\
&=  y_j^2 g^{\rm{even}}_{j-1} +zf_j\left(g^{\rm{odd}}_j - f_jg^{\rm{even}}_{j-1}\right)\\
& = \left(y_j^2-zf_j^2\right) g^{\rm{even}}_{j-1} + zf_jg^{\rm{odd}}_{j},
\end{align*}
where the first equality follows from the second identity and the middle equality from the first identity.

 The fourth identity is proved similarly.

\end{proof}

\begin{Lemma}\label{evenodd} If $Q = \mathcal{RLP}(f_1,-f_2,\mypm f_3, \mypm f_4,\ldots,\mypm f_m)$, then $g_m^{\rm{even}}, g_m^{\rm{odd}} \in Q$.
\end{Lemma}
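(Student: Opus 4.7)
The plan is to prove the containment by showing that each element maps to zero under the substitution map defining $Q$. Namely, $Q = \ker \phi$ where $\phi \colon T \to S[t]$ is given by $y_1 \mapsto f_1 t$, $y_2 \mapsto -f_2 t$, $y_j \mapsto \epsilon_j f_j t$ for $j \ge 3$ with $\epsilon_j \in \{\pm 1\}$, and $z \mapsto t^2$. So it suffices to check $\phi(g_m^{\textrm{even}}) = 0$ and $\phi(g_m^{\textrm{odd}}) = 0$. I would argue by induction on $m \ge 2$.

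For the base case $m = 2$, a direct substitution yields
$$\phi(g_2^{\textrm{even}}) = (f_1 t)(-f_2 t) + f_1 f_2 t^2 = 0, \qquad \phi(g_2^{\textrm{odd}}) = f_2 (f_1 t) + f_1(-f_2 t) = 0.$$
This is the one place where the sign discrepancy $\epsilon_1 \neq \epsilon_2$ is actually used. For the inductive step with $m \ge 3$, I would invoke the first two recursions of Lemma~\ref{identities}:
$$g_m^{\textrm{even}} = y_m g_{m-1}^{\textrm{even}} + z f_m g_{m-1}^{\textrm{odd}}, \qquad g_m^{\textrm{odd}} = y_m g_{m-1}^{\textrm{odd}} + f_m g_{m-1}^{\textrm{even}}.$$
Setting $T' = k[x_1,\ldots,x_n,y_1,\ldots,y_{m-1},z]$, the restriction of $\phi$ to $T'$ has kernel equal to $\mathcal{RLP}(f_1,-f_2,\epsilon_3 f_3,\ldots,\epsilon_{m-1}f_{m-1})$, so by the inductive hypothesis $g_{m-1}^{\textrm{even}}$ and $g_{m-1}^{\textrm{odd}}$ lie in that kernel, which is contained in $Q$. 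Since $Q$ is an ideal, the displayed recursions exhibit $g_m^{\textrm{even}}$ and $g_m^{\textrm{odd}}$ as $T$-linear combinations of elements of $Q$, completing the induction.

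There is essentially no serious obstacle: the identities of Lemma~\ref{identities} are engineered precisely to decouple the dependence on the variables $y_m,f_m,z$ from the sign pattern at indices $1$ and $2$, so only the two-variable base case needs to witness any cancellation. A completely parallel argument (that I would not include in the paper) is to observe that in $T[w]/(w^2-z)$ one has $\prod_j(y_j + w f_j) = g_m^{\textrm{even}} + w\, g_m^{\textrm{odd}}$ and $\prod_j(y_j - w f_j) = g_m^{\textrm{even}} - w\, g_m^{\textrm{odd}}$; extending $\phi$ by $w\mapsto t$, the factor at $j=1$ in the second product and the factor at $j=2$ in the first product both vanish, so both sums vanish and (using $\mathrm{char}(k)\neq 2$) the images of $g_m^{\textrm{even}}$ and $g_m^{\textrm{odd}}$ are zero.
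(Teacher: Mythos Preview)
Your proof is correct and essentially identical to the paper's: both argue by induction with the base case $j=2$ (where the sign discrepancy is used) and the inductive step via the first two identities of Lemma~\ref{identities}. The only cosmetic difference is that the paper fixes $Q$ and inducts on the index $j$ from $2$ to $m$, showing $g_j^{\rm even},g_j^{\rm odd}\in Q$ directly, whereas you induct on $m$ and pass through the smaller kernel in $T'$; these are equivalent reformulations of the same argument.
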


\begin{proof} We show for $2 \le j \le m$ that $g_j^{\rm{even}}, g_j^{\rm{odd}} \in Q$ and proceed by induction on $j$.  First note that $g^{\rm{even}}_2 = y_1y_2 + z f_1f_2 =y_1y_2-zf_1(-f_2)\in Q$ and, similarly, $g^{\rm{odd}}_2 =  y_1f_2 + y_2f_1=y_2f_1-y_1(-f_2) \in Q$, 

Now let $j > 2$ and suppose $g^{\rm{even}}_{j-1}, g^{\rm{odd}}_{j-1} \in Q$.  By Lemma~\ref{identities} $g^{\rm{even}}_j = y_j g^{\rm{even}}_{j-1} + zf_j g^{\rm{odd}}_{j-1} \in Q$ and similarly $g^{\rm{odd}}_j = y_j g^{\rm{odd}}_{j-1} + f_j g^{\rm{even}}_{j-1} \in Q$.  We are done by induction.
\end{proof}

\begin{Corollary}\label{evenoddcor} If $Q = \mathcal{RLP}(\mypm f_1, \mypm f_2,\ldots, \mypm f_m)$, then $g_m^{\rm{even}}, g_m^{\rm{odd}} \in Q$ for any choice of $\mypm$ signs except for $Q = \mathcal{RLP}( f_1,\ldots, f_m) = \mathcal{RLP}(- f_1, \ldots, - f_m)$.
\end{Corollary}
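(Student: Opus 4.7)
The plan is to adapt the induction used to prove Lemma~\ref{evenodd}, replacing the initial segments $\{1,\ldots,j\}$ by arbitrary subsets $J \subseteq \{1,\ldots,m\}$ and starting the induction at a pair of indices with opposite signs rather than at $\{1,2\}$. Since the hypothesis excludes exactly the two constant sign vectors (which by Lemma~\ref{primdec}(2) define the same ideal), a pair $i \neq j$ with $\epsilon_i \neq \epsilon_j$ always exists, and this is where the induction will begin.

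First I would introduce, for each subset $J \subseteq \{1,\ldots,m\}$, the partial polynomials $g_J^{\mathrm{even}}$ and $g_J^{\mathrm{odd}}$ defined exactly as $g_m^{\mathrm{even}}$ and $g_m^{\mathrm{odd}}$ in the text but with $S$ ranging over subsets of $J$ and the complementary $\underline{y}$-product taken over $J\setminus S$. For $J = \{1,\ldots,j\}$ these recover $g_j^{\mathrm{even}}$ and $g_j^{\mathrm{odd}}$, and for $J = \{1,\ldots,m\}$ they are the polynomials of interest. A combinatorial splitting of the defining sums according to whether a new index $k$ belongs to $S$ yields, for every $k \notin J$, the identities
\[
g_{J\cup\{k\}}^{\mathrm{even}} = y_k\, g_J^{\mathrm{even}} + zf_k\, g_J^{\mathrm{odd}}, \qquad g_{J\cup\{k\}}^{\mathrm{odd}} = y_k\, g_J^{\mathrm{odd}} + f_k\, g_J^{\mathrm{even}},
\]
which directly generalize the first two formulas of Lemma~\ref{identities}.

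For the base case, pick $i \neq j$ with $\epsilon_i \neq \epsilon_j$ and set $J_0 = \{i,j\}$. Then $g_{J_0}^{\mathrm{even}} = y_iy_j + zf_if_j$ and $g_{J_0}^{\mathrm{odd}} = y_if_j + y_jf_i$; applying the presentation map $\phi_\epsilon\colon y_\ell \mapsto \epsilon_\ell f_\ell t,\ z \mapsto t^2$ sends these to $(\epsilon_i\epsilon_j+1)f_if_j t^2$ and $(\epsilon_i+\epsilon_j)f_if_j t$, both of which vanish in characteristic different from $2$ because $\epsilon_i\epsilon_j = -1$. Hence $g_{J_0}^{\mathrm{even}}, g_{J_0}^{\mathrm{odd}} \in Q$. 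I would then induct on $|J|$, adjoining the remaining indices one at a time in any order: by the add-one-index identities above, $g_J^{\mathrm{even}}, g_J^{\mathrm{odd}} \in Q$ immediately implies $g_{J\cup\{k\}}^{\mathrm{even}}, g_{J\cup\{k\}}^{\mathrm{odd}} \in Q$ for every $k \notin J$. After $m-2$ steps we reach $J = \{1,\ldots,m\}$, yielding $g_m^{\mathrm{even}}, g_m^{\mathrm{odd}} \in Q$.

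The only step requiring real bookkeeping is the verification of the partial identities, where the parity of $|S|$ must interact correctly with the exponent of $z$ both in the case $k\in S$ and the case $k\notin S$; once those are in hand, the rest is a clean induction that parallels the proof of Lemma~\ref{evenodd}. No obstacle of substance is expected.
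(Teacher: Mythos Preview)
Your proposal is correct, but it works harder than necessary compared with the paper. The paper observes that $g_m^{\rm even}$ and $g_m^{\rm odd}$ are symmetric in the indices $1,\ldots,m$: their defining sums run over all subsets $S\subseteq\{1,\ldots,m\}$ of a given parity, so relabeling the $y_i$ and $f_i$ by any permutation leaves them invariant. Given this, the paper simply permutes the indices so that the two indices with opposite sign land at positions $1$ and $2$, and then quotes Lemma~\ref{evenodd} verbatim. Your approach instead generalizes the objects $g_j^{\rm even}, g_j^{\rm odd}$ to arbitrary index sets $g_J^{\rm even}, g_J^{\rm odd}$ and reruns the induction of Lemma~\ref{evenodd} starting from the pair $\{i,j\}$ with $\epsilon_i\neq\epsilon_j$. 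This is perfectly valid and the add-one-index identities you write down are exactly right, but it amounts to re-proving a strengthened form of Lemma~\ref{evenodd} when a one-line symmetry reduction suffices. (Incidentally, the ``characteristic different from $2$'' clause in your base case is unnecessary: $\epsilon_i\epsilon_j+1=0$ and $\epsilon_i+\epsilon_j=0$ hold in any characteristic once $\epsilon_i\neq\epsilon_j$.)
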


\begin{proof} By the symmetry of $g_m^{\rm{even}}, g_m^{\rm{odd}}$, we can assume that the signs on $f_1$ and $f_2$ are different.  Then the statement follows from Lemma~\ref{evenodd}.
\end{proof}

Our next goal is to prove that $C:Q=C+\left(g_m^{\rm{even}}, g_m^{\rm{odd}}\right)$. From now on we adopt the following notation
\begin{Notation}\label{CQL}
 $I=(f_1,\ldots,f_m)\subseteq S$, and $Q = \mathcal{RLP}(f_1,\ldots,f_m)\subseteq T$ is its Rees-like prime;
 $L \coloneqq C:Q\subseteq T$, and 
 $J \coloneqq C+\left(g_m^{\rm{even}}, g_m^{\rm{odd}}\right)\subseteq T$.
\end{Notation} 
Proving $L = J$ will require a sequence of lemmas.  First we construct two useful short exact sequences.  

\begin{Lemma}\label{SES} With Notation \ref{CQL}, we have short exact sequences
\[ 0 \to T/Q \xrightarrow{\cdot g_m^{\rm{odd}}} T/C \to T/(C+(g_m^{\rm{odd}})) \to 0,\]
and
\[0 \to T/(IT + (y_1,\ldots,y_m)) \xrightarrow{\cdot g_m^{\rm{even}}} T/(C+(g_m^{\rm{odd}})) \to T/J \to 0.\]
In particular, $Q = C:(g^{\rm{odd}}_m)$ and $IT + (y_1,\ldots,y_m) = (C + (g^{\rm{odd}}_m)):(g^{\rm{even}}_m)$.
\end{Lemma}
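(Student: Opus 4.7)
The plan is to deduce both short exact sequences from the two colon identities stated at the end. Multiplication by $g_m^{\rm{odd}}$ furnishes a $T$-linear map $T \to T/C$ with kernel $C:(g_m^{\rm{odd}})$ and cokernel $T/(C+(g_m^{\rm{odd}}))$, and similarly multiplication by $g_m^{\rm{even}}$ gives $T \to T/(C+(g_m^{\rm{odd}}))$ with kernel $(C+(g_m^{\rm{odd}})):(g_m^{\rm{even}})$ and cokernel $T/J$, since $J=C+(g_m^{\rm{odd}},g_m^{\rm{even}})$. So both sequences reduce to the two colon identities, which I would prove in turn.

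For $Q = C:(g_m^{\rm{odd}})$, I would invoke the primary decomposition from Lemma~\ref{primdec}(4): $C = Q \cap \bigcap_{\underline{\epsilon}\ne (+,\dots,+)} \mathcal{RLP}(f_1,\epsilon_2 f_2,\dots,\epsilon_m f_m)$. Corollary~\ref{evenoddcor} places $g_m^{\rm{odd}}$ in every non-$Q$ component, so those colons all become $T$. For the $Q$-component, applying the $k$-algebra map $\phi\colon T\to S[t]$ with $y_i\mapsto f_i t$ and $z\mapsto t^2$, one checks that each of the $\sum_i \binom{m}{2i+1}=2^{m-1}$ summands of $g_m^{\rm{odd}}$ maps to $f_1\cdots f_m\, t^{m-1}$, so $\phi(g_m^{\rm{odd}}) = 2^{m-1}f_1\cdots f_m\, t^{m-1}\neq 0$ as $\mathrm{char}(k)\neq 2$. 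Hence $g_m^{\rm{odd}}\notin Q$, and by primality $Q:(g_m^{\rm{odd}})=Q$; intersecting yields $C:(g_m^{\rm{odd}})=Q$.

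For $(C+(g_m^{\rm{odd}})):(g_m^{\rm{even}}) = IT+(y_1,\dots,y_m)$, the direction $\supseteq$ follows from the third and fourth identities of Lemma~\ref{identities} at $j=m$, combined with the evident symmetry of $g_m^{\rm{even}}$ and $g_m^{\rm{odd}}$ under permutations of $\{1,\dots,m\}$: for every index $j$ the same proof produces
\begin{align*}
y_j\,g_m^{\rm{even}} &= z f_j\, g_m^{\rm{odd}} + (y_j^2 - z f_j^2)\,G_j^{\rm{even}},\\
f_j\,g_m^{\rm{even}} &= y_j\, g_m^{\rm{odd}} - (y_j^2 - z f_j^2)\,G_j^{\rm{odd}},
\end{align*}
with $G_j^{\rm{even}}$ and $G_j^{\rm{odd}}$ the analogues of $g_{m-1}^{\rm{even}},g_{m-1}^{\rm{odd}}$ built over $\{1,\dots,m\}\setminus\{j\}$. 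Both right-hand sides lie in $C+(g_m^{\rm{odd}})$, so multiplication by $g_m^{\rm{even}}$ carries $IT+(y_1,\dots,y_m)$ into $C+(g_m^{\rm{odd}})$.

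The main obstacle is the reverse containment $\subseteq$, which I plan to handle by projecting to $T/IT\cong(S/I)[y_1,\dots,y_m,z]$. Under this projection $C\mapsto(y_1^2,\dots,y_m^2)$, while $g_m^{\rm{odd}}\mapsto 0$ (every summand carries at least one $f$-factor) and $g_m^{\rm{even}}\mapsto y_1\cdots y_m$ (only the $S=\emptyset$ summand survives). So $h\, g_m^{\rm{even}}\in C+(g_m^{\rm{odd}})$ forces $\overline{h}\cdot y_1\cdots y_m\in(y_1^2,\dots,y_m^2)$ inside $(S/I)[y_1,\dots,y_m,z]$. Reducing further to the ring $(S/I)[y_1,\dots,y_m,z]/(y_1^2,\dots,y_m^2)$, which is a free $(S/I)[z]$-module with basis the squarefree $y$-monomials, multiplication by the basis vector $y_1\cdots y_m$ kills every basis vector except $1$ (which it sends to $y_1\cdots y_m$). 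Hence the $y$-constant coefficient of $\overline{h}$ must vanish, so $\overline{h}\in(y_1,\dots,y_m)$ in $T/IT$, giving $h\in IT+(y_1,\dots,y_m)$ as desired.
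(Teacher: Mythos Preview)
Your proof is correct and follows essentially the same route as the paper: both reduce the two short exact sequences to the two colon identities, prove $C:(g_m^{\rm{odd}})=Q$ via the primary decomposition of $C$ and Corollary~\ref{evenoddcor}, and handle the second colon by reducing modulo $IT$ where $g_m^{\rm{odd}}\mapsto 0$, $g_m^{\rm{even}}\mapsto y_1\cdots y_m$, and $C\mapsto(y_1^2,\dots,y_m^2)$. The only cosmetic differences are that you verify $g_m^{\rm{odd}}\notin Q$ explicitly (the paper leaves this implicit), and in the final step you argue via the free $(S/I)[z]$-module structure of $(S/I)[y_1,\dots,y_m,z]/(y_1^2,\dots,y_m^2)$ whereas the paper simply invokes that $y_1,\dots,y_m$ is a regular sequence on $T/IT$; these are equivalent.
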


\begin{proof}
The first short exact sequence follows from the fact that $C:(g_m^{\rm{odd}}) = Q$ by Proposition~\ref{primdec} and Corollary~\ref{evenoddcor}.

\noindent Analogously, for the second sequence we need to show $(C + (g^{\rm{odd}}_m)):(g^{\rm{even}}_m) = IT + (y_1,\ldots,y_m)$.  First note that by Lemma~\ref{identities},
\[y_m  g^{\rm{even}}_m= g^{\rm{even}}_{m-1} (y_m^2 - z f_m^2) + z f_m \,g^{\rm{odd}}_m \in C + (g^{\rm{odd}}_m)\]
and
\[f_m g^{\rm{even}}_m= y_m \,g^{\rm{odd}}_m - g^{\rm{odd}}_{m-1} (y_m^2 - z f_m^2)  \in C + (g^{\rm{odd}}_m).\]
By symmetry, it follows that $IT + (y_1,\ldots,y_m) \subseteq (C+(g^{\rm{odd}}_m)):(g^{\rm{even}}_m)$.  Since $IT \subset (C+(g^{\rm{odd}}_m)):(g^{\rm{even}}_m)$, it suffices to consider the reverse inclusion modulo $IT$.  Let $h \in T$ be such that  $h\cdot g^{\rm{even}}_m \in (C+(g^{\rm{odd}}_m)$ modulo $IT$. Since $g^{\rm{even}}_m \equiv y_1y_2\cdots y_m$ modulo $IT$ and $(C+(g^{\rm{odd}}_m)) \equiv (y_1^2,\ldots,y_m^2)$ modulo $IT$, then $hy_1\cdots y_m \in (y_1^2,\ldots,y_m^2)$ in $T/IT$. Since $y_1,\ldots,y_m$ is a regular sequence on $T/IT$, then $h\in (y_1,\ldots,y_m)+IT$. Therefore $IT + (y_1,\ldots,y_m) = (C+(g^{\rm{odd}}_m)):(g^{\rm{even}}_m)$.

\end{proof}

Next we compute the initial ideal of $J$.

\begin{Lemma}\label{init1}
Let $<$ be the lex order $<$ on $T$ and $y_1>y_2>\cdots>y_m>z>x_1>\cdots>x_n$. Then $y_1^2-zf_1^2,\ldots,y_m^2-zf_m^2,g_m^{\rm{even}}, g_m^{\rm{odd}}$ form a Gr\"obner basis of $J$ with respect to $<$.  In particular
\be
\In_<(J) = (y_1^2,\;\ldots,\;y_m^2,\;y_1\cdots y_m, \; y_1\cdots y_{m-1}\In_<(f_m)),
\ee
and $\pd(T/\In_<(J) \leq m+1$.
\end{Lemma}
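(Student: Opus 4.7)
The plan is to identify the leading monomials of each generator, verify Buchberger's criterion for the resulting set, and then read off both the initial ideal and the projective-dimension bound from the resulting monomial ideal.

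For the leading terms, $\In_<(y_i^2 - zf_i^2) = y_i^2$ is immediate from $y_i > z$. For $g_j^{\rm{even}}$ and $g_j^{\rm{odd}}$, I would induct on $j$ via the recursions $g_j^{\rm{even}} = y_j g_{j-1}^{\rm{even}} + zf_j g_{j-1}^{\rm{odd}}$ and $g_j^{\rm{odd}} = y_j g_{j-1}^{\rm{odd}} + f_j g_{j-1}^{\rm{even}}$ of Lemma \ref{identities}. In the even case, the first summand contributes $y_1 \cdots y_j$, which lex-dominates the second summand $zf_j\cdot y_1 \cdots y_{j-2}\In_<(f_{j-1})$ since $y_j > z$ while the $y_1,\ldots,y_{j-1}$-exponents agree. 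In the odd case, the second summand contributes $y_1 \cdots y_{j-1}\In_<(f_j)$, which dominates the first $y_j\cdot y_1\cdots y_{j-2}\In_<(f_{j-1})$ because it carries $y_{j-1}$ to the first power while the first has $y_{j-1}$-exponent $0$. This yields $\In_<(g_m^{\rm{even}}) = y_1 \cdots y_m$ and $\In_<(g_m^{\rm{odd}}) = y_1 \cdots y_{m-1} \In_<(f_m)$.

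For Buchberger's criterion, the S-pairs among the $G_i := y_i^2 - zf_i^2$, and the pair $(G_m, g_m^{\rm{odd}})$, are automatic because the relevant leading monomials are pairwise coprime. The remaining nontrivial S-pairs are $(G_i, g_m^{\rm{even}})$ and $(G_i, g_m^{\rm{odd}})$ for $i < m$, together with $(g_m^{\rm{even}}, g_m^{\rm{odd}})$. For these, the identities $y_j g_j^{\rm{even}} = zf_j g_j^{\rm{odd}} + (y_j^2 - zf_j^2) g_{j-1}^{\rm{even}}$ and $f_j g_j^{\rm{even}} = y_j g_j^{\rm{odd}} - (y_j^2 - zf_j^2) g_{j-1}^{\rm{odd}}$ of Lemma \ref{identities}, applied iteratively in $j$, produce explicit standard-form reductions of each S-polynomial to zero modulo the generating set. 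This is the step I expect to be the main obstacle, since the iterative reductions must be carefully tracked through the recursive structure of the $g_j^{\rm{even}}/g_j^{\rm{odd}}$, and one must verify at each stage that the leading-monomial cancellation really is compatible with the lex order so that the division algorithm terminates at zero.

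From Buchberger's criterion, one obtains $\In_<(J) = (y_1^2, \ldots, y_m^2, y_1 \cdots y_m, y_1 \cdots y_{m-1}\In_<(f_m))$. For the projective-dimension bound, rewrite this monomial ideal as $(y_1^2, \ldots, y_m^2) + y_1 \cdots y_{m-1}(y_m, \In_<(f_m))$. The Koszul complex of the regular sequence $y_1^2, \ldots, y_m^2$ has length $m$, and the two remaining generators satisfy the linear syzygy $\In_<(f_m) \cdot (y_1 \cdots y_m) = y_m \cdot (y_1 \cdots y_{m-1}\In_<(f_m))$. In particular, the top Taylor face $\{y_1^2,\ldots,y_m^2,y_1\cdots y_m, y_1\cdots y_{m-1}\In_<(f_m)\}$ has the same lcm $y_1^2\cdots y_m^2 \In_<(f_m)$ as the sub-face obtained by deleting $y_1\cdots y_m$, so the Taylor complex of this monomial ideal contracts (via the Lyubeznik pruning) to a free resolution of length at most $m+1$, giving $\pd(T/\In_<(J)) \leq m+1$.
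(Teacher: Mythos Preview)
Your plan matches the paper's: compute the leading terms, verify Buchberger via the identities of Lemma~\ref{identities}, then bound $\pd$. Two remarks on the details.

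First, your enumeration of nontrivial S-pairs omits $(G_m,g_m^{\rm{even}})$: since $\In_<(G_m)=y_m^2$ and $\In_<(g_m^{\rm{even}})=y_1\cdots y_m$ share the factor $y_m$, this pair is not handled by coprimality. The paper in fact treats exactly this pair as its model computation, and a single application of the third identity of Lemma~\ref{identities} already gives a standard expression; symmetry in the indices then covers all $(G_i,g_m^{\rm{even}})$ and $(G_i,g_m^{\rm{odd}})$. So the ``main obstacle'' you anticipate is milder than you suggest: each S-pair reduces in one step from a single identity, not by iterating the recursion in $j$. (Also, in your leading-term induction for $g_j^{\rm{even}}$, the $y_{j-1}$-exponents of the two summands do \emph{not} agree --- the first has exponent~$1$, the second~$0$ --- so domination already occurs at $y_{j-1}$; your conclusion is still correct.)

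Second, for the projective-dimension bound the paper takes a shorter path than your Taylor-pruning argument. Since $y_1,\ldots,y_m,\In_<(f_m)$ is a regular sequence in $T$, the ring $A=k[y_1,\ldots,y_m,\In_<(f_m)]$ is a polynomial subring in $m+1$ variables and $\In_<(J)$ is extended from an $A$-ideal, whence $\pd_T(T/\In_<(J))\le m+1$ immediately. Your cancellation of the top Taylor face (its lcm equals that of the face obtained by deleting $y_1\cdots y_m$) is also correct, but the subring observation avoids the combinatorics entirely.
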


\begin{proof} For the first part of the statement we show that all $S$-pairs reduce to 0 using the basic identities from Lemma~\ref{identities}. 

Clearly $\In_<(y_i^2-zf_i^2) = y_i^2$ for all $i$.  By \cite[Proposition 2.15]{HerzogEne} the $S$-pairs $S(y_i^2 - z f_i^2, y_j^2 - z f_j^2)$  reduce to $0$ for all $1 \le i < j \leq m$.  
Since $\In_<(g_m^{\mathrm{even}}) = y_1\cdots y_m$, we see that 
\begin{align*}
S(y_m^2 - zf_m^2, g_m^{\mathrm{even}}) &= y_1\cdots y_{m-1} (y_m^2 - zf_m^2) - y_m g_m^{\mathrm{even}}\\
&= y_1\cdots y_{m-1} (y_m^2 - zf_m^2) -  (y_m^2-zf_m^2) g^{\rm{even}}_{m-1} - zf_mg^{\rm{odd}}_{m}\\
&=  (y_1\cdots y_{m-1}  -  g^{\rm{even}}_{m-1}) (y_m^2-zf_m^2) - zf_mg^{\rm{odd}}_{m}.
\end{align*}

Since the two initial terms of $(y_1\cdots y_{m-1}  -  g^{\rm{even}}_{m-1}) (y_m^2-zf_m^2)$ and $zf_mg^{\rm{odd}}_{m}$ are different (One is divisible by $y_m^2$; the other is not.),  this is a standard expression for $h$. Therefore  this $S$-pair reduces to $0$.
By symmetry, $S(y_i^2 - zf_i^2, g_m^{\rm{even}})$ also reduces to $0$ for all $i$.  A similar calculation shows that $S(y_i^2 - zf_i^2,g_m^{\rm{odd}})$ also reduces to $0$ for all $i$.  Finally,  we consider 
\begin{align*}
S(g_m^{\rm{even}},g_m^{\rm{odd}}) &= \In_<(f_m) g_m^{\rm{even}} - y_m g_m^{\rm{odd}}\\
&=  \In_<(f_m) g_m^{\rm{even}} - (f_m g_m^{\rm{even}} + g_{j-1}^{\rm{odd}} (y_m^2 - zf_m^2))\\
&= (\In_<(f_m) - f_m) g_m^{\rm{even}} - g_{j-1}^{\rm{odd}} (y_m^2 - z f_m^2),
\end{align*}
where the second equality follows from Lemma~\ref{identities}.
It is easy to see that last line is a standard expression for $S(g_m^{\rm{even}},g_m^{\rm{odd}})$ and so it also reduces to $0$.

For the second part of the statement we observe that $y_1,\ldots,y_m,a:=\In_<(f_m)$ form a regular sequence; thus $A=k[y_1,\ldots,y_m,a]$ is a polynomial ring in $m+1$ variables. By the first part of the proof, the ideal $\In_<(J)$ is extended from an $A$-ideal, and so $\pd(T/\In_<(J)) \leq m+1$.

\end{proof}

\begin{Remark}
In fact, it is not hard to show that $\pd(T/\In_<(J))=m+1$ and $\beta_{m+1}^T(T/\In_<(T/J))=1$. However, for our intended use of Lemma~\ref{init1}, the inequality $\pd(T/\In_<(J)) \leq m+1$ is sufficient -- see the proof of Proposition \ref{C:Q}.
\end{Remark}

As a step toward proving $J$ is unmixed, we next show $(y_1,y_2,\ldots,y_m,z)$ is not an associated prime of $T/J$. 

\begin{Lemma}\label{nop} 
Let $\p = (y_1,y_2,\ldots,y_m,z)$.  Then $\p \notin \ass(T/J)$.
\end{Lemma}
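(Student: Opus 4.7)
The plan is to localize $T$ by inverting $f_1,\ldots,f_m$ and exhibit an explicit non-zerodivisor in $\p$ on the resulting quotient. Since $\p = (y_1,\ldots,y_m,z)$ contains no $f_i$, it suffices to show $\p A \notin \Ass(A/JA)$ where $A := T[f_1^{-1},\ldots,f_m^{-1}]$. Setting $u_i := y_i/f_i$, we get $A = S'[u_1,\ldots,u_m,z]$ with $S' := S[f_1^{-1},\ldots,f_m^{-1}]$ and $CA = (u_i^2 - z : 1 \le i \le m)$. A direct combinatorial expansion of $g_m^{\rm{odd}}$ and $g_m^{\rm{even}}$ in the $u_i$'s, using Lemma~\ref{identities} together with $z \equiv u_1^2 \pmod{CA}$, shows that modulo $CA$
\[ g_m^{\rm{odd}} \equiv f_1\cdots f_m \cdot \alpha, \qquad g_m^{\rm{even}} \equiv f_1\cdots f_m \cdot u_1 \cdot \alpha, \]
where $\alpha := \prod_{i=2}^m (u_1 + u_i)$. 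Since each $f_i$ is a unit in $A$, $JA = CA + (\alpha)$.

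Next I would verify, as in Lemma~\ref{primdec}(4), that $A/CA$ is a reduced Cohen--Macaulay ring with minimal primes $P_\epsilon := (u_i - \epsilon_i u_1 : 2 \le i \le m,\, u_1^2 - z)$ for $\epsilon \in \{\pm 1\}^{m-1}$, each satisfying $A/P_\epsilon \cong S'[u_1]$. The element $\alpha$ vanishes on $P_\epsilon$ iff some $\epsilon_i = -1$, so its annihilator in $A/CA$ is exactly $P_+/CA$, where $P_+$ corresponds to $\epsilon = (+,\ldots,+)$. This yields the short exact sequence
\[ 0 \to A/P_+ \xrightarrow{\cdot\alpha} A/CA \to A/JA \to 0. \]

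I would then apply the snake lemma to the vertical multiplication-by-$u_1$ maps. Both $A/P_+ \cong S'[u_1]$ and $A/CA$ (reduced with $u_1 \notin P_\epsilon$ for any $\epsilon$) have $u_1$ as a non-zerodivisor, so the snake lemma produces the injection
\[ \ker(\cdot u_1 \text{ on } A/JA) \hookrightarrow \ker\bigl((A/P_+)/u_1 \xrightarrow{\cdot\alpha} (A/CA)/u_1 \bigr). \]
Now $(A/P_+)/u_1 \cong S'$, while $(A/CA)/u_1 \cong S'[u_2,\ldots,u_m]/(u_2^2,\ldots,u_m^2)$ is a free $S'$-module with squarefree monomials in $u_2,\ldots,u_m$ as basis; the induced map sends $1 \mapsto \alpha|_{u_1 = 0} = u_2 u_3 \cdots u_m$, a basis element, so it is injective. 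Thus $u_1 \in \p A$ is a non-zerodivisor on $A/JA$, no associated prime of $A/JA$ contains $\p A$, and $\p \notin \Ass(T/J)$ follows.

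The step I expect to require the most care is the combinatorial factorization of $g_m^{\rm{even}}$ and $g_m^{\rm{odd}}$ in the new variables, which ultimately reduces to the elementary expansion $\sum_{S \subseteq \{2,\ldots,m\}} u_1^{|S|} \prod_{i \notin S,\, i \ge 2} u_i = \prod_{i=2}^m (u_1 + u_i)$; everything afterwards is a formal diagram chase.
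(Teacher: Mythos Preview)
Your proof is correct and takes a different, more elementary route than the paper's. Both arguments localize so that the $f_i$ become units and then exploit essentially the same combinatorial factorization: in your variables $u_i=y_i/f_i$, the images of $g_m^{\rm odd}$ and $g_m^{\rm even}$ modulo $C$ are, up to units, $\alpha=\prod_{i\ge 2}(u_1+u_i)$ and $u_1\alpha$. The paper's proof proceeds via linkage theory: it shows $Q_\p$ is the complete intersection $(y_1-\tfrac{f_1}{f_m}y_m,\ldots,y_{m-1}-\tfrac{f_{m-1}}{f_m}y_m,\,y_m^2-zf_m^2)$, applies the Vasconcelos determinant formula to compute the link $L_\p=(C+\det\matD)_\p$, identifies $\det\matD$ (which is $\prod_{i<m}(y_i+\tfrac{f_i}{f_m}y_m)$, your $\alpha$ with index $m$ playing the role of your index~$1$) with $g_m^{\rm odd}$ modulo $C_\p$, and then invokes the Cohen--Macaulayness of links to conclude $J_\p=L_\p$ is unmixed. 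You bypass linkage entirely: from the reduced primary decomposition of $CA$ you read off $\mathrm{Ann}_{A/CA}(\alpha)=P_+/CA$, obtain the short exact sequence $0\to A/P_+\to A/CA\to A/JA\to 0$, and the snake lemma hands you the explicit non-zerodivisor $u_1\in\p A$. Your approach is more self-contained; the paper's approach yields the slightly stronger conclusion that $T_\p/J_\p$ is Cohen--Macaulay, but only the weaker statement is needed here.
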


\begin{proof} First we show that $Q_\p$ is a complete intersection.  Recall that we have a decomposition $Q = \RLP(I)_{\textrm{syz}} + \RLP(I)_{\textrm{gen}}$ as in Theorem~\ref{RLbasics}.  The ideal $\RLP(I)_{\textrm{syz}}$ is generated by elements of the form $\sum_i s_i y_i$ such that $\sum_i s_i f_i = 0$ in $S$.  In particular, the following elements corresponding to Koszul syzygies of $I$ are in $(\RLP(I)_{\textrm{syz}})_\p$: $y_1 - \frac{f_1}{f_m} y_m, y_2 - \frac{f_2}{f_m}y_m,\ldots,y_{m-1} - \frac{f_{m-1}}{f_m} y_m$.  For brevity, set $y_i' = y_i - \frac{f_i}{f_m} y_m$.   Since $y_m^2 - zf_m^2 \in \RLP(I)_{\textrm{gen}}$, it follows that $Q_\p$ is generated by the regular sequence $y_1', y_2', \ldots, y_{m-1}', y_m^2 - zf_m^2$.  (These elements, along with $y_m$, form a regular system of parameters of the regular local ring $S_\p$.)  

Now we compute the link $L_\p = C_\p:Q_\p$.  Set $\overline{y_i} = y_i + \frac{f_i}{f_m} y_m$, so that
\[y_i^2 - z f_i^2 = \overline{y_i} y_i' + \frac{f_i^2}{f_m^2}(y_m^2 - z f_m^2).\]
Therefore 
\[\begin{bmatrix} y_1^2 - z f_1^2,\ldots,y_m^2 - z f_m^2 \end{bmatrix} = \matD \begin{bmatrix} y_1',\ldots,y_{m-1}',y_m^2 - z f_m^2\end{bmatrix}^\mathsf{T},\]
where 
\[\matD = \begin{bmatrix} \overline{y_1} & 0 & \cdots & 0 & f_1^2/f_m^2 \\
0 & \overline{y_2} & \cdots & 0 & f_2^2/f_m^2 \\
0 & 0 & \ddots & 0 & \vdots \\
0 & 0 & 0 & \overline{y_{m-1}} & f_{m-1}^2/f_m^2\\
0 & 0 & 0 & 0 & 1
\end{bmatrix}.\]
By \cite[Theorem A.140]{Vasconcelos2}, $L_p = (C + (\det \matD))_\p$.  Note that
\begin{align*}
\det(\matD) &= \prod_{i = 1}^{m-1} \overline{y_i}\\ 
&= \prod_{i = 1}^{m-1} (y_i + \frac{f_i}{f_m} y_m)\\
&= \sum_{S \subseteq \{1,\ldots,m-1\}} \underline{y}^{\overline{S}} \frac{\underline{f}^S}{f_m^{|S|}} y_m^{|S|}\\
&= \sum_{i = 0}^{\lfloor (m-1)/2 \rfloor} \left( 
\sum_{{\substack{S \subseteq \{1,\ldots,m-1\}\\ |S| = 2i}}} \underline{y}^{\overline{S}} \frac{\underline{f}^S}{f_m^{2i}} y_m^{2i}
+
\sum_{{\substack{S \subseteq \{1,\ldots,m-1\}\\ |S| = 2i+1}}} \underline{y}^{\overline{S}} \frac{\underline{f}^S}{f_m^{2i+1}} y_m^{2i+1}
\right)\\
&\equiv \sum_{i = 0}^{\lfloor (m-1)/2 \rfloor} \left( 
\sum_{{\substack{S \subseteq \{1,\ldots,m-1\}\\ |S| = 2i}}} \underline{y}^{\overline{S}} \underline{f}^S z^i
+
\sum_{{\substack{S \subseteq \{1,\ldots,m-1\}\\ |S| = 2i+1}}} \underline{y}^{\overline{S}} \underline{f}^S z^i \frac{y_m}{f_m}
\right)   (\mathrm{mod} \,\,C_\p)  \\
&= g_{m-1}^{\rm{even}} + \frac{y_m}{f_m} g_{m-1}^{\rm{odd}},
\end{align*}
where the third line follows from expanding the product, the fourth line separates the even and odd terms, and the fifth line follows since $z - \frac{y_m^2}{f_m^2} \in C_\p$.  Finally note that
\[f_m \det(\matD) \equiv f_m g_{m-1}^{\rm{even}} + y_m g_{m-1}^{\rm{odd}} \equiv g_m^{\mathrm{odd}} (\mathrm{mod} \,\,C_\p) .\]
It follows that $L_\p = (C + g_m^{\mathrm{odd}})_\p$.  Since 
\[f_mg^{\rm{even}}_m=   y_mg_m^{\rm{odd}} - (y_m^2-zf_m^2) g^{\rm{odd}}_{m-1} \in C + (g_m^{\mathrm{odd}}),\]
we have
\[L_\p = (C + (g_m^{\mathrm{odd}}))_\p = J_\p.\]

Since $Q_\p$ is a complete intersection, in particular $T_\p/Q_\p$ is Cohen-Macaulay.  Since $J_\p = C_\p:Q_\p$, we have $T_\p/J_\p$ is also Cohen-Macaulay.  In particular, $J_\p$ is unmixed of height $m$; therefore $\p T_\p \notin \ass(T_\p/J_\p)$ and so $\p \notin \ass(T/J)$.
\end{proof}

We can now prove the following:

\begin{Proposition}\label{C:Q} 
With notation as above, $L = J$, i.e. $C:Q = C + (g_m^{\mathrm{odd}}, g_m^{\mathrm{even}})$.
\end{Proposition}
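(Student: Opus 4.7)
First I would establish the containment $J \subseteq L$ using the primary decomposition of $C$. By Lemma~\ref{primdec}(4), $C = Q \cap \bigcap_{Q_i \neq Q} Q_i$ is the intersection of $2^{m-1}$ distinct height-$m$ primes, so standard primary decomposition yields $L = C{:}Q = \bigcap_{Q_i \neq Q} Q_i$. Corollary~\ref{evenoddcor} places $g_m^{\mathrm{even}}, g_m^{\mathrm{odd}}$ inside every $Q_i \neq Q$, hence in $L$, and $C \subseteq L$ is trivial; together these give $J = C + (g_m^{\mathrm{even}}, g_m^{\mathrm{odd}}) \subseteq L$.

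For the reverse inclusion $L \subseteq J$, the plan is to prove that $J$ is an unmixed ideal of height $m$ with $\Min(J) = \Min(L)$ and $J_{Q_i} = L_{Q_i}$ at every minimal prime $Q_i$ of $L$; since $L$ is radical, its primary decomposition then forces $J = L$. The equality $\Min(J) = \Min(L)$ is immediate: $C \subseteq J \subseteq L$ forces $\h(J) = m$, each $Q_i \neq Q$ contains $J$ with height $m$, and any minimal prime of $J$ is contained in some minimal prime of $L$ of the same height and so equals it. The local equality at each $Q_i$ is a direct computation: since $Q$ and $Q_i$ are distinct primes of the same height, $Q \not\subseteq Q_i$, so $QT_{Q_i} = T_{Q_i}$, which forces $L_{Q_i} = (C{:}Q)_{Q_i} = C_{Q_i}$. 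On the other hand, $L$ is radical with $Q_i$ a minimal prime, so $L_{Q_i} = Q_i T_{Q_i}$. Hence $C_{Q_i} = Q_i T_{Q_i}$, and since $g_m^{\mathrm{even}}, g_m^{\mathrm{odd}} \in Q_i$, we conclude $J_{Q_i} = C_{Q_i} + (g_m^{\mathrm{even}}, g_m^{\mathrm{odd}})T_{Q_i} = Q_i T_{Q_i} = L_{Q_i}$.

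The main obstacle is verifying the unmixedness of $J$. Lemma~\ref{init1} gives $\pd_T(T/J) \le m+1$, so by Auslander-Buchsbaum every associated prime of $T/J$ has height at most $m+1$; combined with $\Min(J)$ lying in height $m$, any embedded prime of $J$ has height exactly $m+1$. Lemma~\ref{nop} has been tailored precisely to rule out $(y_1,\ldots,y_m,z)$ as such an embedded prime. To close the argument, I would show that any height $m+1$ embedded associated prime $\mathfrak{p}$ of $T/J$ must in fact equal $(y_1,\ldots,y_m,z)$: the complete intersection generators $y_i^2 - zf_i^2$ together with the $\underline y$-dominant structure of $g_m^{\mathrm{even}}, g_m^{\mathrm{odd}}$ revealed in Lemma~\ref{identities} should force each $y_i \in \mathfrak{p}$, after which the relations $y_i^2 - zf_i^2$ reduced modulo $(\underline y)$ force $z \in \mathfrak{p}$. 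Lemma~\ref{nop} then produces a contradiction, so $J$ has no embedded primes; combined with the local-at-minimal-primes analysis above, this yields $J = L$.
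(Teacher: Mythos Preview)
Your containment $J\subseteq L$ and your local analysis at the minimal primes are both fine; in fact your observation that $C_{Q_i}=Q_iT_{Q_i}$ (because $C$ is radical with $Q_i\in\Min(C)$) already sandwiches $J_{Q_i}$ between $C_{Q_i}$ and $L_{Q_i}$, which is a bit cleaner than the route the paper takes through Lemma~\ref{nop} at this step.

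The genuine gap is in your unmixedness argument. You assert that any height $m{+}1$ associated prime $\mathfrak p$ of $T/J$ ``should'' contain every $y_i$, appealing vaguely to the ``$\underline y$-dominant structure'' of $g_m^{\mathrm{even}},g_m^{\mathrm{odd}}$. This is the entire difficulty, and nothing in Lemma~\ref{identities} forces it. An embedded prime of height $m{+}1$ must contain some $Q_i=\mathcal{RLP}(f_1,\pm f_2,\ldots,\pm f_m)$, and a priori there is no reason such a prime should contain the full linear space $(y_1,\ldots,y_m)$; the generators of $Q_i$ are quadratic in the $y$'s and syzygy-type relations, not the $y_i$ themselves. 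Your subsequent step (``then $y_i^2-zf_i^2$ forces $z\in\mathfrak p$'') is correct \emph{given} $(y_1,\ldots,y_m)\subseteq\mathfrak p$, but that hypothesis is exactly what is not established.

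The paper does \emph{not} prove that every embedded prime must be $(y_1,\ldots,y_m,z)$. Instead it argues as follows: Lemma~\ref{nop} rules out only that specific prime, which (together with $\h(I)\ge 2$) is enough to guarantee that \emph{some} linear form $\ell\in k[y_1,\ldots,y_m]$ is a nonzerodivisor on $T/J$. After an automorphism one may take $\ell=y_m$; then the relation $y_m^2-zf_m^2\in J$ forces $f_m$ to be regular on $T/J$ as well, so it suffices to show $J_{f_m}$ is unmixed. In $T_{f_m}$ one has $g_m^{\mathrm{even}}\in(C+(g_m^{\mathrm{odd}}))_{f_m}$, and a direct computation of $(J+(y_m))_{f_m}$ shows it is Cohen--Macaulay; since $y_m$ is regular, $(T/J)_{f_m}$ is Cohen--Macaulay and hence unmixed. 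This localization-and-reduction step is the substantive content missing from your outline.
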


\begin{proof}
The containment $L\supseteq J$ follows from Lemma~\ref{primdec} and Corollary~\ref{evenoddcor}. Next we show $J^{un}=L$. 
Since $C=Q\cap L \subseteq J \subseteq L$ all have height $m$, and $Q,L$ are unmixed, then $C \subseteq J^{un}\subseteq L$. Since $C\subseteq J^{un}$ are unmixed of the same height, then $\Ass(T/J^{un})\subseteq \Ass(T/C)$, so, by Lemma \ref{primdec}(4), all associated primes of $T/J^{un}$ have the form $\RLP(f_1,\pm f_2,\ldots,\pm f_m)$. By Theorem \ref{RLbasics} (or the proof of Lemma \ref{nop}) they are all contained in $\p=(y_1,\ldots,y_m,z)$. Since $J_{\p}=L_{\p}$, by Lemma \ref{nop}, then $J_{Q_i}=L_{Q_i}$ for each $Q_i \in \Ass(T/J^{un})$. This proves $J^{un} = L$.

It then suffices to prove that $J$ is unmixed. We observe that for any associated prime $q$ of $T/J$ we have $\h(q)\leq m+1$, because $$\h(q)\leq \pd(T/J)\leq \pd(T/\In_<(C+(g_m^{\rm{even}}, g_m^{\rm{odd}}))) \leq m+1.$$ The first inequality follows from \cite[Lemma~2.6]{HMMS}, the second inequality follows from \cite[Theorem 22.9]{Peeva}, and the last inequality is proved in Lemma~\ref{init1}.  Therefore, we only need to prove that $J$ contains no associated primes of height $m+1$. Our next goal is to prove the following\\

\noindent{\textbf{ Claim 1.}} There exists a linear form $ \ell$ in $k[y_1,\ldots,y_m]$ that is regular on $T/J$.\\

{\textit{ Proof of Claim 1.}}  It suffices to show that no prime ideal $\p\in \Ass(T/J)$ of height $m+1$ contains $(y_1,\ldots,y_m)$. Indeed, if any such $\p$ exists, since $C\subseteq \p$, then one has $z(f_1^2,\ldots,f_m^2)\subseteq \p$; since $\h(f_1^2,\ldots,f_m^2)=\h(I)>1$, the only possibility is that $z\in \p$, and therefore $\p=(y_1,\ldots,y_m,z)$.  But this possibility is ruled out by Lemma~\ref{nop}. $\blacksquare$ \\

\noindent {\textbf{Claim 2.}} We may assume $y_m$ is regular on $T/J$. \\

{\textit{ Proof of Claim 2.}} By Claim 1 there is a linear form $0\neq \ell \in k[y_1,\ldots,y_m]$ that is regular on $T/J$.  By possibly multiplying by a unit and permuting the variables, we may assume that $\ell = y_m + \sum_{i=1}^{m-1} \alpha_i y_i$, where $\alpha_i \in k$.  We consider the automorphism $\psi$ of $T$ that fixes all variables except it sends $y_m \mapsto \ell$.  It is easy check that $\psi^{-1}(J)$ has the same generators as $J$ except that every instance of $f_m$ is replaced by $f_m + \sum_{i = 1}^{m-1} \alpha_i f_i$.  This then corresponds to choosing a different minimal set of generators of $I$ before constructing the Rees-like prime.  Since $\ell$ is not in any associated prime of $J$, $y_m$ is not in any associated prime of $\psi^{-1}(J)$. $\blacksquare$ \\
\\
We now conclude the proof of Proposition~\ref{C:Q}. Since $y_m$ is regular on $T/J$ and $y_m^2-zf_m^2\in J$, then also $f_m$ is regular on $T/J$. To prove $J$ is unmixed it then suffices to show $J_{f_m}$ is unmixed. 
Since $f_m$ is a unit in $T_{f_m}$ and $f_mg^{\rm{even}}_m= y_mg_m^{\rm{odd}} -(y_m^2-zf_m^2) g_{m-1}^{\rm{odd}} \in (C+(g_m^{\rm{odd}}))_{f_m}$, the ideal $J_{f_m}=(C+(g_m^{\rm{odd}}))_{f_m}$ is an almost complete intersection of height $m$.

Now, in $T_{f_m}$ we have
\be
\begin{array}{ll}
\phantom{push stuff} J+(y_m) 
 & =(y_1^2-zf_1^2,\ldots,y_{m-1}^2-zf_{m-1}^2, y_m^2-zf_m^2,y_m,g_m^{\rm{odd}})\\
		& = (y_1^2-zf_1^2,\ldots,y_{m-1}^2-zf_{m-1}^2, zf_m^2,y_m,g_m^{\rm{odd}})\\
(\mbox{because }f_m \mbox{ is a unit})	& = (y_1^2-zf_1^2,\ldots,y_{m-1}^2-zf_{m-1}^2, z,y_m,g_m^{\rm{odd}})\\
		& = (y_1^2,y_2^2,\ldots,y_{m-1}^2,y_m,z,g_m^{\rm{odd}})\\
(\mbox{by definition of }g_m^{\rm odd} )	& = (y_1^2,y_2^2,\ldots,y_{m-1}^2,y_m,z,y_1\cdots y_{m-1}).
\end{array}
\ee
Since $M = (y_1^2,\ldots,y_{m-1}^2,y_1y_2\cdots y_{m-1})$ is $(y_1,\ldots,y_{m-1})$-primary and extended from $k[y_1,\ldots,y_m]$, $M$ is Cohen-Macaulay of height $m-1$. Since $y_m,z$ is a regular sequence on $(T/M)_{f_m}$, the ideal $(y_1^2,y_2^2,\ldots,y_{m-1}^2,y_1\cdots y_{m-1},y_m,z)_{f_m} = (J+(y_m))_{f_m}$ is Cohen-Macaulay too. Since $y_m$ is regular on $T/J$ and $f_m$ is regular on $T/J$, $y_m$ is also regular on $(T/J)_{f_m}$, and thus $(T/J)_{f_m}$ is Cohen-Macaulay.  In particular, $J_{f_m}$ is unmixed and then so is $J$.

\end{proof}

We are now able to construct a finite $T$-free resolution of the canonical module of any Rees-like algebra $\R(I) = S[It, t^2] = T/\RLP(I)$, assuming $\mathrm{char}(k) \neq 2$.  It is built from an amalgamation of the Koszul complexes on the generators $f_1,\ldots,f_m$ of $I$ and the variables $y_1,\ldots,y_m$.

\
\begin{Theorem}\label{canonicalModule} Suppose $k$ is a field with $\mathrm{char}(k) \neq 2$.  Let $S = k[x_1,\ldots,x_n]$ and let $I = (f_1,\ldots,f_m)$ be an ideal of $S$ with $\h(I) \ge 2$.  Then the canonical module $\omega_{\R(I)}$ of the Rees-like algebra $\R(I)$ is Cohen-Macaulay.
In particular, if $\matM$ is the matrix
\[ \matM =  \left[\begin{tabular}{cccc|cccc}
$y_1$ & $y_2$ & $\cdots$ & $y_m$ & $f_1$ & $f_2$ & $\cdots$ & $f_m$ \\
\hline
$z f_1$ & $z f_2$ & $\cdots$ & $z f_m$ & $y_1$ & $y_2$ & $\cdots$ & $y_m$ 
\end{tabular}\right],\]
then the canonical module of the Rees-like algebra $\R(I)$ is
\[\omega_{\R(I)} \cong \mathrm{coker}(\matM),\]
as $T$-modules, and thus $\mathrm{type}(\R(I)) = 2$.
\end{Theorem}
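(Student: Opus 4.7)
The plan is to use classical linkage theory. The complete intersection
\[ C \;=\; (y_1^2 - z f_1^2,\; y_2^2 - z f_2^2,\; \ldots,\; y_m^2 - z f_m^2) \]
sits inside $Q = \RLP(I)$, both of height $m$, so since $T$ is Gorenstein and $C$ is a complete intersection, linkage yields
\[ \omega_{\R(I)} \;=\; \omega_{T/Q} \;\cong\; \operatorname{Hom}_{T/C}(T/Q,\,T/C) \;\cong\; (C:Q)/C \;=\; J/C \]
(up to a grading shift), where the last equality is Proposition~\ref{C:Q}. Since $J = C + (g_m^{\mathrm{even}},\,g_m^{\mathrm{odd}})$, the images $\overline{g_m^{\mathrm{even}}},\,\overline{g_m^{\mathrm{odd}}}$ of these two elements generate $\omega_{\R(I)}$, which immediately gives a surjection $T^2 \twoheadrightarrow \omega_{\R(I)}$ and the equality $\mathrm{type}(\R(I)) = 2$.

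The $2m$ relations needed for the presentation come from Lemma~\ref{identities}. Its last two identities with $j = m$ give
\[ y_m g_m^{\mathrm{even}} - z f_m g_m^{\mathrm{odd}} \in C \quad\text{and}\quad f_m g_m^{\mathrm{even}} - y_m g_m^{\mathrm{odd}} \in C, \]
and since $g_m^{\mathrm{even}}$ and $g_m^{\mathrm{odd}}$ are symmetric in the indices $\{1,\ldots,m\}$, reordering the $f_i$'s to place any chosen $f_j$ in the last position and reapplying those identities yields the analogous pair of relations for every $j$. After the innocuous sign change $\overline{g_m^{\mathrm{odd}}} \mapsto -\overline{g_m^{\mathrm{odd}}}$, these $2m$ relations are precisely the columns of $\matM$, producing a complex
\[ T^{2m} \;\xrightarrow{\matM}\; T^2 \;\twoheadrightarrow\; \omega_{\R(I)} \;\to\; 0 \]
whose image lies inside the kernel of the surjection.

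The main obstacle is simultaneously promoting this to an isomorphism $\omega_{\R(I)} \cong \mathrm{coker}(\matM)$ and establishing the small Cohen-Macaulay claim; I would handle both by constructing the full minimal $T$-free resolution of $J/C$ explicitly. The expected shape is an amalgamation of the Koszul complex on $(y_1,\ldots,y_m)$ and the Koszul complex on $(f_1,\ldots,f_m)$, coupled in alternating slots by the variable $z$, with Betti numbers $\beta_i = 2\binom{m}{i}$ and total length $m$. Because the length matches the codimension of $Q$, exactness of the explicit complex forces $\pd_T(J/C) = m$, making $J/C$ a perfect (hence Cohen-Macaulay) $T$-module of codimension~$m$, which is the small Cohen-Macaulay assertion. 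Exactness of the explicit complex I would verify by a rank count and by chasing depths through the two short exact sequences of Lemma~\ref{SES} and the linkage sequence $0 \to J/C \to T/C \to T/J \to 0$, feeding in $\depth(T/C) = n+1$, $\depth(T/Q) = \depth(S/I)+2$ from Theorem~\ref{RLbasics}, and $\depth(T/(IT+(\underline{y}))) = \depth(S/I) + 1$ to pin down $\depth_T(J/C) = n+1$. Once exactness is established, the kernel of $T^2 \twoheadrightarrow \omega_{\R(I)}$ coincides with the image of $\matM$, giving $\omega_{\R(I)} \cong \mathrm{coker}(\matM)$; the advertised self-dual shape of the resolution then drops out as a formal consequence of the palindromic Koszul structure together with the perfection of $J/C$ as a codimension-$m$ $T$-module.
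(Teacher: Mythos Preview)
Your overall architecture matches the paper's: linkage through the complete intersection $C$, identification $\omega_{\R(I)}\cong J/C$ via Proposition~\ref{C:Q}, the $2m$ relations from Lemma~\ref{identities}, and a length-$m$ complex built from two interlocking Koszul complexes. The gap is in how you propose to prove exactness.

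The depth chase through the sequences of Lemma~\ref{SES} cannot reach $\depth_T(J/C)=n+1$. From the first sequence, since $\depth(T/Q)=\depth(S/I)+2<n+1=\depth(T/C)$, the depth lemma gives exactly $\depth\bigl(T/(C+(g_m^{\mathrm{odd}}))\bigr)=\depth(S/I)+1$; the second sequence then has its first two terms of equal depth, yielding only $\depth(T/J)\ge\depth(S/I)+1$. Feeding this into the linkage sequence produces $\depth(J/C)\ge\depth(S/I)+2$, which is strictly less than $n+1$ whenever $S/I$ is not Cohen--Macaulay---precisely the case of interest. (A repair is available: Lemma~\ref{init1} gives $\pd(T/J)\le m+1$, hence $\depth(T/J)\ge n$, and then the linkage sequence does yield $\depth(J/C)\ge n+1$.) Even granting $\pd_T(J/C)=m$, a second issue remains: this does not by itself show that \emph{your} complex is a resolution of $J/C$, nor that $\ker(T^2\twoheadrightarrow J/C)=\im(\matM)$; you have only verified one containment. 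The paper handles both points independently. Acyclicity of $\mathbf{D}_\bullet$ is proved by reducing modulo $z$: the quotient $\mathbf{D}_\bullet/z\mathbf{D}_\bullet$ is an extension of two copies of the Koszul complex on $\underline{y}$, hence acyclic; the long exact sequence then makes multiplication by $z$ an isomorphism on $H_i(\mathbf{D}_\bullet)$ for $i>0$, and Nakayama's lemma kills the homology. The equality $\ker(T^2\to J/C)=\im(\matM)$ is established directly from the colon computations of Lemma~\ref{SES}: if $a\,g_m^{\mathrm{even}}-b\,g_m^{\mathrm{odd}}\in C$ then $a\in IT+(\underline{y})$, so column operations from $\matM$ reduce to $a=0$; then $b\in C:(g_m^{\mathrm{odd}})=Q$, and one checks by hand that every generator of $Q$ arises in the second coordinate of $\im(\matM)$. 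These two ingredients---the mod-$z$/Nakayama argument and the explicit kernel computation---are what your outline is missing.
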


\begin{proof} As usual let $T = S[y_1,\ldots,y_m,z]$.
Let $\mathbf{K}_\bullet(\underline{y})$ denote the Koszul complex on $y_1,\ldots,y_m$ over $T$ with differential maps $d_i^{\underline{y}}:\mathbf{K}_i(\underline{y}) \to \mathbf{K}_{i-1}(\underline{y})$, and let $\mathbf{K}_\bullet(\underline{f})$ denote the Koszul complex on $f_1,\ldots,f_m$ over $T$ with differential maps $d_i^{\underline{f}}:\mathbf{K}_i(\underline{f}) \to \mathbf{K}_{i-1}(\underline{f})$.  Define a new complex of free $T$-modules $\mathbf{D}_\bullet$ with $\mathbf{D}_i = T^{2 \binom{m}{i}}$ for $0 \le i \le m$ with differential given as a matrix by
\[\arraycolsep=1.6pt\def\arraystretch{1.6}
d_i^\mathbf{D} =  \left[\begin{tabular}{c|c} $d_i^{\underline{y}}$ & $d_i^{\underline{f}}$\\
\hline $z \!\cdot\!  d_i^{\underline{f}}$ & $d_i^{\underline{y}} $
\end{tabular}\right] .\]
It is easy to check that $d_{i-1}^\mathbf{D} \circ d_i^\mathbf{D} = 0$ and thus $\mathbf{D}_\bullet$ is a complex.  We also have the following short exact sequences of complexes
\[ 0 \to \mathbf{D}_\bullet \xrightarrow{z} \mathbf{D}_\bullet \to \mathbf{D}_\bullet/z\mathbf{D}_\bullet \to 0.\]
and
\[ 0 \to \mathbf{K}_\bullet(\underline{y}) \to  \mathbf{D}_\bullet/z\mathbf{D}_\bullet \to \mathbf{K}_\bullet(\underline{y}) \to 0.\]
Because $\mathbf{K}_\bullet(\underline{y})$ is acyclic, it follows from the long exact sequence of homology associated to the second short exact sequence that $ \mathbf{D}_\bullet/z\mathbf{D}_\bullet$ is also acyclic.  Now from the long exact sequence associated to the first short exact sequence we see that multiplication by $z$ induces an isomorphism on $H_i( \mathbf{D}_\bullet)$ for $i > 0$;  then by Nakayama's Lemma we get $H_i( \mathbf{D}_\bullet ) = 0$ for $i > 0$.  Note that $d_1^\mathbf{D} = \matM$.  

Now define $d_0^\mathbf{D}: \mathbf{D}_0 \to \frac{C:Q}{C}$ as follows.  By Proposition~\ref{C:Q}, $\frac{C:Q}{C}$ is minimally generated by $g_m^{\rm{even}}$ and $-g_m^{\rm{odd}}$.  Since $\mathbf{D}_0 = T^2$, we map the first basis element to $g_m^{\rm{even}}$ and the second basis element to $-g_m^{\rm{odd}}$.  By Lemma~\ref{identities}, we have
\[y_m g^{\rm{even}}_m + z f_m (-g^{\rm{odd}}_m) = g^{\rm{even}}_{m-1} \left(y_m^2 - z f_m^2\right)  \in C \]
and
\[f_m g^{\rm{even}}_m + y_m (-g^{\rm{odd}}_m) = - g^{\rm{odd}}_{m-1} \left(y_m^2 - z f_m^2\right)  \in C.\]
Therefore $\im(d_1^\mathbf{D}) = \im(\matM) \subseteq \ker(d_0^\mathbf{D})$.  To show the reverse inclusion, suppose that $a,b \in T$ such that $d_0^\mathbf{D}[a,b]^\mathsf{T} = 0 \in \frac{C:Q}{C}$; that is, 
\[a \cdot g^{\rm{even}}_m + b (-g^{\rm{odd}}_m) \in C.\]
Then by Lemma~\ref{SES}, $a \in (C + (g^{\rm{odd}}_m)):(g^{\rm{even}}_m) = IT + (y_1,\ldots,y_m)$.  Since the entries in the first row of $\matM$ generate $IT + (y_1,\ldots,y_m)$, we can use the columns of $\matM$ to rewrite $a$ and $b$ and we may assume that $a = 0$.  But then $b \in C:(g^{\rm{odd}}_m) = Q$.  By Theorem~\ref{RLbasics}, every element of $Q$ is a linear combination of the elements $y_iy_j - zf_if_j$, where $1 \le i \le j \le m$ and $\sum_j c_j y_j$, where $\sum_j c_j f_j = 0$.  Note that
\[
\begin{bmatrix} 0 \\ y_i y_j - z f_i f_j  \end{bmatrix} = y_j \begin{bmatrix} f_i  \\ y_i \end{bmatrix} - f_i \begin{bmatrix} y_j \\ z f_j \end{bmatrix} \in \im (d_1^\mathbf{D}),\]
and
\[
\begin{bmatrix} 0 \\ \sum_j c_j y_j  \end{bmatrix} = \sum_j c_j \begin{bmatrix} f_j  \\ y_j \end{bmatrix}  \in \im (d_1^\mathbf{D}),\]
where $\sum_j c_j f_j = 0$.  Therefore $[0,b]^\mathsf{T} \in \im (d_1^\mathbf{D})$, for any $b \in Q$.  It follows that $\im(d_1^\mathbf{D}) =  \ker(d_0^\mathbf{D})$, and that $\mathbf{D}_\bullet$ is a minimal $T$-free resolution of $\frac{C:Q}{C}$.  Finally, we have $\omega_{\R(I)} \cong \frac{C:Q}{C}$, e.g. by \cite[Lemma 3.1]{HMMS}.

\end{proof}

In retrospect, perhaps the fact that the canonical module is Cohen-Macaulay should not be surprising since the integral closure of $S[It, t^2]$ is a polynomial ring, and thus a finite Cohen-Macaulay module over the non-Cohen-Macaulay Rees-like algebra.  Yet, we find the self-dual nature of the $T$-free resolution of the canonical module in the previous theorem surprising.  The authors plan to study more generally non-Cohen-Macaulay rings whose canonical modules are self dual in a future paper. 

As a corollary, we get the following surprising self-duality statement:

\begin{Corollary} Using the notation above, 
\[\omega_{\R(I)} \cong \ext^m_{_T}(T/Q,T) \cong\ext^m_{_T}(\omega_{\R(I)},T).\]

\end{Corollary}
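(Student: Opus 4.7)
The first isomorphism is a standard consequence of $T$ being Gorenstein: since $Q$ has height $m$, the module $\ext^m_T(T/Q, T)$ agrees, up to a graded twist, with the canonical module of $T/Q = \R(I)$. Alternatively, one can combine the linkage duality $(C:Q)/C \cong \ext^m_T(T/Q, T)$, available because $C \subseteq Q$ is a complete intersection of height $m$, with the identification $\omega_{\R(I)} \cong (C:Q)/C$ from Theorem~\ref{canonicalModule}.

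For the second isomorphism, the plan is to exploit the evident self-duality of the resolution $\comD_\bullet$ built in the proof of Theorem~\ref{canonicalModule}. Recall that $\comD_i = T^{2\binom{m}{i}}$ with differential the block matrix
\[
d_i^{\comD} = \begin{pmatrix} d_i^{\underline{y}} & d_i^{\underline{f}} \\ z\,d_i^{\underline{f}} & d_i^{\underline{y}} \end{pmatrix}.
\]
Each Koszul complex is canonically self-dual: the exterior-algebra pairing identifies $\Hom_T(\mathbf{K}_i(\underline{y}), T)$ with $\mathbf{K}_{m-i}(\underline{y})$ in such a way that $(d_i^{\underline{y}})^T$ corresponds, up to sign, to $d_{m-i+1}^{\underline{y}}$, and analogously for $\underline{f}$. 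Applying $\Hom_T(-,T)$ to $\comD_\bullet$, making these identifications, and then conjugating by the involution that swaps the two summands in each $\comD_j = T^{\binom{m}{j}} \oplus T^{\binom{m}{j}}$, the differential at position $i$ of the dualized complex transforms into $\pm\, d_{m-i+1}^{\comD}$. In other words, $\Hom_T(\comD_\bullet, T)$ is isomorphic, as a complex of free $T$-modules, to $\comD_\bullet$ with its indexing reversed.

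To conclude, by Theorem~\ref{canonicalModule} the module $\omega_{\R(I)}$ is Cohen-Macaulay of dimension $n+1$ over the $(n+m+1)$-dimensional polynomial ring $T$, so the Auslander-Buchsbaum formula gives $\pd_T(\omega_{\R(I)}) = m$; the grade of $\omega_{\R(I)}$ is also $m$, since $Q$ annihilates $\omega_{\R(I)}$ and $\h(Q) = m$. Therefore $\ext^i_T(\omega_{\R(I)},T) = 0$ for all $i \neq m$, and the dualized complex $\Hom_T(\comD_\bullet,T)$ is concentrated in cohomological degree $m$ with cohomology $\ext^m_T(\omega_{\R(I)},T)$. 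Combining with the self-duality of the previous paragraph yields $\ext^m_T(\omega_{\R(I)},T) \cong H_0(\comD_\bullet) = \omega_{\R(I)}$, completing the proof. The main subtle point I foresee is the careful bookkeeping of signs in the Koszul self-dualities and verifying the compatibility of the block-swap conjugation across all homological degrees; this is routine but error-prone.
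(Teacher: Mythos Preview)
Your argument is correct and follows the same route as the paper: the paper's proof is a single sentence observing that the self-duality of the Koszul complexes $\mathbf{K}_\bullet(\underline{y})$ and $\mathbf{K}_\bullet(\underline{f})$ forces $\comD_\bullet \cong \mathrm{Hom}_T(\comD_\bullet,T)$, and you have simply unpacked this with the explicit block-swap conjugation. Your final paragraph on the vanishing of the other $\ext^i$ is correct but unnecessary, since the self-duality already identifies $H^m$ of the dual complex with $H_0(\comD_\bullet)$ directly.
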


\begin{proof} Because $\mathbf{K}_\bullet(\underline{y})$ and $\mathbf{K}_\bullet(\underline{f})$ are self-dual, it follows from the definition that $\mathbf{D}_\bullet$ is self-dual as well, i.e. $\mathbf{D}_\bullet \cong \mathrm{Hom}_{_T}(\mathbf{D}_\bullet,T)$.

\end{proof}

\begin{Example}
Let $S = k[x_1,x_2]$ and set $I = (x_1,x_2)^2$. We construct the resolution of the canonical module of the Rees-like algebra $\R(I)$. As such, set $T = S[y_1,y_2,y_3,z]$ and let $Q = \RLP(x_1^2,x_1x_2,x_2^2)$.  By the previous theorem, $\omega_{\R(I)} \cong \frac{C:Q}{C}$, where $C = (y_1^2 - zx_1^4, y_2^2-zx_1^2x_2^2, y_3^2-zx_2^4)$ and $$C:Q = C + (g_3^{\mathrm{odd}}, g_3^{\mathrm{even}}),$$ where
\[g_3^{\mathrm{even}} = y_1y_2y_3 + x_1x_2^3y_1z + x_1^3x_2y_2z + x_1^3x_2y_3z,\]
\[g_3^{\mathrm{odd}} = x_2^2y_1y_2 + x_1x_2y_1y_3 +x_1^2y_2y_3 + x_1^3x_2^3z.\]
Moreover, as a $T$-module, $\omega_{\R(I)}$ has $T$-free resolution:
\[
\xymatrix{
T^2 & \ar[l]_{d_1} T^6 & \ar[l]_{d_2} T^6 & \ar[l]_{d_3} T^2 & \ar[l] 0,
}\]
where
\begin{align*}
d_1 &=  \left[\begin{tabular}{ccc|ccc}
${y}_{1}$&
      ${y}_{2}$&
      ${y}_{3}$&
      $ {x}_{1}^2$&
      $ x_1{x}_{2}$&
      $ {x}_{2}^2$\\
      \hline
      $z{{x}_{1}^2}$&
      $zx_1{{x}_{2}}$&
      $z{{x}_{2}^2}$&
      ${{y}_{1}}$&
      ${{y}_{2}}$&
      ${{y}_{3}}$\\
      \end{tabular}\right]\\
      d_2 &= \left[\begin{tabular}{ccc|ccc}
      ${-{y}_{2}}$&
      ${-{y}_{3}}$&
      $0$&
      ${- x_1{x}_{2}}$&
      ${- {x}_{2}^2}$&
      $0$\\
      $\phantom{-}{y}_{1}$&
      $0$&
      ${-{y}_{3}}$&
      $\phantom{-} {x}_{1}^2$&
      $0$&
      ${- {x}_{2}^2}$\\
      $0$&
      $\phantom{-}{y}_{1}$&
      $\phantom{-}{y}_{2}$&
      $0$&
      $\phantom{-} {x}_{1}^2$&
      $\phantom{-} x_1{x}_{2}$\\
      \hline
      ${-zx_1{x}_{2}}$&
      ${-z{x}_{2}^2}$&
      $0$&
      ${-{y}_{2}}$&
      ${-{y}_{3}}$&
      $0$\\
      $\phantom{-}z{x}_{1}^2$&
      $0$&
      ${-z{x}_{2}^2}$&
      $\phantom{-}{y}_{1}$&
      $0$&
      ${-{y}_{3}}$\\
      $0$&
      $\phantom{-}z{x}_{1}^2$&
      $\phantom{-}zx_1{x}_{2}$&
      $0$&
      $\phantom{-}{y}_{1}$&
      $\phantom{-}{y}_{2}$\\
     \end{tabular}\right]\\
      d_3 &=  \left[\begin{tabular}{c|c}
      ${-{y}_{3}}$&
      $- {x}_{2}^2$\\
      $\phantom{-}{y}_{2}$&
     $\phantom{-} { x_1{x}_{2}}$\\
      ${-{y}_{1}}$&
      $- {x}_{1}^2$\\
      \hline
      ${-z{x}_{2}^2}$&
      $-{y}_{3}$\\
     $\phantom{-} zx_1{x}_{2}$&
     $\phantom{-} {{y}_{2}}$\\
      ${-z{x}_{1}^2}$&
      $-{y}_{1}$\\
\end{tabular}\right].
\end{align*}
\end{Example}

\section*{Acknowledgements}

 The second author was supported by a grant from the Simons Foundation (576107, JGM).


\begin{thebibliography}{AAAA}

\bibitem{AH} T. Ananyan and M. Hochster, \emph{Small subalgebras of polynomial rings and Stillman's conjecture}, arXiv:1610.09268v1. 

\bibitem{BE} D. Buchsbaum and D. Eisenbud, \emph{What makes a complex exact?}, J. Algebra, {\bf 25}, (1973), 259--268.

\bibitem{BE74} D. Buchsbaum and D. Eisenbud, \emph{Some structure theorems for finite free resolutions}, Adv. Math., {\bf 12}, (1974), 84--139.

\bibitem{E} D. Eisenbud, \emph{Commutative Algebra with a view toward Algebraic Geometry}, Graduate Texts in Math., {\bf  150}, Springer-Verlag, Berlin and New York, 1995.

 \bibitem{EG} D. Eisenbud and S. Goto, \emph{Linear free resolutions and minimal multiplicity},  J. Algebra, {\bf  88}, (1984),  89--133.

\bibitem{EHU}D. Eisenbud, C. Huneke, and B. Ulrich, \emph{The regularity of Tor and graded Betti numbers}, Amer. J. Math., {\bf 128}, (2006), 573--605.


 \bibitem{Fl}  H. Flenner, \emph{Die Satze von Bertini fur lokale Ringe}, Math. Ann., {\bf 229},  (1977), 97--111. 


\bibitem{GLP}   L. Gruson, R. Lazarsfeld, and C. Peskine, \emph{On a theorem of Castelnuovo and the equations defining projective varieties}, Invent. Math., {\bf 72}, (1983), 491--506.


\bibitem{HerzogEne} V. Ene and J. Herzog, \emph{Gr\"obner bases in commutative algebra}, Graduate Studies in Mathematics, {\bf 130}. American Mathematical Society, Providence, RI, 2012.

\bibitem{HMNU} C. Huneke, J. C. Migliore, U. Nagel, and B. Ulrich, \emph{Minimal homogenous liaison and licci ideals}, Cont. Math., {\bf 448}, (2007), 129--139.


\bibitem{HMMS} C. Huneke, P. Mantero, J. McCullough and A. Seceleanu, \emph{Multiple structures with arbitrarily large projective dimension supported on linear subspaces}, J. Algebra, {\bf 447}, (2016), 183--205.

  \bibitem{KP}  S. Kwak and  J. Park, \emph{A bound for Castelnuovo-Mumford regularity by double point divisors},
arXiv: 1406.7404v1. 


\bibitem{Laz} R. Lazarsfeld, \emph{A sharp Castelnuovo bound for smooth surfaces}, Duke Math. J. {\bf 55}, (1987), 423--438.

\bibitem{LeVi} J. V. Leahy and M. A. Vitulli, \emph{ Seminormal rings and weakly normal varieties}, Nagoya Math. J.,
{\bf 82}, (1981), 27--56.

\bibitem{M} J. McCullough, \emph{On the Maximal Graded Shifts of Ideals and Modules}, to appear in J. Algebra.

\bibitem{MP} J. McCullough and I. Peeva, \emph{Counterexamples to the Eisenbud--Goto regularity conjecture},  J. Amer. Math. Soc. {\bf 31} (2018), no. 2, 473--496.

\bibitem{Niu} W. Niu, \emph{Castelnuovo–Mumford regularity bounds for singular surfaces}, Math.e Zeit., {\bf 280}, 3-4, 609--620.

\bibitem{M2} D. Grayson and M. Stillman, \emph{Macaulay2, a software system for research in algebraic geometry},
        Available at http://www.math.uiuc.edu/Macaulay2/.

\bibitem{Noma} A. Noma, \emph{Generic inner projections of projective varieties and an application to the positivity of double point divisors},  Trans. Amer. Math. Soc., {\bf 366}, (2014),  4603--4623.

\bibitem{Peeva} I. Peeva, \emph{Graded Syzygies}, Algebra and Applications, {\bf 14}. Springer-Verlag London, Ltd., London, 2011.
   
\bibitem{P} H. Pinkham, \emph{A Castelnuovo bound for smooth surfaces}, Invent. Math., {\bf 83}, (1986), 321--332.


\bibitem{Ran} Z. Ran, \emph{Local differential geometry and generic projections of threefolds}, J. Differential Geom., {\bf 32}, (1990), 13--137.

\bibitem{SS} B. Serbinowski and K. Schwede, \emph{Seminormalization: a package for computing seminormalization of rings}, Version~0.1,  
        Available at https://github.com/Macaulay2/M2/tree/master/M2/Macaulay2/packages.
      

\bibitem{Vasconcelos} W. Vasconcelos, \emph{On linear complete intersections}, J. Algebra, {\bf 111}, (1987), no. 2, 306--315. 

\bibitem{Vasconcelos2} W. Vasconcelos, \emph{Computational methods in commutative algebra and algebraic geometry},
 Algorithms and Computation in Mathematics, {\bf 2}. Springer-Verlag, Berlin, 1998.

\bibitem{Vi} M. A. Vitulli, \emph{Weak normality and seminormality}, Commutative algebra -- Noetherian and non-Noetherian perspectives, 441-- 480, Springer, New York, 2011.

\bibitem{Ya} H. Yanagihara, \emph{Some results on weakly normal ring extensions},  J. Math. Soc. Japan, {\bf 35}, (4),
(1983), 649--661.




\end{thebibliography}
\end{document}